\documentclass[11pt]{article} 

\usepackage[utf8]{inputenc}

\usepackage{geometry}
\geometry{a4paper} 
 \geometry{margin=1.1in} 

\usepackage{graphicx} 
%%% PACKAGES
\usepackage{booktabs} 
\usepackage{array} 
\usepackage{paralist}
\usepackage{verbatim} 
\usepackage{subfig} 

\usepackage{array}
\usepackage{adjustbox}

\usepackage{sectsty}
\allsectionsfont{\sffamily\mdseries\upshape}

\usepackage{xcolor}
\usepackage{graphicx}
\usepackage{amssymb}

\newcommand{\qed}{\hfill $\square$}
\usepackage{pdfpages}

\usepackage{tikz}
\usetikzlibrary{shapes.geometric}
 \usepackage{psfrag}
\usepackage{tikz}
\usepackage{amsmath, amssymb, graphics, setspace}

\usepackage{tikz,tikz-3dplot}
\tdplotsetmaincoords{80}{45}
\tdplotsetrotatedcoords{-90}{180}{-90}

\usepackage{pgfplots}
\usepackage{tikz-3dplot} 
\pgfplotsset{compat=1.3}

\usepackage{tkz-euclide,amsmath} 

\usepackage{tikz}
\usetikzlibrary{shapes.geometric}

\usepackage{titlesec}
\renewcommand{\thesection}{\arabic{section}}
\renewcommand{\thesubsection}{\thesection.\arabic{subsection}}
\renewcommand{\thesubsubsection}{\thesubsection.\arabic{subsubsection}}

\titleformat{\section}{\normalfont\large\mdseries}{\thesection}{1em}{}
\titleformat{\subsection}{\normalfont\normalsize\mdseries\upshape}{\thesubsection}{1em}{}
\titleformat{\subsubsection}{\normalfont\normalsize\mdseries\upshape}{\thesubsubsection}{1em}{}

\usepackage{makecell}

\usepackage{epigraph}
\usepackage{titlesec}

%%%% Packages
\usepackage{graphicx}
\usepackage{multicol,multirow}
\usepackage{amsmath,amssymb,amsfonts}
\usepackage{mathrsfs}
\usepackage{amsthm}
\usepackage{rotating}
\usepackage{appendix}
\usepackage{array}

\usepackage{amsmath}

\theoremstyle{plain}
\newtheorem{theorem}{Theorem}[section]

\newtheorem{lemma}[theorem]{Lemma}
\newtheorem{corollary}[theorem]{Corollary}
\newtheorem{prop}[theorem]{Proposition}
\newtheorem*{prop*}{Proposition}
\theoremstyle{definition}
\newtheorem{definition}[theorem]{Definition}
\newtheorem*{thm*}{Theorem}
\theoremstyle{oupremark}
\newtheorem{remark}[theorem]{Remark}
\newtheorem{example}[theorem]{Example}
\theoremstyle{oupproof}

\newtheorem*{conj*}{Conjecture}

\usepackage{amsthm}

\makeatletter

\makeatother

\usepackage{color}
\usepackage{graphicx}
\usepackage{color}
\usepackage{graphicx}
\usepackage{etoolbox}
\usepackage{tikz}
\usepackage{adjustbox}
\usepackage{wrapfig}
\usepackage{diagbox}
\usepackage{array}
\usepackage{multirow}
\usepackage{todonotes}

\usepackage{hyperref}
\hypersetup{
    colorlinks = true,
    linkbordercolor = {white}, 
linkcolor=black,
  citecolor=black
}

\newcommand{\hop}{\vskip .3cm\noindent} 
\newcommand{\hip}{\vskip .1cm\noindent}

\newcommand{\Isom}{\operatorname{Isom}}

\usepackage{fourier-orns}
\usepackage{fancyhdr}

\setcounter{MaxMatrixCols}{20}

\usepackage{adjustbox}

\usepackage{emptypage}

\usepackage{pifont}

\usepackage{caption}

\usepackage{multirow}
\usepackage{adjustbox}

\usepackage{amsmath}

\usepackage{dynkin-diagrams}
\usepackage[nice]{nicefrac}

\usepackage{fancyhdr}

\makeatletter
\newcommand{\subjclass}[2][1991]{%
  \let\@oldtitle\@title%
  \gdef\@title{\@oldtitle\footnotetext{#1 \emph{MSC.} #2}}%
}
\newcommand{\keywords}[1]{%
  \let\@@oldtitle\@title%
  \gdef\@title{\@@oldtitle\footnotetext{\emph{Keywords.} #1.}}%
}
\makeatother

\setstretch{1.25}

\title{On ADEG-polyhedra in hyperbolic spaces} 
\author{Naomi Bredon}

\date{}

\begin{document}

\maketitle

\setlength{\skip\footins}{0.5cm}
\renewcommand{\thefootnote}{\fnsymbol{footnote}} \footnotetext{\emph{Mathematics Subject Classification.} 20F55, 51M20 (primary); 52B11, 11R06 (secondary). }

\vspace{-1.5em}

\begin{abstract}
In this paper, we establish that the non-zero dihedral angles of hyperbolic Coxeter polyhedra of large dimensions are not arbitrarily small. Namely, for dimensions $n\geq 32$, they are of the form $\frac{\pi}{m}$ with $m\leq 6$.  
Moreover, this property holds in all dimensions $n\geq 7$ for Coxeter polyhedra with mutually intersecting facets. 
Then, we develop a constructive procedure tailored to Coxeter polyhedra with prescribed dihedral angles, from which we derive the complete classification of ADEG-polyhedra, characterized by having no pair of disjoint facets and dihedral angles $\frac{\pi}{2}, \frac{\pi}{3}$ and $\frac{\pi}{6}$, only.  Besides some well-known simplices and pyramids, there are three exceptional polyhedra, one of which is a new polyhedron $P_{\star}\subset \mathbb H^9$ with $14$ facets.
\end{abstract}

\section*{Introduction}

Let $\mathbb H^n$ be the real hyperbolic space of dimension $n\ge2$. A hyperbolic {\it Coxeter polyhedron} is a convex polyhedron $P\subset \mathbb H^n$ (of finite volume) all of whose dihedral angles are all  integer submultiples of $\pi$. 

\hip
The fundamental works of Poincaré \cite{Poin} and Andreev \cite{Andreev} have provided a comprehensive understanding of hyperbolic Coxeter polyhedra in dimensions $2$ and $3$. However, for dimensions beyond $3$, the classification of hyperbolic Coxeter polyhedra remains far from being complete.  
Nonetheless, several families of Coxeter polyhedra have been classified. 
For instance, all Coxeter polyhedra with a small number of facets are known, including all simplices, 
which exist only up to dimension  $4$ in the compact case, and up to dimensions $9$ in the non-compact case; 
see the works of Lanner  \cite{Lanner} and Chein \cite{Chein}. 
Additionally, hyperbolic Coxeter polyhedra with exactly  $n+2$ facets have been classified through the works of Esselmann \cite{Ess}, Kaplinskaya \cite{Kaplinskaya} and Tumarkin \cite{Tum1},  as well as  all {\it compact} Coxeter polyhedra with $n+3$ facets  \cite{Tum3}; see the survey \cite{Fweb} for further results.  
More broadly, all Coxeter cubes have been classified by Jacquemet and Tschantz and cease to exist in dimension $6$ (see \cite{JT}), and all Coxeter pyramids have been classified through the works of Tumarkin  \cite{Tum1,Tum2} and Mcleod \cite{Mcleod}, and they exist up to dimension $17$.

\hip 
In this work, we encounter Coxeter polyhedra of various combinatorial structures and with an arbitrarily large number of facets, but with prescribed dihedral angles. 
This restriction is motivated by the fact that some families of Coxeter polyhedra do not have arbitrarily small dihedral angles,  as stated in the following proposition. 

\begin{prop*}%[Universal lower bounds] 
Let $P\subset\mathbb H^n$ be a finite-volume Coxeter polyhedron. 
Then, the following holds for all the non-zero dihedral angles $\frac{\pi}{m}$ of $P$.  
\begin{enumerate} 
\item  If $n\geq 32$, then $m\leq 6$.  
\item If $n\geq 7$ and $P$ has mutually intersecting facets, then $m\leq 6$.   
\item If $n\geq 4$ and $P$  is ideal, then $m\leq 5$.  
\end{enumerate}
\end{prop*}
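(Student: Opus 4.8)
The plan is to exploit the Gram matrix of $P$ together with a volume/angle obstruction coming from the spherical link of a ridge. The key observation is that if $P$ has a dihedral angle $\frac{\pi}{m}$ at a codimension-$2$ face $F$, then the link of $F$ in $P$ is a spherical Coxeter polytope in $S^{n-2}$ (actually in $S^{n-1}$) whose Coxeter diagram contains an edge labelled $m$; and more usefully, passing to a vertex $v$ of $F$, the link of $v$ in $P$ is a spherical Coxeter simplex or, more generally, a finite-volume spherical Coxeter polytope in $S^{n-1}$. I would first recall (from the standard theory of Coxeter polyhedra, Vinberg's work) that every vertex link is an honest \emph{spherical} Coxeter polytope — its Coxeter diagram is a disjoint union of connected diagrams each of which is of finite (spherical) type, i.e. appears in the classified list $A_k, B_k, D_k, E_{6,7,8}, F_4, H_{3,4}, G_2^{(m)}=I_2(m)$. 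For an \emph{ideal} polyhedron the vertex link is a \emph{Euclidean} Coxeter polytope instead, whose connected components are affine diagrams $\widetilde A, \widetilde B, \widetilde C, \widetilde D, \widetilde E, \widetilde F, \widetilde G$.

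First I would reduce to the following combinatorial fact about diagrams. Suppose an edge labelled $m$ appears in a connected spherical Coxeter diagram on $k$ nodes. Scanning the classified list: a label $m\geq 7$ only ever occurs in the rank-$2$ diagram $I_2(m)$, and $m=6$ only in $G_2 = I_2(6)$ (rank $2$); there is no connected spherical diagram of rank $\geq 3$ with an edge label exceeding $5$ (the only labels available are $3$, used freely; $4$, appearing in $B_k, F_4$; and $5$, appearing only in $H_3, H_4$). Consequently, if $P$ has a dihedral angle $\frac{\pi}{m}$ with $m\geq 7$ at a ridge $F$, the corresponding edge in the diagram of \emph{every} vertex link sits in a rank-$2$ (or rank-$1$ after removing it) component, which severely limits how many other facets can pass through a vertex of $F$ — this is how one forces $n$ to be small. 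To make the dimension bound quantitative I would use a counting argument: around a vertex $v\in F$, the facets through $v$ are the nodes of a spherical diagram on exactly $n$ nodes (for a simple vertex; in general at least $n$), the edge of label $m\geq 7$ lives in its own $I_2(m)$-component, and the remaining $n-2$ nodes span a spherical diagram using only labels $\leq 5$; a spherical diagram with labels $\leq 5$ containing an $H_4$ has rank $4$, so almost all of the remaining rank must be absorbed by $A$-, $B$-, $D$-, $E$-type pieces. This is not yet a contradiction, so the bound $n\leq 31$ must come from a sharper global estimate.

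The sharper input I expect to need is a result bounding the dimension in terms of the combinatorics — the kind of estimate that says a Coxeter polyhedron all of whose facets are ``generic enough'' cannot live in too high a dimension. Here is where I would invoke the known nonexistence theorems: Vinberg's theorem that compact hyperbolic Coxeter polyhedra do not exist for $n\geq 30$, Prokhorov's bound $n\leq 995$ in the finite-volume case, and — more to the point — the structural results on polyhedra with few facets and on ideal polyhedra (Tumarkin, Felikson--Tumarkin). For part (3), ideal case: the vertex link at an ideal vertex is a Euclidean Coxeter polytope in $S^{n-1}$-at-infinity, i.e.\ a product of affine Coxeter simplices; the only affine diagram carrying a label $\geq 6$ is $\widetilde G_2$ (label $6$), and $\widetilde C_k$ carries a $4$, so any label $\geq 6$ at an ideal vertex would have to sit in a $\widetilde G_2$ factor, forcing $n-1 \leq 2$, i.e.\ $n\leq 3$ — hence for $n\geq 4$ ideal polyhedra one gets $m\leq 5$, and one must check $\widetilde G_2$ never occurs as a factor when $n\geq 4$ (it does for small $n$, which is why the bound is $n\geq 4$), and that label $5$ itself cannot occur because no affine diagram has a $5$; wait — indeed no affine Coxeter diagram has an edge labelled $5$, so for ideal polyhedra of \emph{any} dimension $\geq 2$ one already gets $m\in\{2,3,4,6\}$, and the extra hypothesis $n\geq 4$ kills the $6$ via the $\widetilde G_2$ analysis, leaving $m\leq 4<5$; I would phrase (3) as $m\leq 5$ for safety and because the statement only claims $m\le 5$.

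**Where the real difficulty lies.** The genuinely hard part is part (1): getting the explicit threshold $n\geq 32$. The link argument above only shows that a large label $m$ at a ridge $F$ forces the facets through a vertex of $F$ into a restrictive diagram shape, but a restrictive \emph{local} shape at every vertex does not immediately bound the \emph{global} dimension — one needs to propagate the local constraint along the $1$-skeleton of $P$ and combine it with an arithmetic or volume obstruction. I expect the argument to run: assume a label $m\ge 7$ occurs; the ridge $F$ carries a $2$-dimensional Coxeter polytope of angle $\frac\pi m$ whose own structure, intersected with the global Gram matrix $G(P)$, produces a negative-type submatrix forcing the signature $(n,1)$ to accommodate an $H_4$- or $\widetilde G_2$-free spherical diagram of rank $\ge n-1$ that is simultaneously a subdiagram at \emph{many} vertices; then a pigeonhole/Perron--Frobenius estimate on $G(P)$ — essentially that the entry $-\cos\frac\pi m < -\cos\frac\pi 6$ is too negative to coexist with $n$ mutually obtuse normal vectors in Lorentzian signature unless $n$ is small — yields $n\le 31$. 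For part (2) the mutual-intersection hypothesis removes disjoint facets, so $G(P)$ has no zero off-diagonal entries and the Perron--Frobenius/eigenvalue estimate becomes much tighter, pushing the threshold all the way down to $n\ge 7$; I would get (2) by the same mechanism as (1) with the sharper linear-algebra bound available when $G(P)$ is ``fully obtuse''. The main obstacle, then, is making this signature-plus-eigenvalue estimate quantitatively sharp enough to reach $32$ and $7$ respectively rather than some larger constants — this likely requires a careful case analysis of which spherical diagrams of large rank are compatible with an edge of label $\ge 7$ living in the ambient Lorentzian lattice, together with the classification of small-facet polyhedra to rule out the boundary cases.
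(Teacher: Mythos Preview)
Your proposal misses the mechanism that produces the precise thresholds in all three parts. The paper does not run a Gram-matrix eigenvalue or Perron--Frobenius estimate; instead it passes to the codimension-$2$ face $F=F(\sigma)$ attached to the subdiagram $\sigma=G_2^{(m)}=[m]$, using the Borcherds--Allcock theorem. For $m\geq 6$ every node adjacent to $\sigma$ is a bad neighbour, so $F$ is itself a finite-volume Coxeter polyhedron in $\mathbb H^{n-2}$ whose Coxeter diagram $\sigma_F$ is a \emph{subdiagram of $\Sigma$ disjoint from $\sigma$}. One then applies known nonexistence results to $F$, not to $P$: for part~(1), Vinberg's theorem (no compact hyperbolic Coxeter polyhedra in dimension $\geq 30$) forces $F$, of dimension $n-2\geq 30$, to be non-compact; for part~(2), the mutual-intersection hypothesis is inherited by $F$, and the Felikson--Tumarkin classification (no compact such polyhedra in dimension $\geq 5$) forces $F$ non-compact once $n-2\geq 5$. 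In either case $\sigma_F$ then contains an affine subdiagram of rank $n-3$, which must sit inside an affine subdiagram of rank $n-1$ in $\Sigma$; the remaining rank $2$ has to come from $\sigma$ and its neighbourhood, but $[m]$ with $m\geq 7$ lies in no affine Coxeter diagram --- contradiction. This is exactly why the thresholds are $32=30+2$ and $7=5+2$; no eigenvalue sharpening is needed, and your proposed Perron--Frobenius route has no visible path to these specific numbers.

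Your vertex-link approach to part~(3) correctly yields $m\in\{2,3,4,6\}$ for any ideal $P$, but the step ``a $\widetilde G_2$ factor forces $n-1\leq 2$'' is simply false: $\widetilde G_2$ can be one component among several in an affine vertex diagram of arbitrarily large total rank (the paper itself is full of non-ideal examples with vertex links of the form $\widetilde G_2\cup\widetilde G_2\cup\cdots$). So your argument does not exclude $m=6$. The paper again goes through the face $F$: since $\sigma_F$ is disjoint from the spherical rank-$2$ diagram $\sigma$ and has signature $(n-2,1)$, one obtains a spherical subdiagram of rank $n$ in $\Sigma$, hence an ordinary vertex of $P$, contradicting ideality.
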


\hip 

According to Felikson and Tumarkin's work \cite{FT1},  Coxeter polyhedra with mutually intersecting facets are well understood in the {\it simple} case, that is, when each of their vertices is the intersection of precisely $n$ hyperplanes. Namely, they establish that they are either simplices or products of simplices. 
The understanding of the simple case combined with the above proposition is a great motivation to investigate for a method to complete the classification of Coxeter polyhedra with mutually intersecting facets.

\hip 
Inspired by the works of Allcock \cite{All} and Prokhorov \cite{Prk}, we establish a  constructive procedure tailored for polyhedra with prescribed dihedral angles, which enables to obtain polyhedra of any combinatorial type that differs from a simplex. 
We illustrate this method through the 
classification of {\it ADEG-polyhedra}, that is, Coxeter polyhedra with mutually intersecting facets and dihedral angles  $\frac{\pi}{2}, \frac{\pi}{3}$ and  $\frac{\pi}{6}$.  Our main result is stated below.

 \begin{thm*}
{\it Let $P\subset \mathbb H^n$ be an ADEG-polyhedron.
Then,  $P$ is non-compact for $n>2$, non-simple for $n>3$ and  $P$ is combinatorially equal to one of the following polyhedra; see also Table \ref{adeg}. 
\begin{enumerate}
\item[\ding{71}] $P$ is a triangle or a tetrahedron.
\item[\ding{71}] $P$ is a doubly-truncated $5$-simplex. 
\item[\ding{71}] $P$ is a pyramid over a product of two or three simplices.  
\item[\ding{71}] $P$ is the polyhedron $P_{\star}\subset \mathbb H^{9}$ depicted in Figure \ref{figN9}.
\end{enumerate}}
\end{thm*}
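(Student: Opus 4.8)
The plan is to argue at the level of the Coxeter diagram $\Sigma$ of $P$, equivalently of its Gram matrix $G$. The nodes of $\Sigma$ are the facets $F_{1},\dots,F_{N}$ of $P$ (so $N\ge n+1$), the hypothesis ``no pair of disjoint facets'' means that $\Sigma$ carries no dotted and no $\infty$-labelled edge, so any two nodes are joined by an edge of label $2$, $3$ or $6$, and $G$ has signature $(n,1)$; moreover $\Sigma$ contains at least one $G_{2}$ subdiagram (some angle equals $\pi/6$). By Vinberg's theory the rank-$k$ elliptic subdiagrams of $\Sigma$ are the $(n-k)$-faces of $P$ and the rank-$(n-1)$ parabolic ones its ideal vertices; since every edge has label $2$, $3$ or $6$, each ordinary vertex link is a direct sum of diagrams of type $A_{\ell},D_{\ell},E_{6},E_{7},E_{8},G_{2}$ and each ideal vertex link a direct sum of affine diagrams of type $\widetilde A_{\ell},\widetilde D_{\ell},\widetilde E_{6},\widetilde E_{7},\widetilde E_{8},\widetilde G_{2}$ — equivalently, no subdiagram of $\Sigma$ is of type $B,C,F_{4},H_{3},H_{4}$ or affine $\widetilde B,\widetilde C,\widetilde F_{4}$. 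This is the combinatorial engine of the argument, and by part (2) of the Proposition it is automatic for $n\ge 7$ once labels $4$ and $5$ are forbidden. The cases $n=2,3$ I would settle directly from the classifications of Coxeter polygons and of hyperbolic Coxeter tetrahedra (\cite{Lanner,Chein}): one reads off the ADEG triangles and tetrahedra, noting that each of the nine compact Coxeter tetrahedra carries a label $\ge 4$, so no compact ADEG tetrahedron exists.

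For $n\ge 4$ I would first rule out the simple case. By Felikson--Tumarkin \cite{FT1} a simple Coxeter polytope with mutually intersecting facets is a simplex or a product of simplices; an ADEG simplex would contain a $G_{2}$ subdiagram, and going through the list of hyperbolic Coxeter simplices shows this forces $n\le 3$, while a hyperbolic Coxeter polytope that is genuinely a product of positive-dimensional simplices is likewise seen not to be ADEG in dimension $\ge 4$. Hence for $n\ge 4$ the polytope $P$ is non-simple and has $N\ge n+2$ facets, and it remains to determine the admissible $\Sigma$ with $N\ge n+2$; this is where the constructive procedure, in the spirit of Allcock \cite{All} and Prokhorov \cite{Prk}, enters.

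Fix an ordinary vertex link $L$ — a spherical ADEG diagram of rank $n$ — and adjoin the remaining facets one node at a time, imposing at each step that: (i) the new node is joined to every old node by an edge of label $2$, $3$ or $6$; (ii) every rank-$n$ elliptic subdiagram it creates is again a sum of $A$--$D$--$E$--$G_{2}$ diagrams and every rank-$(n-1)$ parabolic one a sum of the corresponding affine diagrams; and (iii) the growing Gram matrix retains signature $(n,1)$ and the partial polytope still admits a finite-volume ADEG completion. The key structural observation, established Allcock--Prokhorov style, is that once $\Sigma$ is sufficiently large it must contain a facet orthogonal to all of its neighbours; such a facet is itself a Coxeter polytope and realises $P$ as a doubling of an ADEG polytope with strictly fewer facets, so that an induction on $N$ takes over and, after checking that no admissible doubling yields anything new, the problem reduces to small $N$. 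The cases $N=n+2$ and $N=n+3$ I would cross-check with the classification of hyperbolic Coxeter polytopes with $n+2$ facets \cite{Ess,Kaplinskaya,Tum1} and with the classification of Coxeter pyramids \cite{Tum1,Tum2,Mcleod}: the survivors are the pyramids over a product of two or three simplices, together with the doubly-truncated $5$-simplex. Finally the single branch of the search that escapes the doubling reduction is shown to close up into one rigid $14$-node diagram, realised in signature $(9,1)$ with finite volume — this is $P_{\star}\subset\mathbb H^{9}$ of Figure \ref{figN9} — so that $N\le n+5$ throughout, the value $n+4$ never occurring. Each surviving combinatorial type is then certified by an explicit Gram matrix of signature $(n,1)$ satisfying Vinberg's finite-volume criterion; every polytope on the resulting list with $n\ge 3$ is non-compact, which together with Table \ref{adeg} yields the classification and the clauses ``non-compact for $n>2$'' and ``non-simple for $n>3$''.

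The main obstacle is the third step — making the node-by-node search at once finite and exhaustive. The delicate issues are: (a) proving that, as $\Sigma$ grows, a label $4$ or $5$ (equivalently a forbidden subdiagram of type $B,C,F_{4},H_{3},H_{4}$ or affine $\widetilde B,\widetilde C,\widetilde F_{4}$) can never be forced, so that the ADEG constraint stays self-consistent; (b) showing that ``large'' ADEG diagrams must contain enough orthogonal facets for the doubling reduction, and hence the induction on $N$, to close; and (c) the genuinely hard case analysis verifying that the surviving new branch yields exactly the $14$-facet polytope $P_{\star}$ and that no ADEG polytope with mutually intersecting facets and $\ge n+6$ facets exists. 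Even the realisability step needs care, since several near-misses of the search produce Gram matrices that turn out positive semidefinite, or of indefinite signature other than $(n,1)$, and have to be discarded one by one.
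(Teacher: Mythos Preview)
Your proposal has a genuine gap at its core: the ``doubling reduction'' you invoke does not exist as a workable technique here. For $P$ to be realised as a doubling across a facet $F_i$, that facet must be orthogonal to \emph{every} other facet of $P$, i.e.\ the corresponding node must be isolated in $\Sigma$; but $\Sigma$ is connected for any finite-volume hyperbolic Coxeter polytope, so no such facet exists. There is no Allcock--Prokhorov-style lemma asserting that large ADEG diagrams acquire a facet ``orthogonal to all its neighbours'' in a way that yields an induction on $N$. Consequently your induction on the number of facets never starts, and the search you describe --- adjoining nodes to an ordinary vertex link one at a time --- has no mechanism forcing it to terminate.

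The paper's proof is organised quite differently. The pivot is not an ordinary vertex but the $G_2$-\emph{face}: by Borcherds--Allcock, the codimension-$2$ face $F$ corresponding to a subdiagram $\sigma=G_2=[6]$ is itself a Coxeter polytope, and because for $m\ge 6$ every neighbour of $\sigma$ is a bad neighbour, the Coxeter diagram $\sigma_F$ of $F$ is literally a subdiagram of $\Sigma$ disjoint from $\sigma$. Thus $F$ is an ADE- or ADEG-polyhedron in $\mathbb H^{n-2}$; for $n\ge 5$ this forces $F$ non-compact, hence $\sigma_F$ contains an affine subdiagram, hence $\Sigma$ contains a $\widetilde G_2$ component (Lemma~\ref{G2}). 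One then fixes the ideal vertex $\sigma_\infty=\widetilde G_2\cup\sigma_2\cup\cdots\cup\sigma_m$, fixes a candidate $G_2$-face $F$ (drawn from the finitely many ADE-polyhedra together with the ADEG-polyhedra already found in lower dimension), and searches for the remaining bounding hyperplanes via Prokhorov's explicit formula (Lemma~\ref{Prkformula}) for the Lorentzian product of two vectors in terms of their inner products with the root vectors of $\sigma_\infty$. This is what makes the search finite: each additional hyperplane is encoded by a short string of values in $\{0,1,\sqrt3,3\}$, and the admissibility conditions \eqref{req1}--\eqref{req2} are checked mechanically. The induction is on the \emph{dimension} $n$, not on $N$, and it terminates because ADE-polyhedra cease to exist above dimension $17$ and ADEG-polyhedra (inductively) above $11$.

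Two smaller points. First, your paraphrase of Felikson--Tumarkin is inaccurate: the non-compact simple case yields simplices or the single exceptional polytope $P_0\subset\mathbb H^4$, not ``products of simplices''. Second, starting the node-by-node search from an \emph{ordinary} vertex link loses exactly the leverage the paper exploits: it is the presence of the affine $\widetilde G_2$ component, together with the known classification of its codimension-$2$ face, that anchors the computation.
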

 
\hip
We emphasize that our approach  can accommodate any prescribed set of dihedral angles.  
However, handling configurations involving pairs of disjoint facets is more delicate and is not addressed in our method. 

\begin{figure}[!h]
\centering
\begin{tikzpicture}[
every edge/.style = {draw=black},
vrtx/.style args = {#1/#2}{%
circle, draw,
minimum size=1mm, label=#1:#2}
]
\tikzstyle{every node}=[font=\tiny]
\node(A) [vrtx=above/,scale=0.22,fill=black] at (0,0) {};
\node(B) [vrtx=above/,scale=0.22,fill=black] at (0.5,0) {};
\node(C) [vrtx=above/,scale=0.22,fill=black] at (1,0) {};
\node(D) [vrtx=above/,scale=0.22,fill=black] at (0,0.5) {};
\node(E) [vrtx=above/,scale=0.22,fill=black] at (0.5,0.5) {};
\node(F) [vrtx=above/,scale=0.22,fill=black] at (1,0.5) {};
\node(G) [vrtx=above/,scale=0.22,fill=black] at (0,1) {};
\node(H) [vrtx=above/,scale=0.22,fill=black] at (0.5,1) {};
\node(I) [vrtx=above/,scale=0.22,fill=black] at (1,1) {};
\node(J) [vrtx=above/,scale=0.22,fill=black] at (0,1.5) {};
\node(K) [vrtx=above/,scale=0.22,fill=black] at (0.5,1.5) {};
\node(L) [vrtx=above/,scale=0.22,fill=black] at (1,1.5) {};
\node(y) [vrtx=above/,scale=0.22,fill=black] at (-1.5,0.75) {};
\node(x) [vrtx=above/,scale=0.22,fill=black] at (2.5,0.75) {};
\draw (A) -- (B) node[midway,above] {$6$};
\path (C) edge (B);
\draw (D) -- (E) node[midway,above] {$6$};
\path (E) edge (F);
\draw (G) -- (H) node[midway,above] {$6$};
\path (I) edge (H);
\draw (J) -- (K) node[midway,above] {$6$};
\path (L) edge (K)
(y) edge (A)
(y) edge (D)
(y) edge (G)
(y) edge (J)
(x) edge (C)
(x) edge (F)
(x) edge (I)
(x) edge (L);
\end{tikzpicture}
\caption{The Coxeter polyhedron $P_{\star} \subset \mathbb H^9$}
\label{figN9}
\end{figure}

 \hip 

 Preliminary material for this work is presented in Section \ref{Section1}.  Section  \ref{Section2} focuses on  Coxeter polyhedra with mutually intersecting facets and contains a description of the simple case in view of Felikson and Tumarkin's work \cite{FT1}, as well of the ADE-case due to Prokhorov's work \cite{Prk}. 
The proposition outlined above is established in Section \ref{Section2.3},  and the classification of ADEG-polyhedra is stated in Section \ref{Section3}.  
The proof of the main theorem is given in Section \ref{Section4},  followed by a short discussion on the arithmeticity and commensurability classes of the associated Coxeter groups in Section \ref{Section5}. Some auxiliary data are collected in Appendices \ref{AppendixA},  \ref{AppendixB} and \ref{AppendixC}. 
This work is primarily based  on a chapter of the author's Ph.D. thesis; see  \cite[Chapter 4]{Bredon}.

\hop
 {\bf Acknowledgements.}
\noindent
The author thanks Prof. Dr. Ruth Kellerhals for their guidance, helpful comments and insights all throughout the doctoral research and the writing of the PhD manuscript, on which this article is primarily based.

\section{Hyperbolic Coxeter polyhedra}
\label{Section1}

\subsection{Hyperbolic Coxeter polyhedra and associated Coxeter groups}
Let $\mathbb H^n$  be the hyperbolic space interpreted as the upper sheet of the hyperboloid in $\mathbb R^{n+1}$ 
\[
\mathbb H^n=\{x\in\mathbb R^{n+1}\mid \,\langle x,x\rangle\,=-1\,,\,x_{n+1}>0\}\,,
\]
where $\,\langle x,x\rangle\,=x_1^2+\dots+x_n^2-x_{n+1}^2$ is the standard Lorentzian form, and let its boundary be identified with the set 
\[
\partial \mathbb H^n=\{x\in\mathbb R^{n+1}\mid \,\langle x,x\rangle_{n,1}\,=0\,,\,\sum_{k=1}^{n+1}x_k^2=1\,,\,x_{n+1}>0\}\,.
\]

\hip 

Consider a (finite-volume) polyhedron $P\subset \mathbb H^n$,   that is,  the non-empty intersection of finitely many closed half-spaces $H_{v_i}^-:=\{x \in \mathbb H^n \mid \langle x, v_i\rangle \leq 0 \}$  bounded by  hyperplanes $H_{v_i}$ in $ \mathbb H^n$ 
\begin{equation*}
P=\bigcap_{i=1}^N H_{v_i}^- \ , 
\label{polyhedron}
\end{equation*}
where $N\geq n+1$. 
Here, $v_i$ denotes a unit vector that is normal to the hyperplane $H_{v_i}$ and pointing outwards
of $P$,  interpreted as a spacelike vector in $\mathbb R^{n,1}$. 
\hip 
By construction,  $P$ is convex and completely determined by its normal vectors  $v_1, \dots, v_N$ 
up to isometry. The polyhedron $P$ is said to be a   {\it Coxeter polyhedron} if all of its dihedral angles are integer submultiples of $\pi$.

\hip 
Assume that $P\subset \mathbb H^n$ is a Coxeter polyhedron. 
The Gram matrix $\hbox{Gr}(P)=(\langle v_i, v_j\rangle)_{i,j}$ of $P$  is a real matrix with $1$'s 
on the diagonal and non-positive coefficients off the diagonal given by  \\ 
\begin{equation*}
%g_{ij}
\langle v_i, v_j\rangle= \left \{ 
\begin{array}{ll}
-\hbox{cos}\frac{\pi}{m_{ij}} &  \text{ if }  \ \measuredangle H_i, H_j = \frac{\pi}{m_{ij}} \ ,  \\ 
-1  & \text{ if }  \  H_i, H_j \text{ are parallel }  \ ,  \\ 
-\hbox{cosh} l   & \text{ if }  \  d(H_i, H_j) = l > 0  \ . 
\end{array}
\right . 
\end{equation*}
The Gram matrix $\hbox{Gr}(P)$ is symmetric and has signature $(n,1)$.

\hop 
The {\it Coxeter diagram} $\Sigma$ of $P$ is the non-oriented graph whose nodes correspond to the bounding hyperplanes of $P$,  where two nodes are joined by an edge labeled by $m_{ij}$ whenever $m_{ij}>2$. Note that the label $m_{ij}$ is omitted if $m_{ij}=3$. If two hyperplanes are disjoint in $\mathbb H^n\cup\partial\mathbb H^n$, the corresponding nodes in $\Sigma$ are connected by an edge labeled by $m_{ij}=\infty$. \\
A Coxeter diagram is said to have  {\it rank} $k$ if the associated Gram matrix is positive definite of  rank $k$,  or positive semidefinite of rank $k$. Recall that the Coxeter diagram of a hyperbolic Coxeter polyhedron of finite volume is connected. 
 
\hip In the following, we denote the  linear Coxeter diagram with successive edges labeled by $k_1,\dots,k_r$ by its  {\it Coxeter symbol} $[k_1,\dots, k_r]$.

\hop

Naturally associated with $P\subset \mathbb H^n$ is the discrete group $\Gamma(P)\subset\hbox{Isom}\mathbb H^n$ generated by the reflections $s_i$ with respect to the bounding hyperplanes $H_{v_i}$ of $P$
 \begin{equation}\Gamma(P)=\langle s_1,\dots, s_N \mid s_i^2=1, (s_is_j)^{m_{ij}}=1 \rangle \, ,  
\label{presentation}\end{equation}
where $m_{ij}\in\{2,3,\ldots, \infty\}$ for all $i\neq j$, called the {\it geometric Coxeter group associated with} $P$. 
In this work, the Coxeter groups that will mainly appear satisfy a crystallographic condition, that is, $m_{ij}\in \{2,3,4,6\}$ for all $i\neq j$.

\subsection{Some important correspondences} 
\label{Section1.1}

In what follows, we describe vertices and faces of hyperbolic Coxeter polyhedra and we recall some important correspondences at the level of their Coxeter diagrams.

\hop 

Let $P  \subset \mathbb H^n$ be a Coxeter polyhedron and denote by $\Sigma$  its Coxeter diagram.  

\hip
A {\it $k$-face} (or a face of dimension $k$) of $P$ is the non-empty intersection of 
$n-k$ bounding hyperplanes of $P$ with $\mathbb H^n$. 
A {\it facet} of $P$ is an $(n-1)$-face of $P$, an {\it edge} is a $1$-face of $P$,  
and an {\it ordinary vertex} is a $0$-face of $P$. 

\hip 
From the work of Vinberg \cite{V1}, it is known that there is a one-to-one correspondence between the 
$(n-k)$-faces of $P$ and  the spherical subdiagrams $\sigma$ of $\Sigma$ of rank $k$.
In particular, every ordinary vertex corresponds to a spherical subdiagram of rank $n$.  

\hip 
 All connected spherical Coxeter diagrams of type $A, D, E$ and $G$ are listed in Table \ref{sphericaldiag},  as they are relevant for this work.  A  complete list can be found, for instance,  in \cite{V0}.

\hip A point $v_{\infty} \in \partial\mathbb H^n$ is an {\it ideal vertex} of $P$ if $v_{\infty} \in\overline{P}$ and the intersection $P\cap S_{v_{\infty}}$ of $P$ with a sufficiently small horosphere $S_{v_{\infty}}$ centered at $v_{\infty}$  is compact when viewed as an $(n-1)$-dimensional Euclidean polyhedron.   
In particular, an ideal vertex of $P$ correspond to an affine subdiagram $\sigma_{\infty}\subset \Sigma$ of rank $n-1$.

\hip
 The connected affine Coxeter diagrams of type $\widetilde A, \widetilde D, \widetilde E$ and $\widetilde G$ are listed in Table \ref{sphericaldiag}. Recall that a disjoint union of affine diagrams of rank $k_i$  is affine of rank  $\sum k_i$; see   \cite{V0}.

\begin{table}[!h]
\footnotesize
\tabcolsep=10pt%
\renewcommand*{\arraystretch}{1.7}
\begin{center}
\begin{tabular}{| c | p{0.75cm}  c  || c | p{0.75cm}  c |}
\hline
$n\geq 1$ & $  A_n$ & \begin{tikzpicture}
\fill[black] (0,0) circle (0.05cm);
\fill[black] (1/2,0) circle (0.05cm);
\fill[black] (1.6,0) circle (0.05cm);
\fill[black] (2.1,0) circle (0.05cm);
\draw (0,0) -- (1/2,0) ;
\draw (0.7,0) -- (1/2,0);
\draw (0.7,0) -- (1.4,0) [dotted];
\draw (1.4,0) -- (1.6,0);
\draw (2.1,0) -- (1.6,0);
\end{tikzpicture}  & $n=1$ & $ \widetilde A_1$ & \begin{tikzpicture}
\tikzstyle{every node}=[font=\tiny]
\fill[black] (0,0) circle (0.05cm);
\fill[black] (1/2,0) circle (0.05cm);
\draw (0,0) -- (1/2,0) node [above,midway]{$\infty$} ;
\end{tikzpicture}
\\ 
&&&
$n\geq 2$ &$\widetilde A_n$ & \begin{tikzpicture}
\fill[black] (0,0) circle (0.05cm);
\fill[black] (1/2,0) circle (0.05cm);
\fill[black] (1.6,0) circle (0.05cm);
\fill[black] (2.1,0) circle (0.05cm);
\fill[black] (1.1,1/3) circle (0.05cm);
\draw (0,0) -- (1/2,0) ;
\draw (0.7,0) -- (1/2,0);
\draw (0.7,0) -- (1.4,0) [dotted];
\draw (1.4,0) -- (1.6,0);
\draw (2.1,0) -- (1.6,0);
\draw (0,0) -- (1.1,1/3) ;
\draw (2.1,0) -- (1.1,1/3) ;
\end{tikzpicture}  \\ 
& & & & & 
\\ 
$n\geq 3$ & $D_n$  & \begin{tikzpicture}
\tikzstyle{every node}=[font=\tiny]
\fill[black] (1/2,0) circle (0.05cm);
\fill[black] (1,0) circle (0.05cm);
\fill[black] (1.5,0) circle (0.05cm);
\fill[black] (2.5,0) circle (0.05cm);
\fill[black] (3,1/4) circle (0.05cm);
\fill[black] (3,-1/4) circle (0.05cm);
\draw (1,0) -- (1/2,0) ; 
\draw (1,0) -- (1.5,0) ;
\draw (2.5,0) -- (3,1/4) ;
\draw (2.5,0) -- (3,-1/4) ;
\draw (1.5,0) -- (1.7,0) ;
\draw (2.3,0) -- (1.7,0)[dotted];
\draw (2.3,0) -- (2.5,0) ;
\end{tikzpicture}  & $n\geq 4$ &  $\widetilde D_n$ & \begin{tikzpicture}
\fill[black] (1/2,1/4) circle (0.05cm);
\fill[black] (1/2,-1/4) circle (0.05cm);
\fill[black] (1,0) circle (0.05cm);
\fill[black] (1.5,0) circle (0.05cm);
\fill[black] (2.5,0) circle (0.05cm);
\fill[black] (3,-1/4) circle (0.05cm);
\fill[black] (3,1/4) circle (0.05cm);
\draw (1/2,1/4) -- (1,0) ;
\draw (1/2,-1/4) -- (1,0) ;
\draw (1,0) -- (1.5,0) ;
\draw (2.5,0) -- (3,1/4) ;
\draw (2.5,0) -- (3,-1/4) ;
\draw (1.5,0) -- (1.7,0) ;
\draw (2.3,0) -- (1.7,0)[dotted];
\draw (2.3,0) -- (2.5,0) ;
\end{tikzpicture} 
\\ $n=6$ & 
$E_6$  & \begin{tikzpicture}
\tikzstyle{every node}=[font=\tiny]
\fill[black] (0,0) circle (0.05cm);
\fill[black] (1/2,0) circle (0.05cm);
\fill[black] (2/2,0) circle (0.05cm);
\fill[black] (3/2,0) circle (0.05cm);
\fill[black] (4/2,0) circle (0.05cm);
\fill[black] (2/2,1/2) circle (0.05cm);
\draw (0,0) -- (4/2,0)  ; 
\draw (2/2,0) -- (2/2,1/2) ; 
\end{tikzpicture} & 
$n=6$ & 
$\widetilde E_6$  & \begin{tikzpicture}
\tikzstyle{every node}=[font=\tiny]
\fill[black] (0,0) circle (0.05cm);
\fill[black] (1/2,0) circle (0.05cm);
\fill[black] (2/2,0) circle (0.05cm);
\fill[black] (3/2,0) circle (0.05cm);
\fill[black] (4/2,0) circle (0.05cm);
\fill[black] (2/2,1/2) circle (0.05cm);
\fill[black] (2/2,1) circle (0.05cm);
\draw (0,0) -- (4/2,0)  ; 
\draw (2/2,0) -- (2/2,1) ; 
\end{tikzpicture} 
 \\$n=7$ & 
$E_7$ & \begin{tikzpicture}
\tikzstyle{every node}=[font=\tiny]
\fill[black] (1/2,0) circle (0.05cm);
\fill[black] (2/2,0) circle (0.05cm);
\fill[black] (3/2,0) circle (0.05cm);
\fill[black] (4/2,0) circle (0.05cm);
\fill[black] (5/2,0) circle (0.05cm);
\fill[black] (6/2,0) circle (0.05cm);
\fill[black] (3/2,1/2) circle (0.05cm);
\draw (1/2,0) -- (6/2,0)  ; 
\draw (3/2,0) -- (3/2,1/2) ; 
\end{tikzpicture}
& 
$n=7$ & 
$\widetilde E_7$ & \begin{tikzpicture}
\tikzstyle{every node}=[font=\tiny]
\fill[black] (0,0) circle (0.05cm);
\fill[black] (1/2,0) circle (0.05cm);
\fill[black] (2/2,0) circle (0.05cm);
\fill[black] (3/2,0) circle (0.05cm);
\fill[black] (4/2,0) circle (0.05cm);
\fill[black] (5/2,0) circle (0.05cm);
\fill[black] (6/2,0) circle (0.05cm);
\fill[black] (3/2,1/2) circle (0.05cm);
\draw (0,0) -- (6/2,0)  ; 
\draw (3/2,0) -- (3/2,1/2) ; 
\end{tikzpicture} \\ 
$n=8$ & $E_8$ & \begin{tikzpicture}
\tikzstyle{every node}=[font=\tiny]
\fill[black] (0,0) circle (0.05cm);
\fill[black] (1/2,0) circle (0.05cm);
\fill[black] (2/2,0) circle (0.05cm);
\fill[black] (3/2,0) circle (0.05cm);
\fill[black] (4/2,0) circle (0.05cm);
\fill[black] (5/2,0) circle (0.05cm);
\fill[black] (6/2,0) circle (0.05cm);
%\fill[black] (7/2,0) circle (0.05cm);
\fill[black] (2/2,1/2) circle (0.05cm);
\draw (0,0) -- (6/2,0)  ; 
\draw (2/2,0) -- (2/2,1/2) ; 
\end{tikzpicture}
& $n=8$ & $\widetilde E_8$ & \begin{tikzpicture}
\tikzstyle{every node}=[font=\tiny]
\fill[black] (0,0) circle (0.05cm);
\fill[black] (1/2,0) circle (0.05cm);
\fill[black] (2/2,0) circle (0.05cm);
\fill[black] (3/2,0) circle (0.05cm);
\fill[black] (4/2,0) circle (0.05cm);
\fill[black] (5/2,0) circle (0.05cm);
\fill[black] (6/2,0) circle (0.05cm);
\fill[black] (7/2,0) circle (0.05cm);
\fill[black] (2/2,1/2) circle (0.05cm);
\draw (0,0) -- (7/2,0)  ; 
\draw (2/2,0) -- (2/2,1/2) ; 
\end{tikzpicture} \\  
& & & & & 
\\ 
$n=2$ &  $G_2^{(m)}$ & \begin{tikzpicture}
\tikzstyle{every node}=[font=\tiny]
\fill[black] (0,0) circle (0.05cm);
\fill[black] (1/2,0) circle (0.05cm);
\draw (0,0) -- (1/2,0) node [above,midway]{$m$} ;
\end{tikzpicture} 
& $n=2$ &  $ \widetilde G_2$ &
\begin{tikzpicture}
\tikzstyle{every node}=[font=\tiny]
\fill[black] (0,0) circle (0.05cm);
\fill[black] (1/2,0) circle (0.05cm);
\fill[black] (1,0) circle (0.05cm);
\draw (0,0) -- (1/2,0) node [above,midway]{$6$} ;
\draw (1,0) -- (1/2,0) ;
\end{tikzpicture} \\
\hline 
\end{tabular}
\end{center}
\caption{The irreducible spherical and affine Coxeter groups of type A, D, E and G}
\label{sphericaldiag}
\end{table}

\hip 
 The polyhedron $P$ is  {\it non-compact} if it admits at least one ideal vertex, and it is {\it compact} otherwise.

 \hip 
A vertex  of $P$ is  said to be {\it simple} if it is the intersection of exactly $n$ hyperplanes among the $N$ bounding hyperplanes of $P$.  The  polyhedron $P$ is said to be {\em simple} if all of its vertices are simple. 
Note that all compact hyperbolic polyhedra are simple, and that an ideal vertex of a non-compact polyhedron is non-simple if and only if $\sigma_{\infty}$ consists of at least two affine components.

\begin{example}\normalfont
Consider the Coxeter polyhedron $P\subset \mathbb H^7$ whose Coxeter diagram is depicted below. 
\vspace{0.3em}
$$
 \begin{tikzpicture}
\tikzstyle{every node}=[font=\tiny]
\fill[black] (0,0) circle (0.05cm);
\fill[black] (1/2,0) circle (0.05cm);
\fill[black] (1,0) circle (0.05cm);
\fill[black] (3/2,0) circle (0.05cm);
\fill[black] (3/2,1/2) circle (0.05cm);
\fill[black] (4/2,0.75) circle (0.05cm);
\fill[black] (2/2,0.75) circle (0.05cm);
\fill[black] (4/2,0) circle (0.05cm);
\fill[black] (5/2,0) circle (0.05cm);
\fill[black] (6/2,0) circle (0.05cm);
\draw (0,0) -- (1/2,0) node [above,midway] {$6$} ;
\draw (1,0) -- (1/2,0);
\draw (1,0) -- (3/2,0) ;
\draw (5/2,0) -- (6/2,0) node [above,midway] {$6$} ;
\draw (5/2,0) -- (4/2,0);
\draw (4/2,0) -- (3/2,0) ;
\draw (3/2,0) -- (3/2,1/2) ;
\draw (4/2,0.75) -- (3/2,1/2) ;
\draw (1,0.75) -- (3/2,1/2) ;
\draw (2/2,0.75) -- (4/2,0.75) ;
\end{tikzpicture}$$ 
\noindent
The polyhedron $P$ is a pyramid whose apex   is a non-simple vertex whose vertex link corresponds to the affine subdiagram  $\widetilde A_2 \cup \widetilde G_2\cup\widetilde G_2$.  It has two more ideal vertices, which are simple and  correspond to the subdiagrams of type $\widetilde E_6$.  
\end{example}

\subsection{The finite-volume criterion of Vinberg} 
\label{Section1.2}

  The following theorem established by Vinberg provides a characterization for hyperbolic Coxeter polyhedra of finite volume. It will be fundamental for our work.

\begin{theorem}[Vinberg \cite{V1}] % Finite-volume criterion
A polyhedron $P\subset \mathbb H^n$ has finite volume if and only if the following holds.
\begin{enumerate}
\item $P$ has at least one (ordinary or ideal) vertex. 
\item For every (ordinary or ideal) vertex $v$ of $P$ and every edge of $P$ emanating from $v$,  there is exactly one other (ordinary or ideal) vertex of $P$ on that edge.
\end{enumerate}
\label{thmVin2}
\end{theorem}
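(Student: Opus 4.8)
The plan is to transport the problem into Euclidean convex geometry via the projective (Klein) model, in which $\mathbb H^n$ is the open unit ball $B\subset\mathbb R^n$, geodesics are straight segments, and $P$ becomes $P=\widehat P\cap B$ for a genuine convex Euclidean polyhedron $\widehat P=\bigcap_i\widehat H_i^-\subseteq\mathbb R^n$, i.e.\ a finite intersection of affine half-spaces, possibly unbounded. I will use throughout the dictionary: an ordinary vertex of $P$ is a vertex ($0$-face) of $\widehat P$ lying in $B$, and the edges of $P$ are the traces on $\overline B$ of those $1$-faces of $\widehat P$ that meet $B$.

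The analytic half of the argument is to show that $\operatorname{vol}_{\mathbb H^n}(P)<\infty$ if and only if $\widehat P\subseteq\overline B$; equivalently, $\widehat P$ is bounded (hence a polytope) with all of its vertices in $\overline B$. For the implication ``$\Leftarrow$'', I would split $\widehat P$ into a part bounded away from $S^{n-1}$, of finite volume for trivial reasons, and a cuspidal neighbourhood of each vertex $w$ of $\widehat P$ lying on $S^{n-1}$; since $w$ is a $0$-face and $\widehat P\subseteq\overline B$, the tangent cone of $\widehat P$ at $w$ is a pointed cone meeting the tangent hyperplane $T_w$ of $S^{n-1}$ only at $w$, so a small horospherical section $P\cap S_w$ is a compact Euclidean polytope --- whence $w$ is an ideal vertex --- and the cusp has finite volume, as one checks from the Klein volume element $(1-|x|^2)^{-(n+1)/2}\,\mathrm{d}x$. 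For ``$\Rightarrow$'', if $\widehat P\not\subseteq\overline B$ then $\widehat P$ is either unbounded, hence contains a ray, or it has a vertex outside $\overline B$ and therefore, joined to a point of $P$, contains a segment crossing $S^{n-1}$ transversally; in either case $P$ contains a Euclidean wedge abutting $S^{n-1}$ non-tangentially, and the same volume element shows such a wedge has infinite volume.

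The combinatorial half is the equivalence ``$\widehat P\subseteq\overline B$'' $\iff$ (1) and (2). Here I would first record the model description of ideal vertices: a point $w\in S^{n-1}$ is an ideal vertex of $P$ exactly when $w$ is a vertex of $\widehat P$ whose tangent cone meets $T_w$ only at $w$ (so that $P\cap S_w$ is compact); in particular, when $\widehat P\subseteq\overline B$ the ordinary and ideal vertices of $P$ are precisely the vertices of $\widehat P$. Given this, if $\widehat P\subseteq\overline B$ then $\widehat P$ is the convex hull of its vertices, every $1$-face is a segment between two of them, its relative interior lies in $B$ and contains no vertex of $\widehat P$, and hence each edge of $P$ carries exactly its two endpoints; this yields (1) and (2). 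Conversely, assume (1) and (2). By (1) $\widehat P$ is pointed, so fix a $P$-vertex $v_0\in\overline B$, necessarily a vertex of $\widehat P$. The key observation is that every $1$-face of $\widehat P$ at a $P$-vertex $w$ is a bounded segment whose other endpoint is again a $P$-vertex (so it lies in $\overline B$): if such a $1$-face were a ray, or a segment with its far endpoint outside $\overline B$, then the corresponding edge of $P$ would meet $S^{n-1}$ at a relative-interior point of a face of $\widehat P$, which is neither an ordinary nor an ideal vertex, and so that edge would carry no second vertex, contradicting (2). Since the graph of bounded edges of $\widehat P$ is connected --- from any vertex one reaches the minimiser of a linear functional bounded below on $\widehat P$ using bounded edges only --- starting from $v_0$ this forces every vertex of $\widehat P$ to be a $P$-vertex lying in $\overline B$ and forces $\widehat P$ to have no unbounded $1$-face at all; a pointed polyhedron with trivial recession cone is bounded, so $\widehat P$ is a polytope contained in $\overline B$. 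Together with the analytic half, this proves the theorem.

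I expect the main obstacle to lie in the boundary analysis that underpins both halves: to isolate exactly when a point of $\overline P\cap\partial\mathbb H^n$ is an ideal vertex --- i.e.\ when the local structure of $\widehat P$ there is a pointed cone transverse to $S^{n-1}$, yielding a compact horospherical section and a finite-volume cusp --- rather than a point where $P$ is ``infinitely thin'' or tangent to the boundary; and then to pass from the purely \emph{local} defect that condition (2) detects (a $1$-face of $\widehat P$ running off to infinity or leaving $\overline B$ transversally) to the \emph{global} statement $\widehat P\subseteq\overline B$, which is precisely where condition (1), providing a vertex to anchor the propagation, and the connectedness of the edge graph become indispensable.
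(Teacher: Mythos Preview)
The paper does not prove this theorem; it is quoted from Vinberg's work \cite{V1} and only reformulated afterwards in the language of Coxeter subdiagrams. There is therefore no proof in the paper against which to compare your proposal.

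That said, your approach is sound and is essentially the standard way to establish the criterion: pass to the Klein model so that $P=\widehat P\cap B$ for a Euclidean polyhedron $\widehat P$, reduce finiteness of volume to the inclusion $\widehat P\subseteq\overline B$, and translate that inclusion into conditions (1) and (2) via the face structure of $\widehat P$. Two places merit care when you write it out. First, the equivalence you assert between ``$w$ is an ideal vertex'' (compact horospherical link) and ``$w$ is a vertex of $\widehat P$ whose tangent cone meets $T_w$ only at $\{w\}$'' is exactly what lets you exclude relative-interior boundary points of $1$-faces as candidates for the second vertex in condition~(2); both directions of this equivalence need a clean argument, and you are right to flag it as the delicate step. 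Second, the connectedness of the bounded-edge graph of a pointed polyhedron, while true, is itself a small lemma: your simplex-type argument works, but you should choose the linear functional generically in the normal cone of the target vertex so that it is uniquely minimised there and strictly decreases along each edge step. With these two points secured, the propagation from the anchor vertex $v_0$ goes through and yields $\widehat P\subseteq\overline B$ as claimed.
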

\noindent
Considering the above section, 
Theorem \ref{thmVin2} can be reformulated as follows 
for a Coxeter polyhedron $P\subset \mathbb H^n$ with Coxeter diagram $\Sigma$. 
\begin{enumerate}
\item $\Sigma$ contains at least one spherical subdiagram of rank $n$ or one affine subdiagram of rank $n-1$.
\item Every spherical subdiagram of rank $n-1$ of $\Sigma$ can be extended in exactly
two ways to a spherical subdiagram of rank $n$ or to an affine subdiagram of rank $n-1$.  
\end{enumerate}

\hip 
We state below another non-trivial result, due to Felikson and Tumarkin \cite{FT2}, 
which will be useful later on. 

\begin{prop}[Felikson, Tumarkin \cite{FT2}] Let $P\subset \mathbb H^n$ be a hyperbolic Coxeter polyhedron with Coxeter diagram $\Sigma$.  
Then, no proper subdiagram of $\Sigma$ is the Coxeter diagram of a (finite-volume) Coxeter polyhedron in $\mathbb H^n$. 
\label{propsubdiagram}
\end{prop}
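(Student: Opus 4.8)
The plan is to argue by contradiction and to derive a violation of Vinberg's finite-volume criterion (Theorem~\ref{thmVin2}) for the smaller diagram. Suppose that some proper subdiagram $\Sigma'\subsetneq\Sigma$, supported on an index set $S\subsetneq\{1,\dots,N\}$, is the Coxeter diagram of a finite-volume polyhedron $P'\subset\mathbb H^n$. Its Gram matrix coincides, as an abstract symmetric matrix, with the principal submatrix of $\mathrm{Gr}(P)$ indexed by $S$, and it has signature $(n,1)$; since a Gram matrix of rank $n+1$ determines a spanning tuple of vectors in $\mathbb R^{n,1}$ up to an isometry of $\mathbb R^{n,1}$, we may take the outer normals of $P'$ to be exactly $\{v_i\}_{i\in S}$, where $v_1,\dots,v_N$ are the normals of $P$. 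Then
\[
P=\bigcap_{i=1}^N H_{v_i}^-\ \subseteq\ \bigcap_{i\in S}H_{v_i}^-=P'.
\]
This inclusion is proper: picking $j\in T:=\{1,\dots,N\}\setminus S$ and a point $x_0$ in the relative interior of the facet $F_j=P\cap H_{v_j}$, one has $\langle x_0,v_i\rangle<0$ for every $i\neq j$, so points obtained from $x_0$ by a small displacement across $H_{v_j}$ lie in $\operatorname{int}P'$ but not in $P$. Thus $P\subsetneq P'$.

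Next I would reinterpret everything through the Vinberg correspondence: for both $P$ and $P'$, the ordinary vertices are the spherical subdiagrams of rank $n$, the ideal vertices are the affine subdiagrams of rank $n-1$, and the edges are the spherical subdiagrams of rank $n-1$; moreover $P$ (being of finite volume) is the convex hull of its finitely many ordinary and ideal vertices, so every facet $F_j$ contains at least one such vertex. The argument then splits into two cases, according to whether $P$ has a vertex lying on none of the removed facets $F_j$, $j\in T$.

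If no vertex of $P$ avoids all removed facets, then every spherical rank-$n$ and every affine rank-$(n-1)$ subdiagram of $\Sigma$ uses a node of $T$, hence is not contained in $\Sigma'$; so $\Sigma'$ has neither a spherical subdiagram of rank $n$ nor an affine subdiagram of rank $n-1$, contradicting condition (1) of Vinberg's criterion for $P'$. Otherwise, since $T\neq\emptyset$ forces some vertex of $P$ onto a removed facet, and since the $1$-skeleton of the convex polyhedron $P$ is connected, there is an edge $e$ of $P$ joining a vertex $w$ lying on some removed facet $F_j$ to a vertex $v$ lying on no removed facet. Let $\sigma\subseteq\Sigma$ be the spherical rank-$(n-1)$ subdiagram corresponding to $e$, i.e.\ the set of facets of $P$ containing $e$. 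Every facet containing $e$ contains its endpoint $v$, so $\sigma$ uses no node of $T$, i.e.\ $\sigma\subseteq\Sigma'$; on the other hand $e$ is not contained in $F_j$ (as $v\in\overline e$ while $v$ does not lie on $F_j$), so $j\notin\sigma$, and the node $j\in T$ occurs in the extension of $\sigma$ corresponding to the endpoint $w$. By condition (2) of Vinberg's criterion for $P$, $\sigma$ has exactly two extensions to a spherical rank-$n$ or affine rank-$(n-1)$ subdiagram of $\Sigma$, namely those given by the two endpoints $w$ and $v$; the first disappears in $\Sigma'$ while the second survives (it adds only nodes of $v$, and $v$ lies on no removed facet), so $\sigma$ admits exactly one such extension in $\Sigma'$. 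This contradicts condition (2) of Vinberg's criterion for $P'$.

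Either way one reaches a contradiction, which proves the proposition. The two points that need care are the reduction in the first paragraph — realizing $P'$ with a genuine sub-collection of the normals of $P$, so that the combinatorial inclusion $\Sigma'\subset\Sigma$ becomes the geometric inclusion $P\subsetneq P'$ — and the case distinction itself: the heart of the argument runs through condition (2), but the degenerate possibility that every vertex of $P$ already sits on a removed facet must be handled separately via condition (1). The bookkeeping with ideal vertices and the connectedness of the $1$-skeleton of $P$ are the remaining technical points to verify.
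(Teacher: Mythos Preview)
The paper does not prove this proposition; it is quoted from Felikson--Tumarkin \cite{FT2} and used as a black box, so there is no proof here against which to compare yours.

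Your approach via Vinberg's criterion (Theorem~\ref{thmVin2}) is the natural one and the case analysis in the last two paragraphs is sound. The only soft spot is the opening paragraph: the Coxeter diagram $\Sigma'$ does not in general determine the Gram matrix of $P'$ when dotted (ultraparallel) edges are present, since the diagram records the dihedral angles and the parallel/ultraparallel dichotomy but not the hyperbolic distances. Hence the assertion that $\mathrm{Gr}(P')$ equals the principal submatrix $\mathrm{Gr}(P)|_S$, and with it the geometric inclusion $P\subsetneq P'$, is not justified as stated. Fortunately you never use that inclusion afterwards. The remainder of your argument is purely combinatorial and needs only that a subdiagram of $\Sigma'$ is spherical (respectively affine) of a given rank if and only if the same subdiagram, viewed inside $\Sigma$, is spherical (respectively affine) of that rank. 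This holds because spherical diagrams carry no $\infty$-edges at all, and affine diagrams carry at worst \emph{parallel} edges (Gram entry $-1$), both of which are intrinsic to the labelled graph; an ultraparallel edge can never sit inside a spherical or affine subdiagram in either realization. You may therefore simply drop the first paragraph and run the contradiction with conditions (1) and (2) of Theorem~\ref{thmVin2} for $\Sigma'$ exactly as written; the connectedness of the $1$-skeleton of $P$ and the fact that every facet of $P$ contains a vertex are the genuine geometric inputs, and both are standard for finite-volume convex polyhedra.
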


\subsection{Faces of Coxeter polyhedra}
\label{Section1.3}

Let $P\subset \mathbb H^n $ be a hyperbolic Coxeter polyhedron with Coxeter diagram $\Sigma$, 
and let $\sigma$ be a spherical subdiagram of rank $k$ of $\Sigma$. 
The associated face $F=F(\sigma)$ of $P$ has codimension $k$, 
and it is called  a {\it $\sigma$-face} in what follows.

\hip 
Due to the work of Allcock \cite{All}, we have a good understanding of the $\sigma$-face $F$. 
Consider two facets  $f_1=F_1\cap F$ and  $f_2=F_2\cap F$ of $F$, where  $F_1, F_2$ are  two  facets of $P$.  Then, the dihedral angle $\measuredangle (f_1, f_2)$ satisfies $
\measuredangle (f_1, f_2) \leq 
\measuredangle (F_1, F_ 2)$. 
As a consequence, the face $F\subset \mathbb H^{n-k}$ is an acute-angled polyhedron (of finite volume),  but it is {\it not} necessarily a Coxeter polyhedron.  
\hip 
The following theorem provides a sufficient condition for the face $F$ to be a Coxeter polyhedron.  It was first proven by Borcherds \cite{Bor} in a more general context 
and has been reformulated as follows  by Allcock \cite{All}.

\begin{theorem}[Allcock \cite{All}]
Let $P$ be a hyperbolic Coxeter polyhedron with Coxeter diagram $\Sigma$.  Let $F$ be a $\sigma$-face of $P$.   Assume that $\sigma$ does not contain any component of type $A_{l}$,  for any $l\geq1$,  or $D_5$.  Then, $F$ is itself a  Coxeter polyhedron.  
\label{thmAll}
\end{theorem}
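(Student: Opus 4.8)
The plan is to reduce the statement, via Vinberg's correspondence, to a verification over spherical Coxeter diagrams. Since $F\subset\mathbb{H}^{n-k}$ is already known to be a convex, acute-angled polyhedron of finite volume, it suffices to show that every dihedral angle of $F$ — that is, every angle along an $(n-k-2)$-face of $F$ — is of the form $\pi/m$ for an integer $m$. Such an $(n-k-2)$-face of $F$ is an $(n-k-2)$-face of $P$, hence corresponds to a spherical subdiagram $\tau\supseteq\sigma$ of $\Sigma$ of rank $k+2$; as every subdiagram of a spherical diagram has rank equal to its number of nodes, $\tau$ has exactly two nodes $i,i'$ outside $\sigma$, and the two facets of $F$ meeting along this face are $f_i=F\cap H_{v_i}$ and $f_{i'}=F\cap H_{v_{i'}}$. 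Writing $V_\sigma=\operatorname{span}\{v_j:j\in\sigma\}$, which is positive definite, and letting $\widehat v_i,\widehat v_{i'}$ denote the orthogonal projections of $v_i,v_{i'}$ onto $V_\sigma$, the vectors $w_i:=v_i-\widehat v_i$ and $w_{i'}:=v_{i'}-\widehat v_{i'}$ are spacelike outward normals of $f_i,f_{i'}$ inside the Lorentzian subspace $V_\sigma^{\perp}$ carrying $F$, and a direct Gram-matrix computation yields
\[
\cos\measuredangle(f_i,f_{i'})=\frac{\langle\widehat v_i,\widehat v_{i'}\rangle-\langle v_i,v_{i'}\rangle}{\sqrt{\bigl(1-\langle\widehat v_i,\widehat v_i\rangle\bigr)\bigl(1-\langle\widehat v_{i'},\widehat v_{i'}\rangle\bigr)}}\,.
\]
The key observation is that this quantity is determined entirely by the Gram matrix of the rank-$(k+2)$ spherical diagram $\tau$, together with the choice of the two distinguished nodes $i,i'$.

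The next step is to localise to a single irreducible component of $\sigma$. Only the components of $\sigma$ joined to $i$ by an edge of $\tau$ contribute to $\widehat v_i$ (and likewise for $i'$), so the components met by neither $i$ nor $i'$ may simply be discarded. Moreover $i$ cannot be joined to two distinct components of $\sigma$: these components together with $i$ would form a connected spherical diagram from which deleting $i$ leaves two disjoint pieces, neither of type $A_l$; but deleting a node from a connected spherical diagram always leaves at most one piece that is not of type $A_l$ (such diagrams are forests with at most one branch node and at most one edge of label exceeding $3$), a contradiction. The same analysis shows that if $i$ and $i'$ are joined to two different components of $\sigma$, they cannot be joined to one another, so $m_{ii'}=2$, hence $\langle v_i,v_{i'}\rangle=0$ and $\langle\widehat v_i,\widehat v_{i'}\rangle=0$, and $\measuredangle(f_i,f_{i'})=\pi/2$. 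After these reductions one may assume that $\sigma$ is a single irreducible component $\sigma_0$, which by hypothesis is neither $A_l$ nor $D_5$, and that both $i$ and $i'$ are joined to $\sigma_0$ (possibly one of them trivially, in which case its projection vanishes and the formula already gives an integer submultiple of $\pi$).

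What remains is a finite verification — or, more precisely, finitely many uniform computations. For each irreducible spherical diagram $\sigma_0$ other than $A_l$ and $D_5$ (the type $D_p$ for $p\ge6$, the exceptional types $E_6,E_7,E_8$, $F_4$, $G_2^{(m)}$, $H_3$, $H_4$, and the classical rank-$\ge3$ diagrams with one edge of label $4$) and for each way of adjoining nodes $i,i'$ so that $\sigma_0\cup\{i\}$, $\sigma_0\cup\{i'\}$ and $\tau=\sigma_0\cup\{i,i'\}$ are spherical, one computes $\langle\widehat v_i,\widehat v_i\rangle$, $\langle\widehat v_{i'},\widehat v_{i'}\rangle$ and $\langle\widehat v_i,\widehat v_{i'}\rangle$ from the inverse of the Gram matrix of $\sigma_0$, substitutes into the displayed identity, and checks that the result equals $\cos\frac\pi m$ for some integer $m$; when $\tau$ is not spherical of rank $k+2$ (so that $f_i$ and $f_{i'}$ are parallel or disjoint) there is nothing to check. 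For the infinite families the relevant entries of the inverse Gram matrix are given by explicit formulas in the rank, so only finitely many genuinely different configurations arise. I expect this enumeration, together with the bookkeeping of admissible attaching patterns, to be the main obstacle of the proof; it is also precisely where the hypothesis is used, since for $\sigma$ of type $A_l$ or $D_5$ one can adjoin nodes so that the resulting angle is not an integer submultiple of $\pi$ — already for $\sigma_0=A_1$, taking $i$ and $i'$ to be the two end nodes of an $A_3$-diagram gives $\measuredangle(f_i,f_{i'})=\arccos\tfrac13$.
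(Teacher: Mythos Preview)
The paper does not prove this theorem; it is quoted from Allcock~\cite{All} (with attribution to Borcherds~\cite{Bor} in the original setting), so there is no ``paper's own proof'' to compare against. Your strategy is essentially the one Allcock uses: pass to the projected normals $w_i=v_i-\widehat v_i$ in $V_\sigma^{\perp}$, reduce via the tree structure of spherical diagrams to the case where $\sigma$ is a single component $\sigma_0$, and then run a case analysis over the possible extensions $\sigma_0\cup\{i\}$, $\sigma_0\cup\{i'\}$, $\sigma_0\cup\{i,i'\}$. The formula you display and the localisation argument (that a good neighbour cannot attach to two non-$A$ components, and that neighbours attached to distinct components must be orthogonal) are correct and are exactly the reductions in \cite{All}.

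Two remarks. First, your list of admissible $\sigma_0$ omits $D_4$: you wrote ``$D_p$ for $p\ge 6$'', but $D_4$ is neither $A_l$ nor $D_5$ and must be handled (it is in fact one of the cleanest cases, all attachments being equivalent by triality). Second, you acknowledge that the enumeration is not carried out; this is indeed where the work lies, and the outcome of that enumeration is precisely the content of Theorem~\ref{thmAll2} (the list of resulting angles $\pi/2,\pi/3,\pi/4,\pi/6,\pi/10$ together with the disjoint cases). So your proposal is a correct outline, faithful to the source, but it stops at the point where the actual case-check would begin.
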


\hip 
Observe that the facets of $F$ correspond in $\Sigma$ to the nodes which,  together with $\sigma$,  form a spherical subdiagram of $\Sigma$.  These latter nodes in $\Sigma\setminus\sigma$ are called {\it good neighbours} of $\sigma$. If the subdiagram spanned by $\sigma$ and a node $\nu$ is not spherical, $\nu$ will be called a {\it bad} neighbour of $\sigma$.

\hip 
In addition, the  dihedral angle  $\measuredangle (f_1, f_2)$  formed by  the facets $f_1, f_2$ of $F$ 
can be computed  as follows according to Allcock \cite{All}.

\begin{theorem}[Allcock \cite{All}]
Let $P$ be a Coxeter polyhedron. Consider a $\sigma$-face $F$ of $P$ where $\sigma$ does not contain any component of type $A_{l}$,  for $l\geq1$,  or $D_5$. Let $f_1, f_2$ be two facets of $F$ corresponding to two good neighbours  $\nu_1$ and $\nu_2$ of $\sigma$. 
Then,  $\measuredangle (f_1, f_2)$  is characterized as follows. 
\begin{enumerate}
\item If neither $\nu_1$ nor $\nu_2$ attaches to $\sigma$,  then $\measuredangle (f_1, f_2 ) = \measuredangle (F_1, F_2)$. 
\item Assume that $\nu_1$ and  $ \nu_2$ attach to different components of $\sigma$.  If  $ \measuredangle (F_1, F_2)=\frac{\pi}{2}$,  then  $\measuredangle (f_1, f_2) =\frac{\pi}{2}$.  Otherwise,  $f_1$ and $f_2$ are disjoint.  
\item Assume $\nu_1 $ and  $ \nu_2$ attach to the same component $\sigma_0$ of $\sigma$.  If $\nu_1$ and $\nu_2$ are not joined by an edge and  $\{\sigma_0,  \nu_1, \nu_2\}$ yields a diagram $E_6$ (respectively, $E_8$ or $F_4$),   then  $ \measuredangle (f_1, f_2 ) =\frac{\pi}{3}$ (respectively,  $\frac{\pi}{4}$).   Otherwise,  $f_1$ and $f_2$ are disjoint.  
\item Assume  that $\nu_1$ attaches to a component $\sigma_0$ of $\sigma$,  and $\nu_2$ does not attach to $\sigma$.  If $ \measuredangle (F_1, F_2)=\frac{\pi}{2}$,  then  $ \measuredangle (f_1, f_2 ) =\frac{\pi}{2}$.    If $\nu_1$ and $\nu_2$ are joined by a simple edge and $\{\sigma_0,  \nu_i, \nu_j\}$ yields a diagram $B_k$ (respectively,  $D_k$,  $E_8$ or $H_4$),   then $ \measuredangle (f_1, f_2 ) =\frac{\pi}{4}$ (respectively,  $\frac{\pi}{4}$,  $\frac{\pi}{6}$ or $\frac{\pi}{10}$).   Otherwise,  $f_1$ and $f_2$ are disjoint.  
\end{enumerate}
\label{thmAll2}
\end{theorem}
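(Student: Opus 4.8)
The plan is to realise the face $F$ and its facets by orthogonal projection, and then to read off $\measuredangle(f_1,f_2)$ from the Gram matrix of the subdiagram spanned by $\sigma$ together with $\nu_1,\nu_2$. Write $V_\sigma=\mathrm{span}(v_i:i\in\sigma)\subset\mathbb R^{n,1}$; since $\sigma$ is spherical, $V_\sigma$ is positive definite of dimension $k$, and $F$ lies in the totally geodesic $\mathbb H^{n-k}$ cut out by $V_\sigma^{\perp}$. By Theorem \ref{thmAll} the hypothesis on $\sigma$ guarantees that $F$ is itself a Coxeter polyhedron, so that $\measuredangle(f_1,f_2)$ is a genuine dihedral angle. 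For a good neighbour $\nu_j$ with normal $v_j$, the facet $f_j=F_j\cap F$ of $F$ has outward normal equal to the projection $w_j:=v_j-p_\sigma(v_j)$ of $v_j$ onto $V_\sigma^{\perp}$, where $p_\sigma(v_j)=\sum_{i,i'\in\sigma}\langle v_j,v_i\rangle\,(G_\sigma^{-1})_{ii'}\,v_{i'}$ and $G_\sigma=(\langle v_i,v_{i'}\rangle)_{i,i'\in\sigma}$. Writing $b_j=(\langle v_j,v_i\rangle)_{i\in\sigma}$, one obtains the two basic formulas $\langle w_1,w_2\rangle=\langle v_1,v_2\rangle-b_1^{\top}G_\sigma^{-1}b_2$ and $\langle w_j,w_j\rangle=1-b_j^{\top}G_\sigma^{-1}b_j$. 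Each $w_j$ is spacelike, since $\nu_j$ is a good neighbour and hence $\langle w_j,w_j\rangle>0$; thus the mutual position of $f_1,f_2$ is governed entirely by the normalized product $\langle w_1,w_2\rangle/(\|w_1\|\,\|w_2\|)$, a quantity depending only on the entries of $G_{\sigma\cup\{\nu_1,\nu_2\}}$.

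The decisive step is a Schur-complement identity. The Gram matrix of $\{w_1,w_2\}$ is exactly the Schur complement of $G_\sigma$ in $G_{\sigma\cup\{\nu_1,\nu_2\}}$, whence $\det\big(\langle w_a,w_b\rangle\big)_{a,b}=\det G_{\sigma\cup\{\nu_1,\nu_2\}}/\det G_\sigma$ and likewise $\langle w_j,w_j\rangle=\det G_{\sigma\cup\{\nu_j\}}/\det G_\sigma$. As $\det G_\sigma>0$, this yields a clean dichotomy: $f_1$ and $f_2$ meet transversally (at a finite dihedral angle) precisely when $\{\sigma,\nu_1,\nu_2\}$ is spherical, i.e.\ $G_{\sigma\cup\{\nu_1,\nu_2\}}$ is positive definite; otherwise $\det(\langle w_a,w_b\rangle)\le0$, the two spacelike normals span a degenerate or Lorentzian plane, and $f_1,f_2$ are disjoint (parallel or divergent). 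This single observation disposes of every ``otherwise $f_1,f_2$ are disjoint'' clause: in each such case one need only check that the relevant completion fails to be positive definite, which is immediate from Table \ref{sphericaldiag}. For instance, in case 2 with $\measuredangle(F_1,F_2)\ne\frac{\pi}{2}$ the nodes $\nu_1,\nu_2$ are joined by an edge and link two rank-$\ge2$, $A$-free and $D_5$-free components, forcing a second branch node or a second labelled edge and hence an affine or indefinite diagram.

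It remains to evaluate the angle in the transverse case via $\cos\measuredangle(f_1,f_2)=-\langle w_1,w_2\rangle/(\|w_1\|\,\|w_2\|)$, and the four cases are exactly the attachment patterns that simplify the two basic formulas. If neither $\nu_1$ nor $\nu_2$ attaches to $\sigma$ then $b_1=b_2=0$, so $w_j=v_j$ and $\measuredangle(f_1,f_2)=\measuredangle(F_1,F_2)$ (case 1). If $\nu_1,\nu_2$ attach to distinct components, or if exactly one attaches, then block-diagonality of $G_\sigma$ gives $b_1^{\top}G_\sigma^{-1}b_2=0$, hence $\langle w_1,w_2\rangle=\langle v_1,v_2\rangle$; combined with $\|w_j\|^2=\det G_{\sigma\cup\{\nu_j\}}/\det G_\sigma$ (where $\|w_2\|=1$ when $\nu_2$ does not attach) this settles cases 2 and 4, a right angle staying a right angle and the other angles being read off the determinant ratios. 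When both attach to the same component $\sigma_0$ and are not joined by an edge one has $\langle w_1,w_2\rangle=-b_1^{\top}G_{\sigma_0}^{-1}b_2$ (case 3). In every transverse subcase the angle comes from the standard determinants of Table \ref{sphericaldiag}: for $\{\sigma_0,\nu_1,\nu_2\}=E_8$ with $\sigma_0=E_6$ one gets $\|w_1\|^2=\det G_{E_7}/\det G_{E_6}=\tfrac13$ and $\langle w_1,w_2\rangle=-\tfrac12$, so $\cos\measuredangle(f_1,f_2)=\tfrac{\sqrt3}{2}$ and $\measuredangle(f_1,f_2)=\tfrac{\pi}{6}$; the diagrams $E_6,F_4,B_k,D_k,H_4$ are handled identically.

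The main obstacle is the bookkeeping of the transverse case: one must verify that, under the standing exclusion of $A_l$- and $D_5$-components, the diagrams $E_6,E_8,F_4$ (case 3) and $B_k,D_k,E_8,H_4$ (case 4) are the only positive-definite completions $\{\sigma,\nu_1,\nu_2\}$ that can occur, and then compute each determinant ratio. The exclusion of $A_l$ and $D_5$ does real work here, not merely guaranteeing (through Theorem \ref{thmAll}) that $F$ is a Coxeter polyhedron: it is precisely what removes the completions -- such as an $A_\ell$ closed up to $A_{\ell+2}$, or the $D_5$-based completions of $E_7$ -- whose determinant ratios would yield an angle that is not an integer submultiple of $\pi$. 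Once this finite list is pinned down, the remaining computations are routine and uniform.
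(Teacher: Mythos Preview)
The paper does not give its own proof of this statement: Theorem \ref{thmAll2} is quoted from Allcock \cite{All} and used as a black box, so there is nothing in the paper to compare against.

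Your argument is correct and is essentially Allcock's own: realise $F$ inside $V_\sigma^{\perp}$, take the orthogonally projected normals $w_j=v_j-p_\sigma(v_j)$, and observe that the $2\times 2$ Gram matrix $(\langle w_a,w_b\rangle)$ is the Schur complement of $G_\sigma$ in $G_{\sigma\cup\{\nu_1,\nu_2\}}$. The dichotomy ``$\{\sigma,\nu_1,\nu_2\}$ spherical $\Leftrightarrow$ the $f_i$ meet at a finite angle'' is exactly the right organising principle, and your treatment of the ``otherwise disjoint'' clauses via non--positive-definiteness is clean. The sample computation for $E_8$ in case~4 is correct, and the remark that block-diagonality of $G_\sigma$ kills the cross term $b_1^{\top}G_\sigma^{-1}b_2$ in cases~2 and~4 is the key simplification. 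Two minor points: in case~4 the vanishing of the cross term is really because $b_2=0$, not block-diagonality; and in case~3 you should also record that if $\nu_1,\nu_2$ \emph{are} joined by an edge, the completion $\{\sigma_0,\nu_1,\nu_2\}$ necessarily contains a cycle or a forbidden subdiagram (since $\sigma_0$ is not of type $A$), hence is not spherical. With the finite enumeration of admissible $\sigma_0$ and the corresponding determinant ratios written out, the proof is complete.
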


\begin{example}
\normalfont
Let $P\subset \mathbb H^5$ be the Coxeter polyhedron depicted below.  
$$
\begin{tikzpicture}
\tikzstyle{every node}=[font=\tiny]
\fill[black] (0,0) circle (0.05cm);
\fill[black] (1/2,0) circle (0.05cm);
\fill[black] (1,0) circle (0.05cm);
\fill[black] (3/2,0) circle (0.05cm);
\fill[black] (4/2,0) circle (0.05cm);
\fill[black] (5/2,0) circle (0.05cm);
\fill[black] (6/2,0) circle (0.05cm);
\draw (0,0) -- (1/2,0) node [above,midway]{$5$} ;
\draw (0,0) -- (6/2,0) ;
\draw[dotted] (5/2,0) -- (6/2,0) ;
\end{tikzpicture}
$$
By Theorem \ref{thmAll},  the $G_2^{(5)}$-face of $P$  corresponding to  the subdiagram $G_2^m=[5]$ is the Coxeter prism with Coxeter symbol $[10,3,3,\infty]$.   
\label{ex6}
\end{example}

\begin{remark}
\normalfont
Observe that for any integer $m\geq 6$,
a node that is connected to a subdiagram of type  $\sigma=G_2^{(m)}=[m]\subset \Sigma$
 is necessarily a bad neighbour of $\sigma$; see Table \ref{sphericaldiag}.  
Furthermore,  by Theorem \ref{thmAll2},  the dihedral angles formed by the facets of $F$ are precisely given by the dihedral angles of the corresponding facets of $P$.
As a consequence,   the corresponding $\sigma$-face $F$ of dimension $n-2$  of $P$ is easily identified as its Coxeter diagram $\sigma_F$ is, in fact, a {\it subdiagram}  of $\Sigma$.  
\label{remG2}
\end{remark}

\begin{example}
\normalfont 
Let $P\subset \mathbb H^5$ be the Coxeter polyhedron depicted below, that is a doubly-truncated simplex; see the work of Im Hof \cite{ImHof}. 
$$
\begin{tikzpicture}
\tikzstyle{every node}=[font=\tiny]
\fill[black] (0,0) circle (0.05cm);
\fill[black] (1/2,1/2) circle (0.05cm);
\fill[black] (1,1/2) circle (0.05cm);
\fill[black] (1.5,0) circle (0.05cm);
\fill[black] (1.5,-1/2) circle (0.05cm);
\fill[black] (0,-1/2) circle (0.05cm);
\fill[black] (1/2,-1) circle (0.05cm);
\fill[black] (1,-1) circle (0.05cm);
\draw (0,0) -- (1/2,1/2) node [midway,yshift=0.3em,xshift=-0.4em] {$6$} ;
\draw (1,1/2) -- (1/2,1/2);
\draw (1.5,0) -- (1,1/2) node [midway,xshift=0.4em,yshift=0.3em] {$6$} ;
\draw (1.5,0) -- (1.5,-1/2)node [right,midway] {$6$} ;
\draw (1.5,0) -- (1.5,-1/2);
\draw (1,-1) -- (1.5,-1/2) ;
\draw (1,-1) -- (1/2,-1)node [below,midway] {$6$} ;
\draw (1/2,-1) -- (0,-1/2);
\draw (0,0) -- (0,-1/2);
\end{tikzpicture}$$
\noindent 
By Theorem \ref{thmAll} and Remark \ref{remG2},  it has four $G_2^{(6)}$-faces, given by the Coxeter tetrahedra with Coxeter symbols $[6,3,3]$
 (or $[3,3,6]$),  $[6,3,6]$ and $[3,6,3]$. 
\label{ex6}
\end{example}

\section{Coxeter polyhedra with mutually intersecting facets}
\label{Section2}

In this section, we consider hyperbolic Coxeter polyhedra with {\it mutually intersecting facets},  that is,  
admitting no pair of disjoint facets in $\mathbb H^n\cup\partial\mathbb H^n$. 

\subsection{Simple polyhedra}
 \label{Section2.1}

We start with a classification result due to Felikson and Tumarkin   \cite{FT1}  in the subclass of  {\it simple} Coxeter polyhedra.  

\begin{theorem}[Felikson,  Tumarkin \cite{FT1}]
 Let  $P\subset \mathbb H^n$ be a Coxeter polyhedron with mutually intersecting facets .
\begin{enumerate}
\item 
 If $P$ is compact,  then $n\leq 4$ and $P$ is either a simplex,  or it is an Esselmann polyhedron. 
\item 
If $P$ is non-compact but simple,  then  $n\leq 9$ and $P$ is either a simplex,  or  it is isometric to the polyhedron  $P_0 \subset \mathbb H^4$  depicted in Figure \ref{figP0}. 
\end{enumerate}
\label{thmFT}
\end{theorem}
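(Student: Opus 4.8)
Compact polyhedra are automatically simple, so in both parts $P$ is simple, and I would split the proof into a combinatorial phase --- determining the combinatorial type of $P$ --- and a geometric phase --- classifying the hyperbolic Coxeter realizations of each admissible type and reading off the dimension bounds. The combinatorial phase runs on Vinberg's face--diagram correspondence (Theorem~\ref{thmVin2}) applied to the \emph{missing faces} of $P$: the minimal collections $S=\{F_{1},\dots,F_{j}\}$ of facets that pairwise intersect while $\bigcap_{i}F_{i}=\emptyset$.

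\textbf{Combinatorial phase.}
Let $\Sigma_{S}\subset\Sigma$ be the subdiagram spanned by a missing face $S$. By minimality, every proper subset of $S$ has nonempty common intersection and hence spans a genuine face of $P$, so every proper subdiagram of $\Sigma_{S}$ is spherical; on the other hand $\Sigma_{S}$ itself is non-spherical, since $\bigcap_{i}F_{i}=\emptyset$. A disjoint union of spherical diagrams is spherical, so $\Sigma_{S}$ must be connected, and a connected non-spherical diagram all of whose proper subdiagrams are spherical is affine or Lanner; moreover a rank-$(n+1)$ Lanner subdiagram is excluded by Proposition~\ref{propsubdiagram} unless $P$ is itself a simplex. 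Thus every missing face has size $j\geq 3$ (as $P$ has no pair of disjoint facets) and spans a connected affine or low-rank Lanner subdiagram. Combining this with simplicity --- each vertex lies on exactly $n$ facets, so $P^{*}$ is a $2$-neighborly simplicial polytope --- and analysing how the facets lying outside a minimal missing face must attach to it, with an induction on $n$ whose base cases are immediate (in $\mathbb H^{2}$ only triangles, in $\mathbb H^{3}$ only tetrahedra --- the latter, e.g., by an Euler-characteristic count on the $2$-neighborly triangulated $2$-sphere $P^{*}$), I would conclude that $P$ is a simplex or is combinatorially a product $\Delta^{a_{1}}\times\dots\times\Delta^{a_{k}}$ of $k\geq 2$ simplices, each of dimension $a_{i}\geq 2$ (an interval factor would create a pair of disjoint facets).

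\textbf{Geometric phase.}
If $P$ is a simplex, the classifications of Lanner~\cite{Lanner} (compact: $n\leq 4$) and Chein~\cite{Chein} (non-compact: $n\leq 9$) conclude. If $P$ is combinatorially $\Delta^{a_{1}}\times\dots\times\Delta^{a_{k}}$ with all $a_{i}\geq 2$, then $n=\sum_{i}a_{i}\geq 2k$ and $P$ has exactly $N=n+k$ facets. For $k=2$ one has $N=n+2$, so $P$ appears in the classification of Coxeter $n$-polytopes with $n+2$ facets (Esselmann~\cite{Ess} in the compact case; Kaplinskaya~\cite{Kaplinskaya}, Tumarkin~\cite{Tum1} and Esselmann~\cite{Ess} in the non-compact case); retaining the members that are combinatorially $\Delta^{a}\times\Delta^{b}$ with $a,b\geq 2$ (so not prisms) and, for part~(2), simple, leaves precisely the two Esselmann polyhedra in $\mathbb H^{4}$ and the polyhedron $P_{0}\subset\mathbb H^{4}$. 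For $k\geq 3$, so $N\geq n+3$, I would argue directly on the Gram matrix $\operatorname{Gr}(P)$, which has nonnegative off-diagonal part and is block-structured along the $k$ factors: a vertex of $P$ deletes one node from each block, and by Vinberg's criterion the resulting $n$-node subdiagram must be spherical of rank $n$ or affine of rank $n-1$; tracking these constraints block by block, together with the condition that $\operatorname{Gr}(P)$ have signature $(n,1)$ and with Perron--Frobenius positivity estimates, yields a contradiction (in the compact case one may instead invoke Tumarkin's classification of compact Coxeter polytopes with $n+3$ facets~\cite{Tum3}).

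\textbf{Main obstacle.}
The heart of the proof is the combinatorial phase. Simplicity together with pairwise-intersecting facets is far too weak on its own --- duals of neighborly polytopes are simple with all facets pairwise intersecting --- so the dichotomy ``simplex or product of simplices'' genuinely requires the Coxeter/hyperbolic input: the affine/Lanner description of missing faces, Vinberg's criterion, Proposition~\ref{propsubdiagram}, and the rigidity of a signature-$(n,1)$ Gram matrix. The remaining difficulty is ruling out products of three or more simplices in the non-compact case, where no ready-made small-facet classification applies and the signature bookkeeping on the block Gram matrix must be carried out by hand.
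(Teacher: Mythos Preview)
The paper does not prove Theorem~\ref{thmFT}; it is quoted verbatim as a result of Felikson and Tumarkin~\cite{FT1} and used as a black box (see the sentence immediately following the theorem: ``From Theorem~\ref{thmFT}, all simple Coxeter polyhedra with mutually intersecting facets are known''). There is therefore no proof in this paper for your proposal to be compared against.

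That said, your outline does follow the architecture of the original Felikson--Tumarkin argument: first a combinatorial reduction showing that a simple Coxeter polytope with mutually intersecting facets is a simplex or a product of simplices, then an appeal to the small-facet classifications for the realisation step. Two remarks on the sketch itself. First, the combinatorial phase is where essentially all the work lies, and the clause ``analysing how the facets lying outside a minimal missing face must attach to it, with an induction on $n$'' hides the substantive content of~\cite{FT1}; the passage from ``all missing faces are affine or Lanner of size $\geq 3$'' to ``$P$ is a product of simplices'' is not short and needs a genuine argument, not just an invocation of simplicity and $2$-neighborliness. Second, a minor factual slip: the compact $\Delta^{2}\times\Delta^{2}$ case in $\mathbb H^{4}$ yields more than two Esselmann polyhedra, not ``precisely the two''.
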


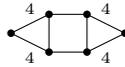
\begin{figure}[!h]
\centering
\begin{tikzpicture}
\tikzstyle{every node}=[font=\tiny]
\fill[black] (-1/2,1/4) circle (0.05cm);
\fill[black] (0,0) circle (0.05cm);
\fill[black] (1/2,0) circle (0.05cm);
\fill[black] (0,1/2) circle (0.05cm);
\fill[black] (1/2,1/2) circle (0.05cm);
\fill[black] (1,1/4) circle (0.05cm);
\draw (0,0) -- (-1/2,1/4) node [below,midway] {4} ;
\draw (-1/2,1/4) -- (0,1/2) node [above,midway] {4} ;
\draw (1/2,0) -- (1/2,1/2) ;
\draw (1/2,1/2) -- (0,1/2) ;
\draw (1/2,0) -- (0,0) ;
\draw (1,1/4) -- (1/2,0) node [below,midway] {4} ;
\draw (1,1/4) -- (1/2,1/2) node [above,midway] {4} ;
\draw (0,0) -- (0,1/2) ; 
\end{tikzpicture} 
\caption{The Coxeter polyhedron $P_0\subset\mathbb H^4$}
\label{figP0}
\end{figure}

\hip  
From Theorem \ref{thmFT}, all simple Coxeter polyhedra with mutually intersecting facets are known. Let us recall that  Esselmann polyhedra are $4$-dimensional polyhedra classified by Esselmann and their list can be found in \cite{Ess}. 
\hip 
 Furthermore, 
from the combinatorial structure of non-simple vertices in dimensions $n\leq 4$,  we deduce the following corollary.  

\begin{corollary} 
Let $n\leq4$.  If $P\subset \mathbb H^n$ is a finite-volume Coxeter polyhedron with mutually intersecting facets, 
then  $P$ is a simplex,  an Esselmann polyhedron, or $P$ is isometric to the polyhedron $P_0\subset\mathbb H^4$  depicted in Figure \ref{figP0}.  
\label{cor}
\end{corollary}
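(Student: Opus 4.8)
The plan is to deduce Corollary \ref{cor} directly from Theorem \ref{thmFT} by showing that in dimensions $n\leq 4$ the only way to fail the simplicity hypothesis of Theorem \ref{thmFT} is already excluded, or leads back to a listed polyhedron. First I would recall the combinatorial structure of a non-simple vertex. By the correspondence of Section \ref{Section1.1}, an ordinary vertex corresponds to a spherical subdiagram of rank $n$, and an ideal vertex to an affine subdiagram of rank $n-1$; moreover, an ideal vertex is non-simple precisely when its link $\sigma_\infty$ has at least two affine components, while any compact polyhedron is automatically simple. So for $n\leq 4$ a non-simple $P$ must be non-compact, and must possess an ideal vertex whose link is an affine diagram of rank $n-1\leq 3$ that decomposes into at least two connected affine pieces.

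Next I would enumerate these decompositions. An affine component has rank at least $1$, the only rank-$1$ affine diagram being $\widetilde A_1=[\infty]$. For $n=4$, $\sigma_\infty$ has rank $3$ and splits as $\widetilde A_1\cup\widetilde A_1\cup\widetilde A_1$ or $\widetilde A_1\cup(\text{rank-}2\text{ affine})$, i.e. $\widetilde A_1\cup\widetilde A_2$ or $\widetilde A_1\cup\widetilde G_2$ (using Table \ref{sphericaldiag}); for $n=3$, $\sigma_\infty$ has rank $2$ and splits only as $\widetilde A_1\cup\widetilde A_1$; for $n\leq 2$ there is no room for two components, so $P$ is automatically simple and the corollary follows from Theorem \ref{thmFT} verbatim. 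The key point is that the presence of an $\widetilde A_1$-component in the vertex link means the apex lies on an edge along which two bounding hyperplanes of $P$ are parallel (label $\infty$ between the corresponding nodes of $\Sigma$); combined with the mutually-intersecting-facets hypothesis — which forbids any pair of \emph{disjoint} facets but permits parallel ones meeting at infinity — this is not immediately contradictory, so one must argue more carefully using Vinberg's criterion (Theorem \ref{thmVin2}) to see that such a configuration forces $P$ itself to be one of the listed shapes, namely a pyramid-type polyhedron already appearing among the simplices/Esselmann polyhedra/$P_0$, or else cannot close up to a finite-volume polyhedron at all.

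Concretely, I would run the following case analysis. Take any node $\nu$ of $\Sigma$ not in the $\widetilde A_1$-component $\{a,b\}$ of $\sigma_\infty$ that is a good neighbour: together with $\widetilde A_1$ it must form a spherical rank-$2$ diagram, forcing $\nu$ attached to at most one of $a,b$ by a simple edge (type $A_3$) — and then extend this local picture around the vertex using the second Vinberg condition (every spherical rank $n-1$ subdiagram extends in exactly two ways). Tracking how $\Sigma$ must grow from $\sigma_\infty$, one finds the diagram is highly constrained: in each of the finitely many cases above, either $\Sigma$ is forced to be a known diagram (a simplex diagram of Lanner/Chein type, an Esselmann diagram, or the diagram of $P_0$), or the extension requirement of Vinberg's criterion fails, ruling the case out. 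Assembling these checks yields exactly the list of Corollary \ref{cor}.

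The main obstacle I expect is the bookkeeping in the $n=4$ case with link $\widetilde A_1\cup\widetilde G_2$ or $\widetilde A_1\cup\widetilde A_2$: here the diagram $\Sigma$ may have genuinely many nodes and one must be careful that no exotic Coxeter polyhedron with mutually intersecting facets slips through — i.e.\ that Theorem \ref{thmFT}'s simple-case list together with Esselmann's and Lanner--Chein's simplex lists really do exhaust everything once non-simple ideal vertices are allowed. This is where one leans most heavily on Proposition \ref{propsubdiagram} (no proper subdiagram is itself a finite-volume polyhedron diagram) to prune candidates, and on the finiteness of the relevant low-rank diagram enumerations; the argument is essentially a finite verification, but organizing it cleanly is the delicate part.
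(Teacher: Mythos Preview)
Your proposal rests on a misreading of the hypothesis. In this paper, as in the Felikson--Tumarkin work on which Theorem~\ref{thmFT} is based, ``mutually intersecting facets'' means that every pair of facets forms a genuine dihedral angle in $\mathbb H^n$; parallel facets---those whose bounding hyperplanes meet only at an ideal point, encoded by an $\infty$-edge and hence by an $\widetilde A_1$ component---are \emph{excluded} by the hypothesis, not permitted. The paper's gloss ``no pair of disjoint facets in $\mathbb H^n\cup\partial\mathbb H^n$'' is admittedly a bit loose on this point, but the intended meaning is the standard one and is confirmed by how the proof is written.

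With this understood, the paper's argument is immediate and your entire case analysis is unnecessary. If $P$ is non-simple then (since compact polyhedra are simple) it has a non-simple ideal vertex whose link $\sigma_\infty$ is an affine diagram of rank $n-1\leq 3$ splitting into $m\geq 2$ connected affine components. Each component has rank at least $1$, so by the rank count at least one component has rank exactly $1$, hence is $\widetilde A_1$. This forces a pair of parallel facets, contradicting the mutual-intersection hypothesis. Therefore $P$ is simple, and Theorem~\ref{thmFT} gives the conclusion directly. There is no need to invoke Vinberg's finite-volume criterion, Proposition~\ref{propsubdiagram}, or any enumeration of low-rank diagrams.
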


\begin{proof}
Let  $2\leq n\leq 4$ and let   $P\subset \mathbb H^n$ be a Coxeter polyhedron with mutually intersecting facets.   
By means of Theorems \ref{thmFT},  if $P$ is simple,  
$P$ is either a simplex or an Esselmann polyhedron,  
or it is isometric to the polyhedron $P_0\subset \mathbb H^4$.   

\hip Assume that $P$ is non-simple. In particular,  assume that $n>2$ as polygons are all simple. Let $v_{\infty}\in\partial\mathbb H^n$  be a non-simple vertex of $P$. 
In the Coxeter diagram $\Sigma$ of $P$, 
the ideal vertex $v_{\infty}$ corresponds to an affine Coxeter subdiagram $\sigma_{\infty}=\sigma_1\cup \ldots \cup \sigma_m$ such that  $\hbox{rank}(\sigma_{\infty})= \sum_{i=1,\dots,m} \hbox{rank}(\sigma_i) = n-1 $.  Furthermore,  one has  $m\geq 2$ as $v_{\infty}$ is non-simple. 
We deduce that $\sigma_{\infty}$ admits at least one affine component of rank $1$.  
Therefore,  $P$ admits at least one pair of parallel facets and this finishes the proof. 
\end{proof}

\subsection{ADE-polyhedra}
 \label{Section2.2}

Among all Coxeter polyhedra with mutually intersecting facets,  we now describe a subfamily completely classified by Prokhorov \cite{Prk}. 

\begin{definition}
An  {\em ADE-polyhedron} is a Coxeter polyhedron (of finite volume) $P\subset \mathbb H^n$  satisfying the following properties.  
\begin{enumerate}
\item [i.]All facets of $P$ are mutually intersecting. 
\item[ii.] All dihedral angles of $P$ are equal to $\frac{\pi}{2}$ or $\frac{\pi}{3}$.  
\end{enumerate}
\end{definition}

\noindent The terminology is inspired by the spherical and affine cases where Coxeter polyhedra with dihedral angles $\frac{\pi}{2}$ or $\frac{\pi}{3}$ only 
 are of type 
 $A_k$,  $D_k$ or $E_ 6, E_7,  E_8$,   as well as  of type $\widetilde A_k$,  $ \widetilde D_k$,  or $ \widetilde E_6,  \widetilde E_7$ and $ \widetilde E_8$. 

\hop 
Based on Nikulin's work  \cite{Nikulin, Nikulin2} on even reflective lattices, 
Prokhorov \cite{Prk} derived that ADE-polyhedra do not exist in  $\mathbb H^n$ for $n>17$, and he established the following classification result.

\begin{theorem}[Prokhorov \cite{Prk}]
Let $P$ be an ADE-polyhedron.  Then,  $P$ is either a simplex,  a pyramid,  or one of the Coxeter polyhedra $P_1\subset \mathbb H^8$ and $P_2 \subset\mathbb H^9$ depicted in Figure \ref{Prk}.  
\label{thmPrk}
\end{theorem}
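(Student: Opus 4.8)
## Proof Plan for the Classification of ADE-polyhedra (Theorem of Prokhorov)

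The plan is to follow the constructive philosophy announced in the introduction, exploiting Vinberg's finite-volume criterion (Theorem \ref{thmVin2} in its diagrammatic form) together with the face-recognition tools of Allcock (Theorems \ref{thmAll} and \ref{thmAll2}). First I would dispose of the low-dimensional cases: for $n\leq 4$, Corollary \ref{cor} already forces $P$ to be a simplex (Esselmann polyhedra have angles $\frac\pi4$ and $P_0$ has angle $\frac\pi4$, hence neither is an ADE-polyhedron), so it remains to treat $n\geq 5$. Fix such a $P$ with Coxeter diagram $\Sigma$ on $N$ nodes; since all angles are $\frac\pi2$ or $\frac\pi3$, $\Sigma$ is a simply-laced graph with no labels exceeding $3$, and since facets mutually intersect there are no $\infty$-edges and no dotted edges. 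The key structural input is that every rank-$(n-1)$ spherical subdiagram extends in exactly two ways to a maximal parabolic/spherical configuration of the right rank; because the ambient diagram is simply-laced of type ADE, the only connected components allowed in any spherical subdiagram are $A_k, D_k, E_6, E_7, E_8$, and in any affine subdiagram $\widetilde A_k, \widetilde D_k, \widetilde E_6, \widetilde E_7, \widetilde E_8$.

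The second step is to extract a distinguished face and argue inductively. If $P$ is a simplex we are done, so assume $N\geq n+2$. I would look for a node $\nu$ whose removal still leaves a finite-volume polyhedron — but Proposition \ref{propsubdiagram} forbids that, so instead the move is to pass to a face: choose an appropriate spherical subdiagram $\sigma$ (of type $D$ or $E$, so that Theorem \ref{thmAll} applies and the face $F(\sigma)$ is itself a Coxeter polyhedron) and read off $F(\sigma)$'s diagram from the good neighbours of $\sigma$ together with Theorem \ref{thmAll2}. The crucial observation is that for ADE-diagrams, the angle computations in Theorem \ref{thmAll2} can only produce $\frac\pi2$ or $\frac\pi3$ (cases producing $\frac\pi4,\frac\pi6,\dots$ require $B_k$, $H_4$, $F_4$ or $E_8$-via-$\nu_1$-attachment configurations that are incompatible with a simply-laced diagram having no double bonds, except the $E_8$-case which one checks separately), so $F(\sigma)$ is again an ADE-polyhedron, now of dimension $n-\rk(\sigma)<n$. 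One would also need to verify $F(\sigma)$ has mutually intersecting facets, which follows because Theorem \ref{thmAll2} never outputs "disjoint" unless $\nu_1,\nu_2$ attach to different components or fail the $E_6/E_8/F_4$ test — situations one rules out by the finite-volume condition on $P$ forcing enough connectivity. This gives a descent: the combinatorics of $P$ in dimension $n$ is controlled by ADE-polyhedra in dimension $n-2$ or $n-3$.

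The third and most delicate step is the finite enumeration that caps the dimension at $n\leq 17$ and pins down the exceptional examples $P_1\subset\mathbb H^8$ and $P_2\subset\mathbb H^9$. Here I expect to invoke Nikulin's bounds on even reflective hyperbolic lattices \cite{Nikulin, Nikulin2}: an ADE-polyhedron's Gram matrix, after scaling, is (up to sign conventions) the Gram matrix of a root system in an even lattice of signature $(n,1)$ whose reflections generate a finite-index subgroup, i.e.\ the lattice is reflective; Nikulin's classification of such lattices is finite and forces $n\leq 17$. With the dimension bounded, the remaining task is a case analysis: for each $n$ with $5\leq n\leq 17$, enumerate connected simply-laced diagrams $\Sigma$ satisfying Vinberg's two-fold extension property, and check which are neither simplices nor pyramids. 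The pyramids arise exactly when $\Sigma$ has a node $\nu_\infty$ (the apex) whose link is an affine diagram $\widetilde\Sigma$ with $\Sigma\setminus\{\nu_\infty\}$ supporting a spherical diagram of the base, and these are precisely Tumarkin's Coxeter pyramids with angles in $\{\frac\pi2,\frac\pi3\}$; the residual non-simplex, non-pyramid cases are then shown by direct inspection to be only $P_1$ and $P_2$.

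The main obstacle, as I see it, is Step 3: the descent in Step 2 organizes the combinatorics but does not by itself terminate the enumeration, and without Nikulin's reflective-lattice bound one would face an a priori unbounded family of diagrams to sieve. The cleanest route is therefore to quote \cite{Nikulin, Nikulin2} for the dimension bound $n\leq 17$ and for the finiteness of the relevant lattices, and then carry out the (finite but lengthy) diagram bookkeeping — extending every rank-$(n-1)$ spherical subdiagram in exactly two ways — to isolate $P_1$ and $P_2$. Everything else (the reduction to $n\geq 5$, the face-descent, the verification that faces stay ADE) is routine given Theorems \ref{thmVin2}, \ref{thmAll}, \ref{thmAll2} and Corollary \ref{cor}.
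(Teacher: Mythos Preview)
The paper does not itself prove Theorem \ref{thmPrk}; it is quoted from \cite{Prk}, with only the remark that the bound $n\le17$ comes from Nikulin's work on even reflective lattices. The method the paper attributes to Prokhorov---and then adapts for ADEG-polyhedra in Section \ref{Section4}---is not a face-descent but the direct enumeration encoded in Lemma \ref{Prkformula}: fix an ideal vertex $\sigma_\infty$, encode each additional bounding hyperplane by its string of inner products $(k_j^i)$ with the roots of the affine components, and solve the admissibility constraints \eqref{req1}--\eqref{req2} coming from the explicit formula \eqref{xy} for $\langle x,y\rangle$. Your Step 3 agrees with this on the Nikulin input, which is the correct source of the dimension bound.

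Your Step 2, however, has a genuine gap. For $\sigma$ of type $D_k$ or $E_k$, Theorem \ref{thmAll2} does \emph{not} guarantee that the facets of $F(\sigma)$ mutually intersect: each of items (2), (3) and (4) carries an ``otherwise, $f_1$ and $f_2$ are disjoint'' clause, and these clauses are easily triggered in a simply-laced diagram (e.g.\ two good neighbours attaching to the same $D_4$-component without forming $E_6$, or attaching to different components of $\sigma$ while joined by an edge in $\Sigma$). So $F(\sigma)$ can acquire pairs of disjoint facets and fall outside the ADE class, and your induction does not close. The sentence ``situations one rules out by the finite-volume condition on $P$ forcing enough connectivity'' is precisely the missing argument. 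Compare Remark \ref{remG2}: the paper's $G_2$-face descent works only because for $m\ge 6$ \emph{no} node can attach to $G_2^{(m)}$, so cases (2)--(4) of Theorem \ref{thmAll2} never arise and $\sigma_F$ is literally a subdiagram of $\Sigma$ with the same angles. There is no analogous mechanism available for $D$- or $E$-type $\sigma$ inside an ADE diagram, which is why Prokhorov's route goes through the root-system formula rather than through faces.
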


\begin{figure}[!h]
\centering
\begin{tikzpicture}[
every edge/.style = {draw=black, },
 vrtx/.style args = {#1/#2}{% 
      circle, draw,
      minimum size=1mm, label=#1:#2}
                    ]
\node(x)  [vrtx=above/,scale=0.22,fill=black] at (1/2,1/4) {};
\node(y)  [vrtx=above/,scale=0.22,fill=black] at (1/4,-1/2) {};
\node(z)  [vrtx=above/,scale=0.22,fill=black] at (-1/2,-1/4) {};
\node(t)  [vrtx=above/,scale=0.22,fill=black] at (-1/4,1/2) {};
\tkzDefPoint(0,0){O}\tkzDefPoint(1,0){A}
\tkzDefPointsBy[rotation=center O angle 360/8](A,B,C,D,E,F,G){B,C,D,E,F,G,H}
\tkzDrawPoints[fill =black,size=2.2,color=black](A,B,C,D,E,F,G,H)
\tkzDrawPolygon[thin](A,B,C,D,E,F,G,H)
\path   (A) edge (x)
   (B) edge (x)
  (z) edge (x)
(C) edge (t)
   (D) edge (t)
  (t) edge (y)
(E) edge (z)
   (F) edge (z)
(G) edge (y)
   (H) edge (y);
\end{tikzpicture}
\qquad 
\begin{tikzpicture}[
every edge/.style = {draw=black, },
 vrtx/.style args = {#1/#2}{% 
      circle, draw,
      minimum size=1mm, label=#1:#2}
                    ]
\node(x)  [vrtx=above/,scale=0.22,fill=black] at (1/2,1/3) {};
\node(y)  [vrtx=above/,scale=0.22,fill=black] at (1/8,-1/2) {};
\node(z)  [vrtx=above/,scale=0.22,fill=black] at (-1/2,1/4) {};
\tkzDefPoint(0,0){O}\tkzDefPoint(1,0){A}
\tkzDefPointsBy[rotation=center O angle 360/9](A,B,C,D,E,F,G,H){B,C,D,E,F,G,H,I}
\tkzDrawPoints[fill =black,size=2.2,color=black](A,B,C,D,E,F,G,H,I)
\tkzDrawPolygon[thin](A,B,C,D,E,F,G,H,I)
\path   (A) edge (x)
   (C) edge (x)
(D) edge (z)
   (F) edge (z)
(G) edge (y)
   (I) edge (y);
\end{tikzpicture}
\caption{The ADE-polyhedra $P_1\subset \mathbb H^8$ and $P_2 \subset\mathbb H^9$}
\label{Prk}
\end{figure}

\hip 
In total,  there are  $34$ hyperbolic ADE-polyhedra, all of which are  non-compact of dimensions $n\geq 3$.  
They are all listed in Appendix \ref{AppendixA},  
as they will play a role in the proof of our main theorem. In fact, they will appear as possible $G_2$-faces of ADEG-polyhedra; see Section \ref{Section4}. 

\hip
Note that the highest-dimensional ADE-polyhedron is the highest-dimensional Coxeter pyramid $P_{17}\subset \mathbb H^{17}$ depicted in Figure \ref{17}; see also \cite{Nikulin, Tum2}. 

\vspace{0.5em}

\begin{figure}[!h]
\centering
\begin{tikzpicture}
\tikzstyle{every node}=[font=\tiny]
\fill[black] (0,0) circle (0.05cm);
\fill[black] (1/2,0) circle (0.05cm);
\fill[black] (1,0) circle (0.05cm);
\fill[black] (3/2,0) circle (0.05cm);

\fill[black] (4/2,0) circle (0.05cm);
\fill[black] (5/2,0) circle (0.05cm);
\fill[black] (6/2,0) circle (0.05cm);

\fill[black] (7/2,0) circle (0.05cm);
\fill[black] (8/2,0) circle (0.05cm);
\fill[black] (9/2,0) circle (0.05cm);
\fill[black] (10/2,0) circle (0.05cm);

\fill[black] (11/2,0) circle (0.05cm);
\fill[black] (12/2,0) circle (0.05cm);
\fill[black] (13/2,0) circle (0.05cm);
\fill[black] (14/2,0) circle (0.05cm);
\fill[black] (15/2,0) circle (0.05cm);
\fill[black] (16/2,0) circle (0.05cm);

\fill[black] (2/2,1/2) circle (0.05cm);
\fill[black] (14/2,1/2) circle (0.05cm);

\draw (14/2,1/2) -- (14/2,0);
\draw (2/2,1/2) -- (2/2,0);
\draw (0,0) -- (16/2,0);
\end{tikzpicture} 
\vspace{0.5em}
\caption{The ADE-pyramid $P_{17} \subset \mathbb H^{17}$}
\label{17}
\end{figure}
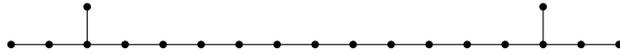

\section{Universal lower bound for the dihedral angles}
 \label{Section2.3}

From the works of Khovanski \cite{Khov} and  Prokhorov \cite{Prk0} it is well known that hyperbolic Coxeter polyhedra of finite volume do not exist in dimensions greater than $995$. In addition, due to Vinberg \cite{V2}, compact hyperbolic Coxeter polyhedra do not exist in dimensions beyond $29$. 
\hip 
In what follows, are insterested in providing a universal lower bound for the dihedral angles of hyperbolic Coxeter polyhedra of large dimensions.

\begin{prop}Let $n\geq 32$ and let $P\subset \mathbb H^n$ be a finite-volume Coxeter polyhedron. Then, a non-zero dihedral angle of $P$ is of the form  $\frac{\pi}{m}$ with $m\leq 6$.  
\label{geq32}
\end{prop}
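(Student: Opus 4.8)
The plan is to argue by contradiction: suppose $P\subset\mathbb H^n$ with $n\geq 32$ has a dihedral angle $\frac{\pi}{m}$ with $m\geq 7$. Then the Coxeter diagram $\Sigma$ of $P$ contains an edge labelled $m\geq 7$, i.e. a subdiagram of type $G_2^{(m)}=[m]$. By Remark \ref{remG2}, every node of $\Sigma$ attached to this $[m]$-subdiagram is a bad neighbour; combined with Theorem \ref{thmAll} and Remark \ref{remG2}, the associated $\sigma$-face $F=F(G_2^{(m)})$ is itself a hyperbolic Coxeter polyhedron of dimension $n-2\geq 30$, whose Coxeter diagram $\sigma_F$ is literally a \emph{subdiagram} of $\Sigma$. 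This reduces the problem to understanding dimensions above $29$.

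The key step is then to invoke the classical bounds: by Vinberg \cite{V2}, compact hyperbolic Coxeter polyhedra do not exist in dimension $\geq 30$, so the face $F\subset\mathbb H^{n-2}$ is necessarily \emph{non-compact}, hence has an ideal vertex. An ideal vertex of $F$ corresponds to an affine subdiagram of $\sigma_F$ of rank $(n-2)-1=n-3\geq 29$. Since $\sigma_F$ is a subdiagram of $\Sigma$, this is also an affine subdiagram of $\Sigma$ of rank $\geq 29$. The plan is to now extract a contradiction from the coexistence inside $\Sigma$ of this large affine subdiagram together with the edge $[m]$, $m\geq 7$: the $[m]$-edge uses two nodes, and by Vinberg's finite-volume criterion (Theorem \ref{thmVin2}, reformulated) every spherical subdiagram of rank $n-1$ must extend in exactly two ways to a spherical subdiagram of rank $n$ or an affine subdiagram of rank $n-1$. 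One inspects how the node set supporting the $[m]$-edge interacts with a maximal affine subdiagram; because affine diagrams of type $\widetilde A,\widetilde B,\widetilde C,\widetilde D,\widetilde E,\widetilde F,\widetilde G$ have bounded multiplicities ($\leq 6$), an edge of multiplicity $\geq 7$ cannot be absorbed, and the diagram cannot simultaneously satisfy Vinberg's extension condition in such high rank. Alternatively, and more cleanly, one iterates: pass to the $[m]$-face of $F$ again if $m\geq 7$ persists, or directly cite that in dimensions $\geq 30$ Vinberg's bound already forbids any finite-volume Coxeter polyhedron once one accounts for the non-compactness forced at every stage.

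I expect the main obstacle to be making the final contradiction fully rigorous: showing that a diagram carrying both a high-rank affine subdiagram (forced by non-compactness in dimension $\geq 30$) and an edge of multiplicity $m\geq 7$ cannot satisfy Vinberg's two-way extension criterion. The cleanest route is probably the following quantitative one. Apply the $G_2^{(m)}$-face construction to get $F\subset\mathbb H^{n-2}$, non-compact, with diagram a subdiagram of $\Sigma$. Repeat if $\sigma_F$ still contains an edge of multiplicity $\geq 7$; each step drops the dimension by $2$ while preserving "edge of multiplicity $\geq 7$" and "diagram is a subdiagram of $\Sigma$", and by Proposition \ref{propsubdiagram} these faces are genuinely lower-dimensional polyhedra. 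Since $n\geq 32$, after the first step we are already in dimension $\geq 30$, where Vinberg's result rules out the compact case, so the non-compactness is automatic throughout; the contradiction is reached once the descent would push us below the dimension where such a configuration is known to exist, or directly from the incompatibility of an order-$\geq 7$ rotation with the crystallographic-type constraints imposed on the affine vertex links in high rank. I would present the bound $n\geq 32$ as exactly what is needed so that the face $F$ lands in the Vinberg-forbidden range $\geq 30$ with two dimensions to spare for the $[m]$-edge nodes.
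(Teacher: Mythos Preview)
Your setup is correct and matches the paper: pass to the $G_2^{(m)}$-face $F\subset\mathbb H^{n-2}$, observe that its diagram $\sigma_F$ is a subdiagram of $\Sigma$ disjoint from $\sigma=[m]$, and use Vinberg's bound ($n-2\geq 30$) to conclude $F$ is non-compact, hence $\sigma_F$ contains an affine subdiagram $\sigma_F^\infty$ of rank $n-3$. But you then stall, and neither of your suggested routes---iterating the face construction, or a vague appeal to the extension criterion and ``crystallographic-type constraints''---is a proof. The iteration in particular does not work: nothing guarantees that $\sigma_F$ again contains an edge of weight $\geq 7$, so the descent can simply stop without producing a contradiction.

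The missing step is short and geometric. The ideal vertex $v_\infty$ of $F$ is also an ideal vertex of $P$ (it lies in $\overline P\cap\partial\mathbb H^n$), and the two bounding hyperplanes of $P$ corresponding to the nodes of $\sigma$ contain $F$, hence pass through $v_\infty$. Therefore the affine vertex-link subdiagram $\tau\subset\Sigma$ of rank $n-1$ at $v_\infty$ contains both $\sigma_F^\infty$ \emph{and} $\sigma$. Since $\sigma_F^\infty$ is already affine of rank $n-3$, its connected components are full components of $\tau$, and the remaining part $\tau'=\tau\setminus\sigma_F^\infty$ is an affine diagram of rank~$2$ containing the edge $\sigma=[m]$. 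But no affine Coxeter diagram has an edge of finite weight $\geq 7$ (inspect the classification: only labels $2,3,4,6,\infty$ occur). This is the contradiction. The paper expresses it tersely as ``$\sigma$ cannot be extended to yield an affine subdiagram for $m\geq 7$'', which is exactly this observation; the bound $n\geq 32$ is used solely to force $F$ non-compact via Vinberg, not to feed an inductive descent.
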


\begin{proof}
Let $P$ be a  Coxeter polyhedron of dimension  $n\geq 32$,  and denote by $\Sigma$ its Coxeter diagram. 
Observe that from the  fundamental result of Vinberg  \cite{V2} on  the non-existence of  {\it compact} Coxeter polyhedra in dimension $n\geq 30$, all faces of codimension $2$ of $P$ are non-compact. 

\hip 
Assume that $P$ admits a non-zero dihedral angle $\frac{\pi}{m}$ with $m\geq 7$, and let $\sigma=[m]$ be the corresponding spherical subdiagram in $\Sigma$. 
 By means of Theorems \ref{thmAll}  and \ref{thmAll2},   the associated face   $F=F(\sigma)$ of codimension $2$ of $P$ 
 is a non-compact Coxeter polyhedron whose  Coxeter diagram $\sigma_F$  is disjoint of $\sigma$;  Remark \ref{remG2}. 
Therefore, it contains an affine Coxeter subdiagram of rank $n-3$, which is necessarily a component of an affine subdiagram of rank $n-1$ in $\Sigma$. 
As  $\Sigma\setminus\sigma_F^{\infty}$  contains  $\sigma$ which cannot be extended to yield an affine subdiagram for $m\geq 7$,   we obtain a contradiction.
\end{proof}

When restricting to Coxeter polyhedra with mutually intersecting facets, this property holds for all dimensions $n\geq 7$. 

\begin{prop}
Let $n\geq 7$,  and let $P\subset\mathbb H^n$ be a Coxeter polyhedron with  mutually intersecting facets.  Then,  a non-zero dihedral angle of $P$ is of the form  $\frac{\pi}{m}$ with $m\leq 6$.  
\label{angles}
\end{prop}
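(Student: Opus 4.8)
The plan is to mimic the strategy of Proposition~\ref{geq32}, replacing the input ``no compact Coxeter polyhedra in high dimension'' by the classification of simple Coxeter polyhedra with mutually intersecting facets (Theorem~\ref{thmFT}) together with the observation that every codimension-$2$ face of such a $P$ is again a polyhedron with mutually intersecting facets, hence is controlled in low dimensions. Concretely, suppose $P\subset\mathbb H^n$ with $n\ge 7$ has a non-zero dihedral angle $\frac{\pi}{m}$ with $m\ge 7$, and let $\sigma=[m]=G_2^{(m)}$ be the corresponding rank-$2$ spherical subdiagram of $\Sigma$. By Remark~\ref{remG2}, any node of $\Sigma$ adjacent to $\sigma$ is a bad neighbour, so by Theorems~\ref{thmAll} and \ref{thmAll2} the $\sigma$-face $F=F(\sigma)$ is itself a Coxeter polyhedron of dimension $n-2\ge 5$, whose Coxeter diagram $\sigma_F$ is a \emph{subdiagram} of $\Sigma$ disjoint from $\sigma$, and whose dihedral angles are a subset of those of $P$. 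In particular $F$ also has mutually intersecting facets (the dihedral angles of $F$ equal those of the corresponding facets of $P$, all finite), so $F$ falls under Theorem~\ref{thmFT}.

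Next I would extract the contradiction. The face $F$ has dimension $n-2\ge 5$, and it has mutually intersecting facets. If $F$ is simple, Theorem~\ref{thmFT} forces $F$ to be a simplex in dimension $5\le n-2\le 7$ (the non-compact simple case only produces simplices beyond dimension $4$, since $P_0\subset\mathbb H^4$), because Esselmann polyhedra and $P_0$ live in dimension $4$. If $F$ is non-simple, then as in the proof of Corollary~\ref{cor} a non-simple ideal vertex of $F$ yields an affine subdiagram $\sigma_\infty$ of $\sigma_F$ of rank $n-3$ with at least two components, hence an affine component of rank $\ge 1$ somewhere; in either case $F$, being a finite-volume polyhedron of dimension $n-2\ge 5$, contains in $\sigma_F$ an affine subdiagram of rank $n-3\ge 3$. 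Since $\sigma_F$ is a subdiagram of $\Sigma$ disjoint from $\sigma$, this affine subdiagram is a proper subset of an affine subdiagram of $\Sigma$ of rank $n-1$ realizing an ideal vertex of $P$ on the corresponding edge (Vinberg's criterion, Theorem~\ref{thmVin2}); but extending it within $\Sigma$ we are forced to pick up the nodes of $\sigma=[m]$, and $G_2^{(m)}$ with $m\ge 7$ admits no affine extension (there is no affine diagram $\widetilde G_2^{(m)}$ for $m\ge7$; see Table~\ref{sphericaldiag}). This is the contradiction.

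The main obstacle is to be careful about exactly which affine subdiagram of $\Sigma$ one is forced into, and that it must genuinely involve $\sigma$. One has to argue that the rank-$(n-3)$ affine piece coming from $F$, together with the constraint that $\Sigma$ has finite volume (every spherical subdiagram of rank $n-1$ extends in exactly two ways), cannot be completed to a rank-$(n-1)$ affine diagram of $\Sigma$ using only nodes outside $\sigma$ -- otherwise $\sigma_F\cup(\text{that completion})$ would already be the diagram of a finite-volume polyhedron in $\mathbb H^{n-2}$ sitting inside $\mathbb H^n$, contradicting Proposition~\ref{propsubdiagram} applied appropriately, or more directly contradicting the dimension count since $\sigma$ is disjoint from $\sigma_F$ and must contribute to any rank-$(n-1)$ configuration. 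I expect the cleanest route is: the rank-$(n-3)$ affine diagram in $\sigma_F\subset\Sigma$ together with any two nodes could only become affine of rank $n-1$ by adjoining two more rank-contributing nodes; the nodes of $\sigma$ are available and lie at distance from $\sigma_F$, but $[m]$ with $m\ge 7$ is not a subdiagram of any connected affine diagram, so no valid extension exists -- forcing the nonexistence of the required second extension in Vinberg's criterion, hence $P$ cannot have finite volume. I would then also record that the bound $m\le 6$ is sharp and note the ideal case (part (3) of the Proposition*, with $m\le 5$) follows by the same argument using that $\widetilde G_2$ has rank $2$ but an ideal vertex forces affine rank $n-1$ with an extra constraint ruling out $m=6$ as well.
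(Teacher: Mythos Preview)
Your approach matches the paper's. Two places where you overcomplicate:

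The simple/non-simple split is unnecessary. Theorem~\ref{thmFT}(1) says directly that a compact Coxeter polyhedron with mutually intersecting facets lives in dimension $\le 4$; since $\dim F=n-2\ge 5$, the face $F$ is non-compact. That is the paper's one-line argument, and it already yields the affine subdiagram $\sigma_F^\infty$ of rank $n-3$ in $\sigma_F$.

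Your ``main obstacle'' dissolves once you argue geometrically rather than combinatorially. The ideal vertex $v_\infty$ of $F$ associated with $\sigma_F^\infty$ is also an ideal vertex of $P$, and the two hyperplanes indexed by $\sigma$ contain $F$ and hence pass through $v_\infty$. Therefore both nodes of $\sigma$ belong to the rank-$(n-1)$ affine subdiagram of $\Sigma$ encoding $v_\infty$ as an ideal vertex of $P$; but no connected affine Coxeter diagram carries an edge labelled $m\ge 7$. This is precisely the ``similar reasoning'' the paper imports from the proof of Proposition~\ref{geq32}; no appeal to Proposition~\ref{propsubdiagram} or to any counting of available nodes is needed.
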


\begin{proof}
Let $n\geq 7$, and let  $P\subset\mathbb H^n$  be a Coxeter polyhedron with mutually intersecting facets.  
Assume that $P$ has a dihedral angle $\frac{\pi}{m}$ with $m\geq 7$.  
Therefore, the Coxeter diagram  $\Sigma$ of $P$ contains a subdiagram $\sigma=G_2^{(m)}=[m]$.

\hip 
By means of Theorems \ref{thmAll}  and \ref{thmAll2}, 
we deduce that the corresponding $G_2^{(m)}$-face $F=F(\sigma)$ of $P$ is a Coxeter polyhedron with mutually intersecting facets in $\mathbb H^{n-2}$; see Remark \ref{remG2}. 
As $n-2\geq 5$,  we deduce from Theorem \ref{thmFT} that the face $F$ is a {\it non-compact} Coxeter polyhedron. 
A similar  reasoning as in the proof of the Proposition \ref{geq32} allows to conclude. 
\end{proof}

 Analogously,  we derive the following proposition for ideal polyhedra.

\begin{prop}Let $n>3$ and let $P\subset \mathbb H^n$ be an ideal Coxeter polyhedron. Then, a non-zero dihedral angle of $P$ is of the form  $\frac{\pi}{m}$ with $m\leq 5$.  
\label{ideal}
\end{prop}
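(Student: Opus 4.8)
The plan is to mimic the arguments of Propositions \ref{geq32} and \ref{angles}, replacing the compactness-obstruction input by the non-existence of small-dimensional ideal Coxeter polyhedra with an extra angle. Suppose $P\subset\mathbb H^n$ is ideal with $n>3$ and admits a non-zero dihedral angle $\frac{\pi}{m}$ with $m\geq6$; I want a contradiction. As before, let $\sigma=G_2^{(m)}=[m]$ be the corresponding spherical rank-$2$ subdiagram of the Coxeter diagram $\Sigma$, and consider the $\sigma$-face $F=F(\sigma)$ of codimension $2$. By Theorems \ref{thmAll} and \ref{thmAll2} together with Remark \ref{remG2}, since $m\geq6$ every neighbour of $\sigma$ in $\Sigma$ is a bad neighbour, so $F$ is itself a Coxeter polyhedron in $\mathbb H^{n-2}$ whose diagram $\sigma_F$ is a \emph{subdiagram} of $\Sigma$ disjoint from $\sigma$, and the dihedral angles of $F$ are exactly those of the corresponding facets of $P$.

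The key geometric step is to observe that $F$ is again ideal. Indeed, an ordinary vertex of $F$ would correspond, via Vinberg's correspondence, to a spherical subdiagram of rank $n-2$ inside $\sigma_F$; adjoining $\sigma$ would give a spherical subdiagram of rank $n$ in $\Sigma$, i.e.\ an \emph{ordinary} vertex of $P$, contradicting that $P$ is ideal. Hence every vertex of $F$ is ideal, so $F$ is an ideal Coxeter polyhedron in $\mathbb H^{n-2}$ with $n-2\geq2$. Now I invoke the finite-volume criterion (Theorem \ref{thmVin2}, in its diagram form): since $\Sigma$ has finite-volume type, $\sigma_F$ contains an affine subdiagram of rank $n-3$, which must be a component of an affine subdiagram $\sigma_\infty$ of rank $n-1$ in $\Sigma$. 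But $\sigma$ is disjoint from $\sigma_F$ and cannot be completed to an affine diagram for $m\geq6$ except in the single case $m=6$, where $G_2^{(6)}$ extends to $\widetilde G_2$ — and this is precisely where the bound $m\leq5$ (rather than $m\leq6$) comes from, since in the ideal setting we must also \emph{exclude} $m=6$.

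I expect the main obstacle to be the exclusion of $m=6$, which the general argument above does not by itself rule out. Here the plan is to analyze $F$ more closely when $m=6$: $F$ is then an ideal Coxeter polyhedron of dimension $n-2\geq2$ all of whose codimension-$2$ faces are again ideal, and one continues descending to reach a low-dimensional ideal Coxeter polyhedron — an ideal polygon or ideal $3$-polyhedron — carrying a residual $\frac{\pi}{6}$ angle somewhere, or more directly one shows that the affine completion $\widetilde G_2$ of $\sigma$ forces a vertex of $P$ whose link contains a spherical (finite) component, contradicting ideality. Concretely, by Vinberg's criterion the subdiagram $\sigma=[6]$ of rank $2$ must extend in exactly two ways to a spherical rank-$n$ or affine rank-$(n-1)$ subdiagram; since any node attached to $[6]$ is a bad neighbour, both extensions are forced to pass through adding the $\widetilde G_2$-node, which can only be done once, yielding the contradiction. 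Filling in this last combinatorial case-check — ensuring no ideal polyhedron in $\mathbb H^{n-2}$ with the inherited angle data survives — is the delicate part; the rest is a routine transcription of Proposition \ref{angles}.
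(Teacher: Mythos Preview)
Your setup matches the paper's: pass to the $\sigma$-face $F$ via Theorems~\ref{thmAll} and~\ref{thmAll2}, using that $\sigma_F$ sits disjointly from $\sigma$ in $\Sigma$. The paper's proof is, however, far more compressed than yours: it simply records that $\sigma_F$ has signature $(n-2,1)$ and is disjoint from $\sigma$, and concludes directly that $\Sigma$ must carry a spherical subdiagram of rank $n$, contradicting ideality. It does not pass through your intermediate observation that $F$ is itself ideal, nor does it isolate $m=6$ as a separate case. Your route instead mirrors the proofs of Propositions~\ref{geq32} and~\ref{angles}: extract an affine subdiagram of rank $n-3$ from $\sigma_F$ and complete it in $\Sigma$. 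This handles $m\geq 7$ cleanly but, as you say, leaves $m=6$ open because $[6]$ does extend to $\widetilde G_2$.

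The gap is in your concrete argument for $m=6$. Vinberg's criterion (Theorem~\ref{thmVin2}) governs extensions of spherical subdiagrams of rank $n-1$, not of the rank-$2$ diagram $\sigma$; the sentence ``$\sigma=[6]$ of rank $2$ must extend in exactly two ways'' misapplies it. Even if you first enlarge $\sigma$ to a rank-$(n-1)$ spherical subdiagram $\rho=\sigma\cup\rho'$ with $\rho'\subset\sigma_F$ and then invoke the criterion, the two affine extensions of $\rho$ need not use the same $\widetilde G_2$-completing node: there may be several bad neighbours $w$ with $\sigma\cup\{w\}=\widetilde G_2$, and in fact the two endpoints of the edge must use \emph{distinct} such $w$'s (the hyperplane $H_w$ cannot meet an edge it does not bound in two points). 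So ``can only be done once'' points in the wrong direction. Your descent suggestion also does not close the case, since $F$ need not inherit a $\pi/6$ angle from $P$. The exclusion of $m=6$ genuinely requires an idea beyond the template of Propositions~\ref{geq32} and~\ref{angles}, and your sketch does not supply it.
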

\begin{proof}
Let $n>3$ and let $P\subset \mathbb H^n$ be an ideal Coxeter polyhedron, that is, all of its vertices are ideal.    
Assume that there exists a subdiagram $\sigma=G_2^{(m)}$ for $m\geq 6$ in the Coxeter diagram $\Sigma$ of $P$.  As previously, this forces  the appearance of a sudiagram $\sigma_F$ of signature $(n-2,1)$ for the corresponding $G_2^{(m)}$-face,   that is disjoint from  $\sigma$.   Therefore,  there exists  a spherical subdiagram of rank $n$ in $\Sigma$,  and  this contradicts the fact that $P$ only has no ordinary vertices. 
\end{proof}

\hip 
Motivated by these angular obstructions,  and inspired by Prokhorov's work \cite{Prk},  we shall develop next a  strategy  to classify Coxeter polyhedra with prescribed dihedral angles.

 \section{The classification of ADEG-polyhedra}
\label{Section3} 

We introduce the subsequent family of hyperbolic Coxeter polyhedra. 

\begin{definition}  
A polyhedron $P\subset \mathbb H^n$ is said to be an   {\it ADEG-polyhedron} if $P$ has finite volume and it satisfies the following properties.  
\begin{enumerate}
\item[i.] All facets of $P$ are mutually intersecting. 
\item[ii.] All dihedral angles of $P$ are equal to $\frac{\pi}{2},  \frac{\pi}{3}$ or $\frac{\pi}{6}$.
\item[iii.] $P$ has at least one dihedral angle $\frac{\pi}{6}$.  
\end{enumerate}
\end{definition}

\hip 
The terminology is inspired from Prokhorov's work \cite{Prk}, as the ADEG-property implies that the spherical and affine subdiagrams that appear and their related root systems are of type $A_k, D_k, E_k$ and $G_2$.

\hop 
The  complete classification of ADEG-polyhedra is stated is the following theorem.  

\begin{theorem} 
Let $P\subset \mathbb H^n$ be an ADEG-polyhedron. 
Then,  $P$ is one of the $24$ Coxeter polyhedra depicted in Table \ref{adeg}.  
In particular,  $P$ is non-compact for $n>2$,  non-simple for $n>3$, and $P$ is of dimension $n\leq 11$.  Furthermore, 
$P$ is either  a triangle or a tetrahedron,  a doubly-truncated simplex in $\mathbb H^5$,  a  pyramid,   or the Coxeter polyhedron $P_{\star} \subset \mathbb H^{9}$  depicted in Figure \ref{N9}.  
\label{thmN9}
\end{theorem}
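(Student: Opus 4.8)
The strategy is to combine the angular obstruction of Proposition~\ref{angles} with Allcock's face-passage machinery (Theorems~\ref{thmAll}, \ref{thmAll2}) and Remark~\ref{remG2}, using the ADE-classification (Theorem~\ref{thmPrk}) as the inductive anchor. First I would dispose of the low-dimensional range $n\le 4$ by Corollary~\ref{cor}: an ADEG-polyhedron is a Coxeter polyhedron with mutually intersecting facets, so it is a simplex, an Esselmann polyhedron, or $P_0\subset\mathbb H^4$; one then checks by inspection which of these have all dihedral angles in $\{\tfrac{\pi}{2},\tfrac{\pi}{3},\tfrac{\pi}{6}\}$ with at least one $\tfrac{\pi}{6}$ (Esselmann polyhedra and $P_0$ carry a $\tfrac{\pi}{4}$, so they are excluded; this leaves triangles and tetrahedra, and forces non-compactness for $n>2$ and non-simplicity for $n>3$ via the parallel-facet argument of Corollary~\ref{cor}). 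For $n\ge 5$, Proposition~\ref{angles} (since $n\ge 7$ gives $m\le 6$, and the cases $n=5,6$ one handles directly or by the same face argument) guarantees the dihedral angles are genuinely only $\tfrac{\pi}{2},\tfrac{\pi}{3},\tfrac{\pi}{6}$, so the Coxeter diagram $\Sigma$ is crystallographic and all its spherical/affine subdiagrams are of type $A,D,E,G$.

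The core of the argument is the following reduction. Since $P$ is assumed to have a $\tfrac{\pi}{6}$ angle, $\Sigma$ contains a subdiagram $\sigma=G_2^{(6)}=[6]$. By Remark~\ref{remG2}, every neighbour of $\sigma$ is a bad neighbour, the dihedral angles of the $\sigma$-face $F=F(\sigma)$ agree with those of $P$, and crucially the Coxeter diagram $\sigma_F$ of $F$ is an honest \emph{subdiagram} of $\Sigma$, obtained by deleting the two nodes of $\sigma$. Thus $F\subset\mathbb H^{n-2}$ is again a Coxeter polyhedron with mutually intersecting facets (angles pass to subfaces, cf.\ Allcock's inequality in Section~\ref{Section1.3}) and all its angles lie in $\{\tfrac{\pi}{2},\tfrac{\pi}{3},\tfrac{\pi}{6}\}$. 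Either $F$ still has a $\tfrac{\pi}{6}$ angle --- in which case we iterate --- or $F$ is an ADE-polyhedron, whose combinatorial type and diagram are completely listed by Theorem~\ref{thmPrk} and Appendix~\ref{AppendixA}. This already forces $n-2\le 17$; refining with the finite-volume criterion (Theorem~\ref{thmVin2}) bounds $n$ further. So the plan is: enumerate all ADE-polyhedra $F$ that can arise as a $G_2^{(6)}$-face, then for each, determine all ways to \emph{reattach} two nodes forming a $[6]$ so that the resulting diagram $\Sigma\supset\sigma_F$ still satisfies Vinberg's criterion in $\mathbb H^n$ and has mutually intersecting facets. This is exactly the ``constructive procedure'' advertised in the introduction.

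The reattachment step is where the real work lies. Given $\sigma_F$ and the data of which nodes of $\sigma_F$ are good/bad neighbours of the two new nodes $\{u_1,u_2\}$ (with $\langle u_1,u_2\rangle=-\cos\tfrac{\pi}{6}$), Theorem~\ref{thmAll2} constrains the admissible bonds: e.g.\ $u_i$ may attach to at most the components of $\sigma_F$ that keep all subdiagrams of type $A,D,E$, no two facets may be disjoint, and every spherical subdiagram of rank $n-1$ must extend in exactly two ways (Theorem~\ref{thmVin2}(2)). Running through the $34$ ADE-polyhedra this way is a finite but delicate case analysis; Proposition~\ref{propsubdiagram} is used repeatedly to discard candidates whose diagram properly contains a finite-volume one. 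The surviving diagrams are precisely the $24$ of Table~\ref{adeg}: triangles/tetrahedra, the doubly-truncated $5$-simplices of Example~\ref{ex6} (realized in $\mathbb H^5$), the pyramids over products of two or three simplices, and the single new sporadic diagram $P_\star\subset\mathbb H^9$ of Figure~\ref{figN9}, which arises as the reattachment to a specific $9$-dimensional ADE-pyramid. The main obstacle, and the place where genuine care (rather than bookkeeping) is needed, is proving \emph{completeness}: showing no other reattachment survives, in particular ruling out higher-dimensional candidates $n=10,11,\dots$ --- here one leans on the rigidity of affine $\widetilde A,\widetilde D,\widetilde E,\widetilde G$ subdiagrams of rank $n-1$ together with the impossibility, for $m=6$, of extending $G_2^{(6)}$ to an affine diagram other than $\widetilde G_2$, exactly as in the proof of Proposition~\ref{geq32}.
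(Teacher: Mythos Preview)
Your reduction to the $G_2$-face $F$ is the right start, but the reconstruction step contains a genuine gap. You claim that $\sigma_F$ is ``obtained by deleting the two nodes of $\sigma$'' and that one therefore only needs to \emph{reattach two nodes forming a $[6]$}. This is false: $\sigma_F$ is the subdiagram on the \emph{good neighbours} of $\sigma$, i.e.\ the nodes not adjacent to either vertex of $\sigma$ (Remark~\ref{remG2}). Bad neighbours---of which there is always at least one, namely the third node of the $\widetilde G_2$ guaranteed by Lemma~\ref{G2}---are deleted as well. In $P_\star$, for instance, each $G_2$ has \emph{two} bad neighbours, so $\Sigma$ has four more nodes than $\sigma_F$, not two; your reattachment procedure therefore cannot produce $P_\star$ (and your description of it as arising from a ``$9$-dimensional ADE-pyramid'' is doubly off: the relevant $F$ is a $7$-dimensional ADEG-pyramid).

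More seriously, even once one realises that arbitrarily many bad-neighbour nodes may have to be added back, you give no mechanism for determining how they attach to one another and to $\sigma_F$. Theorem~\ref{thmAll2} goes the wrong way (polyhedron $\Rightarrow$ face, not face $\Rightarrow$ polyhedron). The paper's actual engine is quite different: one fixes an ideal vertex $v_\infty$ with affine link $\sigma_\infty=\widetilde G_2\cup\sigma_2\cup\dots\cup\sigma_m$, realises the hyperplanes through $v_\infty$ via the root systems of the $\sigma_i$, and then uses Prokhorov's formula (Lemma~\ref{Prkformula}) to compute the Lorentzian products of any additional normal vectors in terms of explicit root-system data. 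This reduces the search for the missing hyperplanes to finitely many arithmetic constraints (the admissibility condition, Definition~\ref{admissible}), which are then checked dimension by dimension. Without this Lorentzian/root-system machinery the ``finite but delicate case analysis'' you invoke has no concrete handle.
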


\begin{figure}[!h]
\centering
\begin{tikzpicture}[
every edge/.style = {draw=black},
 vrtx/.style args = {#1/#2}{% 
      circle, draw,
      minimum size=1mm, label=#1:#2}
                    ]
\tikzstyle{every node}=[font=\tiny]
\node(A)  [vrtx=above/,scale=0.22,fill=black]  at (0,0) {};
\node(B)  [vrtx=above/,scale=0.22,fill=black]  at (0.5,0) {};
\node(C)  [vrtx=above/,scale=0.22,fill=black]  at (1,0) {};
\node(D)  [vrtx=above/,scale=0.22,fill=black]  at (0,0.5) {};
\node(E)  [vrtx=above/,scale=0.22,fill=black]  at (0.5,0.5) {};
\node(F)  [vrtx=above/,scale=0.22,fill=black]  at (1,0.5) {};
\node(G)  [vrtx=above/,scale=0.22,fill=black]  at (0,1) {};
\node(H)  [vrtx=above/,scale=0.22,fill=black]  at (0.5,1) {};
\node(I)  [vrtx=above/,scale=0.22,fill=black]  at (1,1) {};
\node(J)  [vrtx=above/,scale=0.22,fill=black]  at (0,1.5) {};
\node(K)  [vrtx=above/,scale=0.22,fill=black]  at (0.5,1.5) {};
\node(L)  [vrtx=above/,scale=0.22,fill=black]  at (1,1.5) {};
\node(y)  [vrtx=above/,scale=0.22,fill=black]  at (-1.5,0.75) {};
\node(x)  [vrtx=above/,scale=0.22,fill=black]  at (2.5,0.75) {};
\draw (A) -- (B) node[midway,above] {$6$}; 
\path (C) edge (B);
\draw  (D) -- (E)  node[midway,above] {$6$}; 
\path  (E) edge (F);  
\draw (G) -- (H) node[midway,above] {$6$}; 
\path (I) edge (H);
\draw (J) -- (K) node[midway,above] {$6$}; 
\path (L) edge (K)
 (y) edge (A)
 (y) edge (D)
 (y) edge (G)
 (y) edge (J)
 (x) edge (C)
 (x) edge (F)
 (x) edge (I)
 (x) edge (L);
\end{tikzpicture}
\caption{The Coxeter polyhedron $P_{\star} \subset \mathbb H^{9}$}
\label{N9}
\end{figure}

\begin{table}
\begin{center}
\bigskip
\tabcolsep=13pt%
\renewcommand*{\arraystretch}{1.3} 
\begin{tabular}{c|c}
$n$& \\
\hline
2 & \\ & 
\begin{tikzpicture}\tikzstyle{every node}=[font=\tiny]
\fill[black] (-1,0) circle (0.05cm);
\fill[black] (-1.5,0) circle (0.05cm);
\fill[black] (-2,0) circle (0.05cm);
\draw (-2,0) -- (-1.5,0) node [above,midway] {$6$} ;
\draw (-1.5,0) -- (-1,0)  node [above,midway] {$6$} ;

\fill[black] (0,0) circle (0.05cm);\fill[black] (1,0) circle (0.05cm);\fill[black] (1/2,1/2) circle (0.05cm);\draw (0,0) -- (1,0) node [below,midway] {$6$} ;\draw (1,0) -- (1/2,1/2);\draw (0,0) -- (1/2,1/2) ;\end{tikzpicture} \qquad \begin{tikzpicture}\tikzstyle{every node}=[font=\tiny]\fill[black] (0,0) circle (0.05cm);\fill[black] (1,0) circle (0.05cm);\fill[black] (1/2,1/2) circle (0.05cm);\draw (0,0) -- (1,0) node [below,midway] {$6$} ;\draw (1,0) -- (1/2,1/2) node [right,midway] {$6$} ;\draw (0,0) -- (1/2,1/2)  ;\end{tikzpicture}\qquad \begin{tikzpicture}\tikzstyle{every node}=[font=\tiny]\fill[black] (0,0) circle (0.05cm);\fill[black] (1,0) circle (0.05cm);\fill[black] (1/2,1/2) circle (0.05cm);\draw (0,0) -- (1,0) node [below,midway] {$6$} ;\draw (1,0) -- (1/2,1/2)node [right,midway] {$6$} ;\draw (0,0) -- (1/2,1/2) node [left,midway] {$6$} ;\end{tikzpicture}\\\hline
3 & 
\begin{tikzpicture}
\tikzstyle{every node}=[font=\tiny]
\fill[black] (0,0) circle (0.05cm);
\fill[black] (1/2,0) circle (0.05cm);
\fill[black] (1,0) circle (0.05cm);
\fill[black] (3/2,0) circle (0.05cm);
\draw (0,0) -- (1/2,0) node [above,midway] {$6$} ;
\draw (1,0) -- (1/2,0);
\draw (1,0) -- (3/2,0) ;
\end{tikzpicture}
 \qquad  \quad
\begin{tikzpicture}
\tikzstyle{every node}=[font=\tiny]
\fill[black] (0,0) circle (0.05cm);
\fill[black] (1/2,0) circle (0.05cm);
\fill[black] (1,0) circle (0.05cm);
\fill[black] (3/2,0) circle (0.05cm);
\draw (0,0) -- (1/2,0) node [above,midway] {$6$} ;
\draw (1,0) -- (1/2,0);
\draw (1,0) -- (3/2,0) node [above,midway] {$6$} ;
\end{tikzpicture}
 \qquad  \quad\begin{tikzpicture}
\tikzstyle{every node}=[font=\tiny]
\fill[black] (0,0) circle (0.05cm);
\fill[black] (1/2,0) circle (0.05cm);
\fill[black] (1,0) circle (0.05cm);
\fill[black] (3/2,0) circle (0.05cm);
\draw (0,0) -- (1/2,0)  ;
\draw (1,0) -- (1/2,0)node [above,midway] {$6$};
\draw (1,0) -- (3/2,0) ;
\end{tikzpicture}
\\ 
& 
\begin{tikzpicture}
\tikzstyle{every node}=[font=\tiny]
\fill[black] (0,0) circle (0.05cm);
\fill[black] (1/2,0) circle (0.05cm);
\fill[black] (1,-1/4) circle (0.05cm);
\fill[black] (1,1/4) circle (0.05cm);
\draw (0,0) -- (1/2,0) node [above,midway] {$6$} ; 
\draw (1,1/4) -- (1/2,0);
\draw (1,-1/4) -- (1/2,0) ;
\end{tikzpicture}
 \qquad  \quad
\begin{tikzpicture}
\tikzstyle{every node}=[font=\tiny]
\fill[black] (0,0) circle (0.05cm);
\fill[black] (1/2,0) circle (0.05cm);
\fill[black] (1,1/4) circle (0.05cm);
\fill[black] (1,-1/4) circle (0.05cm);
\draw (0,0) -- (1/2,0) node [above,midway] {$6$} ;
\draw (1,1/4) -- (1/2,0);
\draw (1,-1/4) -- (1/2,0) ;
\draw (1,1/4) -- (1,-1/4);
\end{tikzpicture}  %
 \qquad  \quad 
\begin{tikzpicture}
\tikzstyle{every node}=[font=\tiny]
\fill[black] (0,0) circle (0.05cm);
\fill[black] (1/2,0) circle (0.05cm);
\fill[black] (1/2,-1/2) circle (0.05cm);
\fill[black] (0,-1/2) circle (0.05cm);
\draw (0,0) -- (1/2,0) node [above,midway] {$6$} ;
\draw (1/2,0) -- (1/2,-1/2)   ;
\draw (1/2,-1/2) -- (0,-1/2) ;
\draw (0,0) -- (0,-1/2) ;
\end{tikzpicture} 
 \qquad  \quad
\begin{tikzpicture}
\tikzstyle{every node}=[font=\tiny]
\fill[black] (0,0) circle (0.05cm);
\fill[black] (1/2,0) circle (0.05cm);
\fill[black] (1/2,-1/2) circle (0.05cm);
\fill[black] (0,-1/2) circle (0.05cm);
\draw (0,0) -- (1/2,0) ;
\draw (1/2,0) -- (1/2,-1/2)  node [right,midway] {$6$};
\draw (1/2,-1/2) -- (0,-1/2) ;
\draw (0,0) -- (0,-1/2) node [left,midway] {$6$};
\end{tikzpicture}  
 \\
\hline
5 & 
\begin{tikzpicture}
\tikzstyle{every node}=[font=\tiny]
\fill[black] (0,0) circle (0.05cm);
\fill[black] (1/2,0) circle (0.05cm);
\fill[black] (1,0) circle (0.05cm);
\fill[black] (3/2,0) circle (0.05cm);

\fill[black] (4/2,0) circle (0.05cm);
\fill[black] (5/2,0) circle (0.05cm);
\fill[black] (6/2,0) circle (0.05cm);

\draw (0,0) -- (1/2,0) node [above,midway] {$6$} ;
\draw (1,0) -- (1/2,0);
\draw (1,0) -- (3/2,0) ;

\draw (5/2,0) -- (6/2,0) node [above,midway] {$6$} ;
\draw (5/2,0) -- (4/2,0);
\draw (4/2,0) -- (3/2,0) ;

\tikzstyle{every node}=[font=\tiny]
\fill[black] (4,0) circle (0.05cm);
\fill[black] (9/2,0) circle (0.05cm);
\fill[black] (5,0) circle (0.05cm);
\fill[black] (11/2,0) circle (0.05cm);

\fill[black] (12/2,0) circle (0.05cm);
\fill[black] (13/2,1/4) circle (0.05cm);
\fill[black] (13/2,-1/4) circle (0.05cm);

\draw (4,0) -- (9/2,0) node [above,midway] {$6$} ;
\draw (9/2,0) -- (9/2,0);
\draw (9/2,0) -- (11/2,0) ;

\draw (12/2,0) -- (11/2,0) ;
\draw (13/2,1/4) -- (13/2,-1/4) ; 
\draw (13/2,1/4) -- (12/2,0);
\draw (12/2,0) -- (13/2,-1/4) ;
\end{tikzpicture} 
\\
& 
\begin{tikzpicture}\tikzstyle{every node}=[font=\tiny]\fill[black] (0,0) circle (0.05cm);\fill[black] (1/2,1/2) circle (0.05cm);\fill[black] (1,1/2) circle (0.05cm);\fill[black] (1.5,0) circle (0.05cm);\fill[black] (1.5,-1/2) circle (0.05cm);\fill[black] (0,-1/2) circle (0.05cm);\fill[black] (1/2,-1) circle (0.05cm);\fill[black] (1,-1) circle (0.05cm);

\draw (0,0) -- (1/2,1/2) node [midway,yshift=0.3em,xshift=-0.4em] {$6$} ;
\draw (1,1/2) -- (1/2,1/2);\draw (1.5,0) -- (1,1/2) node [midway,yshift=0.3em,xshift=0.4em] {$6$} ;
\draw (1.5,0) -- (1.5,-1/2);\draw (1.5,0) -- (1.5,-1/2);
\draw (1,-1) -- (1.5,-1/2) node [midway,yshift=-0.3em,xshift=0.4em] {$6$} ;
\draw (1,-1) -- (1/2,-1);\draw (1/2,-1) -- (0,-1/2) node [midway,yshift=-0.3em,xshift=-0.4em] {$6$} ;
\draw (0,0) -- (0,-1/2);

\tikzstyle{every node}=[font=\small]\fill[black] (3,0) circle (0.05cm);\fill[black] (1/2,1/2) circle (0.05cm);\fill[black] (1,1/2) circle (0.05cm);\fill[black] (1.5,0) circle (0.05cm);\fill[black] (1.5,-1/2) circle (0.05cm);\fill[black] (0,-1/2) circle (0.05cm);\fill[black] (1/2,-1) circle (0.05cm);\fill[black] (1,-1) circle (0.05cm);

\tikzstyle{every node}=[font=\tiny]
\fill[black] (3,0) circle (0.05cm);
\fill[black] (7/2,1/2) circle (0.05cm);
\fill[black] (8/2,1/2) circle (0.05cm);
\fill[black] (9/2,0) circle (0.05cm);
\fill[black] (9/2,-1/2) circle (0.05cm);
\fill[black] (3,-1/2) circle (0.05cm);
\fill[black] (7/2,-1) circle (0.05cm);
\fill[black] (8/2,-1) circle (0.05cm);

\draw (3,0) -- (7/2,1/2) node [midway,yshift=0.3em,xshift=-0.4em] {$6$} ;
\draw (4,1/2) -- (7/2,1/2);
\draw (9/2,0) -- (4,1/2) node [midway,xshift=0.4em,yshift=0.3em] {$6$} ;
\draw (9/2,0) -- (9/2,-1/2)node [right,midway] {$6$} ;
\draw (9/2,0) -- (9/2,-1/2);
\draw (8/2,-1) -- (9/2,-1/2) ;
\draw (8/2,-1) -- (7/2,-1)node [below,midway] {$6$} ;
\draw (7/2,-1) -- (3,-1/2);
\draw (3,0) -- (3,-1/2);
\end{tikzpicture} \\ 
\hline
6 & 
\begin{tikzpicture}[
every edge/.style = {draw=black, },
 vrtx/.style args = {#1/#2}{% 
      circle, draw,
      minimum size=1mm, label=#1:#2}
                    ]
\node(A)  [vrtx=above/,scale=0.22,fill=black]  at (0,0) {};
\node(B)  [vrtx=above/,scale=0.22,fill=black]  at (0.5,0.25) {};
\node(C)  [vrtx=above/,scale=0.22,fill=black]  at (1,0) {};
\node(D)  [vrtx=above/,scale=0.22,fill=black]  at (0.5,-0.25) {};
\node(E)  [vrtx=above/,scale=0.22,fill=black]  at (1.5,0) {};
\node(F)  [vrtx=above/,scale=0.22,fill=black]  at (2,0) {};
\node(I)  [vrtx=above/,scale=0.22,fill=black]  at (2.5,0) {};
\node(J)  [vrtx=above/,scale=0.22,fill=black]  at (3,0) {};

\path  (A) edge (B)
(C) edge (B)
(C) edge (D)
(A) edge (D)
(E) edge (C)
(E) edge (F)
(I) edge (F);

\tikzstyle{every node}=[font=\tiny]
\draw (I) -- (J)node [above,midway] {$6$} ;
\end{tikzpicture}
 \\
\hline
7
& 
\begin{tikzpicture}[
every edge/.style = {draw=black, },
 vrtx/.style args = {#1/#2}{% 
      circle, draw,
      minimum size=1mm, label=#1:#2}
                    ]
\node(A)  [vrtx=above/,scale=0.22,fill=black]  at (0,-0.25) {};
\node(B)  [vrtx=above/,scale=0.22,fill=black]  at (0.5,-0.5) {};
\node(C)  [vrtx=above/,scale=0.22,fill=black]  at (0.25,-1) {};
\node(D)  [vrtx=above/,scale=0.22,fill=black]  at (-0.25,-1) {};
\node(E)  [vrtx=above/,scale=0.22,fill=black]  at (-0.5,-0.5) {};

\node(F) [vrtx=above/,scale=0.22,fill=black]  at (1,-0.5) {};
\node(G) [vrtx=above/,scale=0.22,fill=black]  at (1.5,-0.5) {};
\node(H) [vrtx=above/,scale=0.22,fill=black]  at (2,-0.5) {};
\node(I) [vrtx=above/,scale=0.22,fill=black]  at (2.5,-0.5) {};

\path (A) edge (B)  
(C) edge (B)
(C) edge (D)
(E) edge (D)
(E) edge (A)
(B) edge (F)
(F) edge (G)
(I) edge (G)
;
\tikzstyle{every node}=[font=\tiny]
\draw (I) -- (H)node [above,midway] {$6$} ;

\end{tikzpicture}
\quad 
\begin{tikzpicture}[
every edge/.style = {draw=black, },
 vrtx/.style args = {#1/#2}{% 
      circle, draw,
      minimum size=1mm, label=#1:#2}
                    ]
\node(B)  [vrtx=above/,scale=0.22,fill=black]  at (0,0) {};
\node(C)  [vrtx=above/,scale=0.22,fill=black]  at (1,0) {};
\node(D)  [vrtx=above/,scale=0.22,fill=black]  at (0.5,0) {};
\node(E)  [vrtx=above/,scale=0.22,fill=black]  at (1.5,0) {};
\node(F)  [vrtx=above/,scale=0.22,fill=black]  at (2,0) {};
\node(G)  [vrtx=above/,scale=0.22,fill=black]  at (2.5,0) {};
\node(H)  [vrtx=above/,scale=0.22,fill=black]  at (3,0) {};
\node(I)  [vrtx=above/,scale=0.22,fill=black]  at (2.5,0.5) {};
\node(J)  [vrtx=above/,scale=0.22,fill=black]  at (2.5,-0.5) {};

\path 
(C) edge (B)
(C) edge (D)
(E) edge (C)
(E) edge (F)
(F) edge (G)
(H) edge (G)
(I) edge (G) 
(J) edge (G) ;

\tikzstyle{every node}=[font=\tiny]
\draw (B) -- (D)node [above,midway] {$6$} ;
\end{tikzpicture}\quad
\\
& \begin{tikzpicture}
\tikzstyle{every node}=[font=\tiny]
\fill[black] (0,0) circle (0.05cm);
\fill[black] (1/2,0) circle (0.05cm);
\fill[black] (1,0) circle (0.05cm);
\fill[black] (3/2,0) circle (0.05cm);

\fill[black] (3/2,1/2) circle (0.05cm);
\fill[black] (3/2,1) circle (0.05cm);
\fill[black] (3/2,1.5) circle (0.05cm);

\fill[black] (4/2,0) circle (0.05cm);
\fill[black] (5/2,0) circle (0.05cm);
\fill[black] (6/2,0) circle (0.05cm);

\draw (0,0) -- (1/2,0) node [above,midway] {$6$} ;
\draw (1,0) -- (1/2,0);
\draw (1,0) -- (3/2,0) ;

\draw (5/2,0) -- (6/2,0) node [above,midway] {$6$} ;
\draw (5/2,0) -- (4/2,0);
\draw (4/2,0) -- (3/2,0) ;

\draw (3/2,1/2) -- (3/2,0) ;
\draw (3/2,1/2) -- (3/2,1) ;
\draw (3/2,1.5) -- (3/2,1) node [right,midway] {$6$} ;
\end{tikzpicture} \quad 
 \begin{tikzpicture}
\tikzstyle{every node}=[font=\tiny]
\fill[black] (0,0) circle (0.05cm);
\fill[black] (1/2,0) circle (0.05cm);
\fill[black] (1,0) circle (0.05cm);
\fill[black] (3/2,0) circle (0.05cm);

\fill[black] (3/2,1/2) circle (0.05cm);
\fill[black] (4/2,0.75) circle (0.05cm);
\fill[black] (2/2,0.75) circle (0.05cm);

\fill[black] (4/2,0) circle (0.05cm);
\fill[black] (5/2,0) circle (0.05cm);
\fill[black] (6/2,0) circle (0.05cm);

\draw (0,0) -- (1/2,0) node [above,midway] {$6$} ;
\draw (1,0) -- (1/2,0);
\draw (1,0) -- (3/2,0) ;

\draw (5/2,0) -- (6/2,0) node [above,midway] {$6$} ;
\draw (5/2,0) -- (4/2,0);
\draw (4/2,0) -- (3/2,0) ;

\draw (3/2,0) -- (3/2,1/2) ;

\draw (4/2,0.75) -- (3/2,1/2) ;
\draw (1,0.75) -- (3/2,1/2) ;
\draw (2/2,0.75) -- (4/2,0.75) ;
\end{tikzpicture}
\quad 
\begin{tikzpicture}[
every edge/.style = {draw=black, },
 vrtx/.style args = {#1/#2}{% 
      circle, draw,
      minimum size=1mm, label=#1:#2}
                    ]
\tikzstyle{every node}=[font=\tiny]
\node(A)  [vrtx=above/,scale=0.22,fill=black]  at (0,0) {};
\node(B)  [vrtx=above/,scale=0.22,fill=black]  at (0.5,0.25) {};
\node(C)  [vrtx=above/,scale=0.22,fill=black]  at (0,0.5) {};

\node(D)  [vrtx=above/,scale=0.22,fill=black]  at (1,0.25) {};

\node(E)  [vrtx=above/,scale=0.22,fill=black]  at (1.5,0.25) {};
\node(F)  [vrtx=above/,scale=0.22,fill=black]  at (2,0.5) {};
\node(G)  [vrtx=above/,scale=0.22,fill=black]  at (2,0) {};

\node(H)  [vrtx=above/,scale=0.22,fill=black]  at (1,0.5) {};
\node(I)  [vrtx=above/,scale=0.22,fill=black]  at (1,0.75) {};
\node(J)  [vrtx=above/,scale=0.22,fill=black]  at (1,1) {};

\path  (A) edge (B)
(C) edge (B)
(A) edge (C)
(B) edge (D)
(E) edge (D)
(E) edge (F)
(F) edge (G)
(E) edge (G)
(H) edge (I)
(J) edge (I)
(D) edge (H);
\draw (J) -- (I) node [right,midway] {$6$} ;
\end{tikzpicture}
\\ 
\hline
9 & 
\begin{tikzpicture}[
every edge/.style = {draw=black, },
 vrtx/.style args = {#1/#2}{% 
      circle, draw,
      minimum size=1mm, label=#1:#2}
                    ]
\node(B)  [vrtx=above/,scale=0.22,fill=black]  at (0,0) {};
\node(C)  [vrtx=above/,scale=0.22,fill=black]  at (1,0) {};
\node(D)  [vrtx=above/,scale=0.22,fill=black]  at (0.5,0) {};
\node(E)  [vrtx=above/,scale=0.22,fill=black]  at (1.5,0) {};
\node(F)  [vrtx=above/,scale=0.22,fill=black]  at (2,0) {};
\node(G)  [vrtx=above/,scale=0.22,fill=black]  at (2.5,0) {};
\node(H)  [vrtx=above/,scale=0.22,fill=black]  at (3,0) {};
\node(A)  [vrtx=above/,scale=0.22,fill=black]  at (3.5,0) {};
\node(I)  [vrtx=above/,scale=0.22,fill=black]  at (3,-0.5) {};
\node(J)  [vrtx=above/,scale=0.22,fill=black]  at (3,-1) {};
\node(K)  [vrtx=above/,scale=0.22,fill=black]  at (4,0) {};

\path  (D) edge (B)
(C) edge (B)
(C) edge (D)
(E) edge (C)
(E) edge (F)
(F) edge (G)
(H) edge (G)
(K) edge (A)
(I) edge (H) 
(J) edge (I) 
(A) edge (H) ;

\tikzstyle{every node}=[font=\tiny]
\draw (B) -- (D)node [above,midway] {$6$} ;
\end{tikzpicture}
\quad
\begin{tikzpicture}[
every edge/.style = {draw=black},
 vrtx/.style args = {#1/#2}{% 
      circle, draw,
      minimum size=1mm, label=#1:#2}
                    ]
\tikzstyle{every node}=[font=\tiny]
\node(A)  [vrtx=above/,scale=0.22,fill=black]  at (0,0) {};
\node(B)  [vrtx=above/,scale=0.22,fill=black]  at (0.5,0) {};
\node(C)  [vrtx=above/,scale=0.22,fill=black]  at (1,0) {};
\node(D)  [vrtx=above/,scale=0.22,fill=black]  at (0,0.5) {};
\node(E)  [vrtx=above/,scale=0.22,fill=black]  at (0.5,0.5) {};
\node(F)  [vrtx=above/,scale=0.22,fill=black]  at (1,0.5) {};
\node(G)  [vrtx=above/,scale=0.22,fill=black]  at (0,1) {};
\node(H)  [vrtx=above/,scale=0.22,fill=black]  at (0.5,1) {};
\node(I)  [vrtx=above/,scale=0.22,fill=black]  at (1,1) {};
\node(J)  [vrtx=above/,scale=0.22,fill=black]  at (0,1.5) {};
\node(K)  [vrtx=above/,scale=0.22,fill=black]  at (0.5,1.5) {};
\node(L)  [vrtx=above/,scale=0.22,fill=black]  at (1,1.5) {};
\node(y)  [vrtx=above/,scale=0.22,fill=black]  at (-1.5,0.75) {};
\node(x)  [vrtx=above/,scale=0.22,fill=black]  at (2.5,0.75) {};
\draw (A) -- (B) node[midway,above] {$6$}; 
\path (C) edge (B);
\draw  (D) -- (E)  node[midway,above] {$6$}; 
\path  (E) edge (F);  
\draw (G) -- (H) node[midway,above] {$6$}; 
\path (I) edge (H);
\draw (J) -- (K) node[midway,above] {$6$}; 
\path (L) edge (K)
 (y) edge (A)
 (y) edge (D)
 (y) edge (G)
 (y) edge (J)
 (x) edge (C)
 (x) edge (F)
 (x) edge (I)
 (x) edge (L);
\end{tikzpicture}
 \\
\hline
11 & 
\begin{tikzpicture}[
every edge/.style = {draw=black,},
 vrtx/.style args = {#1/#2}{% 
      circle, draw,
      minimum size=1mm, label=#1:#2}
                    ]
\node(A) [vrtx=above/,scale=0.22,fill=black] at (0, 0) {};
\node(B) [vrtx=above/,scale=0.22,fill=black] at (0.5, 0) {};
\node(C) [vrtx=above/,scale=0.22,fill=black] at (1,0) {};
\node(D) [vrtx=above/,scale=0.22,fill=black] at (1.5,0) {};
\node(E) [vrtx=above/,scale=0.22,fill=black] at (2,0) {};
\node(F) [vrtx=above/,scale=0.22,fill=black] at (0.5,-0.5) {};
\node(G) [vrtx=above/,scale=0.22,fill=black] at (2.5, 0) {};
\node(L) [vrtx=above/,scale=0.22,fill=black] at (3.5, 0) {};
\node(K) [vrtx=above/,scale=0.22,fill=black] at (3, 0) {};
\node(H) [vrtx=above/,scale=0.22,fill=black] at (-0.5, 0) {};

\node(P) [vrtx=above/,scale=0.22,fill=black] at (4, 0) {};
\node(M) [vrtx=above/,scale=0.22,fill=black] at (4.5,0) {};
\node(N) [vrtx=above/,scale=0.22,fill=black] at (5, 0) {};

\path   (A) edge (B)
        (B) edge (C)
 (C) edge (D)
 (D) edge (E)
 (F) edge (B)

 (E) edge (G)
 (K) edge (G)
 (L) edge (K)
 (A) edge (H)
 (P) edge (M)
 (P) edge (N)
 (P) edge (L)
 (M) edge (N)
;
\tikzstyle{every node}=[font=\tiny]
\draw (M) -- (N)node [above,midway] {$6$} ;
\end{tikzpicture}
 \\
\hline 
\end{tabular}
\end{center}
\caption{The ADEG-polyhedra in $\mathbb H^n$}
\label{adeg}
\end{table}

\noindent
The proof of Theorem \ref{thmN9} is provided in Section \ref{Section4}. 

\hip  
Note that two ADEG doubly-truncated simplices in $\mathbb H^5$ 
first appeared in Im Hof's work   \cite{ImHof} and will be recovered by our constructive procedure. 

\hip 

The Coxeter polyhedron $P_{\star} \subset \mathbb H^{9}$   is a new discovery.   It has $14$ facets and  $134$ vertices with $6$ of them being ideal. 
More precisely, the software CoxIter \cite{Gug} provides the  following $f$-vector for $P_{\star}$ 
 \[f_{\star}=(134, 671, 1480, 1909, 1606, 917, 356, 91, 14) \ ,   \]
containing the number of faces of dimension $k$ for $k=0,\dots, n-1$. 

\hip 
Observe that all of its $G_2$-faces have the same combinatorial structure, which is of a pyramid over a product of three simplices of type $\widetilde G_2$; see Remark \ref{remG2}. More generally, the highly symmetric  nature the Coxeter diagram of $P_{\star}$  turns out to be very helpful in verifying the finite-volume criterion of Vinberg stated in Theorem \ref{thmVin2} for $P_{\star}$.

\begin{remark} \normalfont Apart from the two-dimensional case and one tetrahedron, all ADEG-polyhedra give rise to Coxeter groups which are arithmetic over $\mathbb Q$. 
The four cocompact ADEG-triangle groups are arithmetic as well,  but  their field of definition is $\mathbb Q(\sqrt3)$. 
\hip 
In  Section \ref{Section5}, we will discuss the fact that the non-cocompact Coxeter  group $\Gamma_{\star}=\Gamma(P_{\star})$ is commensurable to the ADE-group associated with $P_2\subset\mathbb H^9$,   as well as to all  Coxeter simplex groups and all  Coxeter pyramid groups in $\hbox{Isom}\mathbb H^{9}$.   We will derive 
that the volume of $P_{\star}$  satisfies 
\begin{equation}
\hbox{vol}(P_{\star})= q\cdot \frac{\zeta(5)}{22,295,347,200} \ \text{ with }\ q\in\mathbb Q_{>1} \ .   
\label{volPstarbis}  
\end{equation}
 \end{remark}

\section{Proof of the main theorem}
\label{Section4}

The proof of Theorem \ref{thmN9} relies on different ingredients that we provide in Section \ref{Section4.1}. 
The section concludes with our constructive procedure, outlined in Section \ref{Section4.1.4} . 
We apply this procedure inductively on the dimension  in Section \ref{Section4.2} to derive our main theorem. 
% \ref{thmN9}.  
Some auxiliary data are collected in the Appendices \ref{AppendixA},  \ref{AppendixB} and \ref{AppendixC}.

\subsection{First observations, lemmas and strategy}
\label{Section4.1}

\hip 
Let $P\subset \mathbb H^n$  be an ADEG-polyhedron 
and denote by $\Sigma$ its Coxeter diagram.  

\subsubsection{The simple case} 
\label{Section4.1.1}

\hip 
Assume that $P$ is  simple.   From Theorem \ref{thmFT}, we deduce that $P$ is a simplex,  as Esselmann polyhedra and $P_0\subset\mathbb H^4$ are not ADEG-polyhedra; see \cite{Ess} and Figure \ref{figP0}. 

\hip  Observe that in the compact case, 
there are four ADEG-triangles in total,  and there are no compact ADEG-simplices for $n>2$.
 In the non-compact case,  there are seven ADEG-tetrahedra, and no ADEG-simplices for $n\neq 3$.  

\subsubsection{$G_2$-faces and existence of a subdiagram $\widetilde G_2$} 
\label{Section4.1.2}

From the previous section combined with 
 Corollary \ref{cor}, we assume from now on  that $P\subset \mathbb H^n$ is a non-simple ADEG-polyhedron of dimension $n\geq 5$.  

\hip
 By definition, $P$ admits at least one dihedral angle $\frac{\pi}{6}$. Therefore its Coxeter diagram $\Sigma$ 
contains at least one subdiagram $G_2=G_2^{(6)}$.  

\hip By Theorem \ref{thmAll},  the corresponding $G_2$-face $F$ of codimension $2$ of $P$  is a Coxeter polyhedron. 
More precisely,  in view of Theorem \ref{thmAll2} and  Remark \ref{remG2},  the face $F$ is in fact an ADE- or an ADEG-polyhedron in $\mathbb H^{n-2}$ and its Coxeter diagram $\sigma_F$  is a subdiagram of $\Sigma$.   

\hip
Let us point  out that $F$ is necessarily a non-compact polyhedron,  as ADE-polyhedra and ADEG-polyhedra in dimension $n-2\geq 3$ are all  non-compact; see  Section \ref{Section2.2} and    Section \ref{Section4.1.1}. 
We derive the following lemma. 

\begin{lemma}
Let $n\geq5$, and let $P\subset \mathbb H^n$ be an ADEG-polyhedron. 
Then,  the Coxeter diagram of $P$ contains at least one subdiagram of type $\widetilde G_2$.  
\label{G2}
\end{lemma}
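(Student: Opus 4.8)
The plan is to combine the structural facts already assembled in the excerpt. We start from a non-simple ADEG-polyhedron $P\subset\mathbb H^n$ with $n\geq 5$ and Coxeter diagram $\Sigma$. By the ADEG-definition, $P$ has at least one dihedral angle $\frac{\pi}{6}$, so $\Sigma$ contains a subdiagram $\sigma=G_2^{(6)}=[6]$. Invoking Theorem \ref{thmAll} together with Remark \ref{remG2}, the associated $G_2$-face $F=F(\sigma)$ is itself a Coxeter polyhedron of finite volume in $\mathbb H^{n-2}$, and by Theorem \ref{thmAll2} its dihedral angles are exactly those of the corresponding facets of $P$, so $F$ is an ADE- or ADEG-polyhedron; moreover its Coxeter diagram $\sigma_F$ sits inside $\Sigma$ as a subdiagram disjoint from $\sigma$.

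The next step is to locate an affine subdiagram inside $F$. Since $n\geq 5$ we have $\dim F=n-2\geq 3$, and by the classification results recalled in Sections \ref{Section2.1} and \ref{Section2.2} (Theorem \ref{thmFT} for the simple case, Prokhorov's Theorem \ref{thmPrk} for the ADE-case, and the simple-case discussion in Section \ref{Section4.1.1} for ADEG), every ADE- or ADEG-polyhedron of dimension $\geq 3$ is non-compact. Hence $F$ is non-compact, so it has an ideal vertex, and by Vinberg's correspondence $\sigma_F$ contains an affine subdiagram of rank $n-3$. Under the ADEG-hypothesis the only affine diagrams that can occur are of type $\widetilde A_k,\widetilde D_k,\widetilde E_k$ and $\widetilde G_2$; in particular this affine subdiagram has at least one connected affine component lying in $\sigma_F\subset\Sigma$.

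Finally, we must upgrade "an affine component of $\sigma_F$" to "a $\widetilde G_2$ somewhere in $\Sigma$". Here I would argue by contradiction at the level of $\Sigma$: the affine subdiagram $\tau$ of rank $n-3$ found in $\sigma_F$ is disjoint from $\sigma=[6]$, so $\sigma\cup\tau$ is a subdiagram of $\Sigma$ of rank $n-1$. If none of the components of $\tau$ were of type $\widetilde G_2$, one could examine how $\tau$ extends in $\Sigma$: since $P$ has finite volume, any spherical subdiagram of rank $n-1$ extends in exactly two ways, and $\sigma\cup\tau'$ (for $\tau'$ a rank-$(n-3)$ spherical refinement, or directly using that $\sigma$ attaches to nothing compatible with an affine $A$/$D$/$E$ diagram — an edge labelled $6$ cannot be completed to any affine $\widetilde A,\widetilde D,\widetilde E$, only to $\widetilde G_2$) would force a contradiction with Vinberg's criterion, exactly as in the proofs of Propositions \ref{geq32} and \ref{angles}. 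So some component of the affine subdiagram must be $\widetilde G_2$, i.e. $\Sigma$ contains a subdiagram of type $\widetilde G_2$.

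The main obstacle I anticipate is the last step: making precise why the presence of the label-$6$ edge $\sigma$ together with the affine structure inside $F$ forces a $\widetilde G_2$ rather than merely some affine $\widetilde A$/$\widetilde D$/$\widetilde E$ component. The cleanest route is probably to observe that $F$, being an ADE- or ADEG-polyhedron of dimension $n-2\geq 3$, contains a $\widetilde G_2$-component in its own affine subdiagram whenever it has a $\frac{\pi}{6}$-angle, and to treat the case where $F$ is a pure ADE-polyhedron separately: then $F$ is a simplex, pyramid, $P_1\subset\mathbb H^8$ or $P_2\subset\mathbb H^9$ by Theorem \ref{thmPrk}, and in each of these the affine subdiagrams are explicit, so one checks directly that $\Sigma$ — which contains both $\sigma_F$ and the attaching node(s) realizing the $\frac{\pi}{6}$-angle and hence the edge $[6]$ glued onto a component of $\sigma_F$ — contains a $\widetilde G_2$. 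I would organize the write-up so that the generic argument handles most cases and only this short finite check remains.
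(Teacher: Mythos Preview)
Your first two steps match the paper exactly: locate $\sigma=[6]$ in $\Sigma$, pass to the $G_2$-face $F$, observe that $F$ is an ADE- or ADEG-polyhedron of dimension $n-2\geq 3$ and hence non-compact, and extract an affine subdiagram $\tau=\sigma_F^{\infty}$ of rank $n-3$ inside $\sigma_F\subset\Sigma$.

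The gap is in your third step. You never establish the one fact that finishes the argument cleanly, and your proposed workarounds do not work as stated. The missing observation is this: the affine subdiagram $\tau$ must occur as a union of components of some rank-$(n-1)$ affine subdiagram $\sigma_{\infty}\subset\Sigma$ (this is the content of obstruction~(iii) in Remark~\ref{obstructions}), and crucially $\sigma\subset\sigma_{\infty}$. The reason is geometric: the ideal vertex $v$ of $F$ corresponding to $\tau$ lies in $\overline F$, and $F$ is contained in both hyperplanes of $\sigma$; hence both $\sigma$-facets pass through $v$, so their nodes belong to the affine vertex diagram $\sigma_{\infty}$ of $v$ in $\Sigma$. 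Now $\sigma$ is connected and disjoint from $\tau$, so it sits inside one of the remaining affine components of $\sigma_{\infty}$; a connected affine Coxeter diagram containing an edge labelled $6$ is necessarily $\widetilde G_2$. That is the paper's whole argument, in one line, and it is exactly parallel to the proofs of Propositions~\ref{geq32} and~\ref{angles} you cite.

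Your fallback case analysis cannot be carried out as written: you say the edge $[6]$ is ``glued onto a component of $\sigma_F$'', but by Remark~\ref{remG2} every node adjacent to $\sigma$ is a bad neighbour, so $\sigma$ is not joined to $\sigma_F$ by any edge in $\Sigma$; they are entirely disconnected. Knowing only $\sigma_F$ and a disjoint $[6]$ gives no direct way to read off a $\widetilde G_2$ from the explicit ADE-list. The inductive branch (``if $F$ is ADEG then $F$ already contains a $\widetilde G_2$'') is also circular for $n=5,6$, since $\dim F=3,4$ lies below the hypothesis of the lemma. Drop the case analysis and insert the geometric inclusion $\sigma\subset\sigma_{\infty}$ instead; then the proof is complete and coincides with the paper's.
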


\begin{proof} 
Let $n\geq 5$,  and let $P\subset \mathbb H^n$ be an ADEG-polyhedron with Coxeter diagram  $\Sigma$. Since $P$ is an ADEG-polyhedron, $\Sigma$ contains a subdiagram $\sigma=G_2=[6]$. 

\hip By Theorem \ref{thmAll}, $\sigma$ yields a $G_2$-face $F$ of  dimension $n-2$ which is an ADE- or an ADEG-polyhedron,  and the Coxeter diagram $\sigma_F$ of $F$ is a subdiagram of $\Sigma$ that is disjoint from $\sigma$.  
As $n-2\geq 3$, the face $F$ is necessarily non-compact. Therefore,  $\sigma_F$ contains an affine subdiagram $\sigma_F^{\infty}$ of rank $n-3$ which appears as a component in an affine diagram of rank $n-1$ in $\Sigma$. Now the complement $\Sigma\setminus\sigma_F$  contains $\sigma$,  therefore  the affine subdiagram  of $\Sigma\setminus\sigma_F^{\infty}$ is necessarily a  diagram of type $\widetilde G_2$. 

\end{proof}

By Lemma \ref{G2}, there exists an affine subdiagram $\sigma_{\infty}$ of rank $n-1$ in $\Sigma$ of the form 
\begin{equation}
\sigma_{\infty}=\sigma_1\cup \sigma_2 \cup \dots \cup \sigma_m = \widetilde G_2 \cup \sigma_2 \cup \dots \cup \sigma_m  \ , \ m \geq 2 \ .  
\label{sigmainfty}
\end{equation}
Denote by $r_i$ the rank of $\sigma_i$ so that  
$\sum_{i=1}^m r_i = n-1$. Note that $r_i\geq 2$ for all $i=1,\ldots, m$ as   $P$ does not admit any pair of disjoint facets. 

\hop 

In what follows, we consider $F$ to be the $G_2$-face associated with the subdiagram 
$\sigma=G_2$ of $\sigma_1=\widetilde G_2 \subset \sigma_{\infty}$. 
By what precedes, $F$ is an ADE- or ADEG-polyhedron of dimension $n-2$ and
 its Coxeter diagram $\sigma_F\subset \Sigma\setminus \sigma_1$ 
contains all the affine components $\sigma_2, \ldots, \sigma_m$.

\hip We emphasize that when $n\geq 20$, such a $G_2$-face $F$  is necessarily an ADEG-polyhedron, 
since ADE-polyhedra do not exist anymore in dimensions $\geq 18$; see Theorem \ref{thmPrk}.

\subsubsection{From a $G_2$-face to an admissible set of vectors} \label{Section4.2.3}
\label{Section4.1.3}

The  subdiagram $\sigma_{\infty}$ given in  \eqref{sigmainfty} corresponds to a non-simple ideal vertex $v_{\infty}$ of $P$, which we treat as a lightlike vector in $\mathbb R^{n,1}$.  
Furthermore,  each affine component $\sigma_i$  can be interpreted as the extended Dynkin diagram of a root system $R_i$ of type $A,  D,   E$ or $G$; see \cite{Bourbaki}.  In Appendix \ref{AppendixB}, we provide a short description of these root systems with relevant data for what follows.

\hip 
Let us denote  by $e_1^i, \dots,  e_{r_i}^i$ the natural simple roots of $R_i$ and  by $e_{r_i+1}^i$ the opposite of the highest root of  $R_i$,   with their prescribed lengths as given in Appendix \ref{AppendixB}. 
For instance, for the affine component $\sigma_1=\widetilde G_2$,  the root $e_1^1$, as indexed below,  is taken of square length $2$ and the roots   $ e_2^1$ and $e_3^1$ are taken of  square norm $6$; see Appendix \ref{AppendixB.4}.  

$$
\begin{tikzpicture}
[
every edge/.style = {draw=black},
every vrtx/.style = {fill=black},
 vrtx/.style args = {#1/#2}{% 
      circle, draw,
      minimum size=1mm, label=#1:#2}
                    ]
\tikzstyle{every node}=[font=\scriptsize]
\node(A) [fill=black,vrtx=below/$e_{1}^1$,scale=0.22] at (0, 0) {};
\node(B) [fill=black,vrtx=below/$e_2^1$,scale=0.22] at (1,0) {};
\node(C) [fill=black,vrtx=below/$e_3^1$,scale=0.22] at (2,0) {};
\draw   (A) -- (B) node[midway,above] {$6$}; 
\path        (B) edge (C); 
\end{tikzpicture}
$$

\noindent For any affine component of type $A, D$ or $E$,  all roots are all taken of square length $2$; see Appendix \ref{AppendixB}. 

\hip 
Every vector $e_j^i$, for $i=1,\ldots,m, j=1,\ldots r_{i+1}$, 
 can be interpreted as  a spacelike vector normal to a  hyperplane, denoted by  $H_{e_j^i}$,  that passes through $v_{\infty}$.  
 The vertex $v_{\infty}$ appears therefore as the apex of a polyhedral cone $C_{\infty}\subset \mathbb H^n$ over a product of several simplices, each of dimension $\geq 2$, formed by the intersection of the  hyperplanes 
\[
 \{ H_{e_j^i} \  \mid    \  1\leq i \leq m \ ,  \ 1\leq j \leq r_i+1 \   \} \ . 
\] 
The cone $C_{\infty}$ has {\it infinite} volume in $\mathbb H^n$ and has to be truncated by additional hyperplanes  
 not passing through $v_{\infty}$  in order to yield the finite-volume polyhedron  $P\subset \mathbb H^n$. 

\hip 
The first additional hyperplane(s) that we will detect belong to the boundary of the $G_2$-face $F$ described above. 
This set will be completed by additional hyperplanes subject to an admissibility condition,  inspired from the work of Nikulin \cite{Nikulin2} and  Prokhorov \cite{Prk}; see 
 Definition \ref{admissible}.  We provide the technical details below.  

\hip 
Observe that $P$ is a pyramid with apex $v_{\infty}$ if and only if the boundary of $P$ is obtained by adding a {\it unique} additional hyperplane to the set of hyperplanes $\{H_{e_j^i}\}_{i,j}$. 
As all pyramids are classified (see \cite{Tum3} and \cite{Mcleod}), we discuss in what follows the case where $P$ is not a pyramid. 
Note however that we will recover all ADEG-pyramid through our method; see item 4.i. of Section \ref{Section4.1.4}. 
Therefore, we assume next that the set $\{H_{e_j^i}\}_{i,j}$ 
has to be completed  by at least two additional hyperplanes that do not pass through the vertex $v_{\infty}$.

\hip
As $v_{\infty}$ is Lorentz-orthogonal to all vectors $e_j^i$, and it can be expressed in terms of certain lightlike vectors $v_{\infty}^i$ related to the affine components  $\sigma_i$ for $i=1,\dots m$ as follows. 

\begin{lemma}
For each $1 \leq i \leq m$,   there exist positive integers $c^i_1,\dots, c^i_{r_i+1}$ with $c^i_{r_i+1}=1$ such that the vector $v_{\infty}^i:=c_1^ie_1^i+\dots + c_{r_i+1}^i e_{r_i+1}^i \in \mathbb R^{n,1}$ is collinear with  $v_{\infty}$.  
\label{vi}
\end{lemma}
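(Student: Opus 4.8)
The statement is the classical fact that the fundamental (affine) relation of an extended Dynkin diagram is, up to scaling, the unique null vector in the span of the extended simple roots, with the coefficient on the node indexed by the negative highest root equal to $1$. I would prove it by reducing to the structure theory of the root system $R_i$ of type $A$, $D$, $E$ or $G$, so the argument is carried out component by component and the superscript $i$ can be dropped.

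\textbf{Setup and reduction.} Fix one affine component $\sigma_i=\widetilde R$ of rank $r:=r_i$, with simple roots $e_1,\dots,e_r$ of the finite root system $R$ and $e_{r+1}=-\theta$ the negative of the highest root $\theta$ of $R$, all normalized with the square lengths prescribed in Appendix \ref{AppendixB}. The Gram matrix $G:=(\langle e_a,e_b\rangle)_{a,b=1}^{r+1}$ is, after rescaling rows and columns by the root lengths, precisely the (symmetrized) extended Cartan matrix of $\widetilde R$, which is well known to be positive semidefinite of corank exactly $1$; equivalently, the hyperplane $H_{e_j^i}$ through $v_\infty$ bound a Euclidean simplex, so $\sigma_i$ being affine of rank $r$ means $G$ has a one-dimensional kernel. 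Hence there is, up to scalar, a unique vector $v_\infty^i=\sum_{a=1}^{r+1} c_a^i e_a^i$ with $G\,(c_a^i)_a=0$; since $\langle v_\infty^i, e_b^i\rangle=0$ for all $b$ and the $e_b^i$ together with $v_\infty$ span the degenerate $(r+1)$-dimensional subspace, $v_\infty^i$ must be lightlike and collinear with the radical direction $v_\infty$. The remaining points are: (1) the $c_a^i$ can be chosen to be positive integers, and (2) one may normalize so that $c_{r_i+1}^i=1$.

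\textbf{Positivity and integrality of the coefficients.} Write the highest root in the standard basis of simple roots, $\theta=\sum_{a=1}^r m_a e_a$, where the $m_a$ are the marks (Kac labels) of $R$; these are known positive integers (listed for each of $A,D,E,G$ in, e.g., Bourbaki \cite{Bourbaki}, and displayable directly in the small rank cases at hand, including $\widetilde G_2$ where they are $2,3$). The relation $e_{r+1}=-\theta$ rearranges to $\sum_{a=1}^r m_a e_a + 1\cdot e_{r+1}=0$; testing this against any $e_b$ and using $\langle v_\infty^i,e_b^i\rangle=0$ shows $(m_1,\dots,m_r,1)$ lies in $\ker G$, hence — the kernel being one-dimensional — the vector $v_\infty^i:=\sum_{a=1}^r m_a e_a^i + e_{r+1}^i$ is proportional to $v_\infty$. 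Setting $c_a^i:=m_a$ for $a\le r$ and $c_{r_i+1}^i:=1$ gives positive integers with the required normalization, proving the lemma. (One could equally invoke that $v_\infty^i$ is a positive integer combination because it is the minimal imaginary root $\delta$ of the affine root system $\widetilde R_i$, whose expansion in affine simple roots has all coefficients positive integers and coefficient $1$ on the affine node — but spelling it out via the marks of $R$ is the most self-contained route.)

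\textbf{Main obstacle and finish.} The only genuinely substantive input is the positive-semidefiniteness of the extended Cartan matrix with one-dimensional kernel — but this is exactly the content of ``$\sigma_i$ is an affine Coxeter diagram of rank $r_i$'' together with the classification recalled in Table \ref{sphericaldiag} and \cite{V0, Bourbaki}, so it may be cited. After that, everything is bookkeeping: the marks are positive and the mark of the affine node is $1$ by inspection of the (finitely many) types $\widetilde A,\widetilde D,\widetilde E,\widetilde G$ occurring here. Finally, summing the component relations, $v_\infty=\lambda\sum_{i=1}^m v_\infty^i$ for a suitable scalar $\lambda>0$ (the $v_\infty^i$ all point along the common radical direction of the degenerate form restricted to $\mathrm{span}\{e_j^i\}$, which is the line through $v_\infty$), so each $v_\infty^i$ is indeed collinear with $v_\infty$, as asserted.
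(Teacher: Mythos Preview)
Your argument is correct and is essentially the paper's own: take the $c_j^i$ to be the marks in the expansion of the highest root (so that the coefficient vector lies in the kernel of the affine Gram matrix), observe that $v_\infty^i$ is isotropic and orthogonal to $v_\infty$, and conclude collinearity. The paper phrases the final step simply as the Lorentzian fact that any two Lorentz-orthogonal lightlike vectors in $\mathbb{R}^{n,1}$ are collinear, which is slightly crisper than your detour through the radical of the degenerate $(r{+}1)$-dimensional span; your closing paragraph about summing the $v_\infty^i$ is superfluous, since each $v_\infty^i$ was already shown to lie on $\mathbb{R}v_\infty$ individually.
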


\begin{proof}
Let $c^i_1,\dots,c^i_{r_i}\in \mathbb Z$  be the integers such that 
\begin{equation}
-e_{r_i+1}^i  = c_1^ie_1^i+\ldots+ c_{r_i}^i e_{r_i}^i \ \in \mathbb R^{r_i} \ ;
\label{ci}
\end{equation}
see Appendix  \ref{AppendixB} for their explicit form.    
The vector $v_{\infty}^i=c_1^ie_1^i+\ldots + c_{r_i}^i e_{r_i}^i + e_{r_i+1}^i$ (suitably extended to a non-zero vector in  $\mathbb R^{n,1}$) satisfies  $\langle v_{\infty}^i, v_{\infty}^i \rangle=0$ by properties of the highest root. 
 In addition,  $\langle v_{\infty}^i, v_{\infty} \rangle=0$ as $\langle e_j^i,  v_{\infty}\rangle =0$ for all $j=1, \ldots, r_i+1$. 
As any two  Lorentz-orthogonal lightlike vectors are collinear, this finishes the proof. 
\end{proof}

\hip As a consequence of Lemma \ref{vi}, 
  the vectors $v_{\infty}$ and  $v_{\infty}^i$  
define the same lightlike ray, which we denote by  $ v_{\infty} \sim    v_{\infty}^i$. 
 In particular, for $i=1$, we derive that 
\begin{equation}
 v_{\infty} \sim v_{\infty}^1=3e_1^1+2e_2^1+e_3^1  \ ; 
 \label{v1}
 \end{equation}
see Appendix \ref{AppendixB.4}.

\hop 

Assume that $H_x$ and $H_y$ are two  bounding hyperplanes of $P$  not passing through $v_{\infty}$. 
These hyperplanes are characterized by two spacelike normal vectors $x, y \in \mathbb R^{n,1}$  such that  
 $\langle x,v_{\infty} \rangle \neq 0$ and $\langle y ,v_{\infty}\rangle \neq 0$.   
In this setting, the Lorentzian product $\langle x,y \rangle$ can be expressed in terms  of the Lorentzian products  $k_j^i:=\langle x, e_j^i \rangle$ and $ l_j^i:= \langle y,  e_j^i \rangle$  with the vectors $e_j^i$ for $i=1,\dots,m$, $j=1,\dots,r_i+1$; see \cite{Prk}.  

\hip 
We consider  $x$ and $y$ of squared  norm $2$, and encode them by the following strings
\begin{equation}
\begin{split}
x&\leftrightarrow (k_1^1,k_2^1,k_3^1; k_1^2,\ldots, k_{r_2+1}^2;\dots;k_1^m, \dots, k_{r_m+1}^m ) \\
y&\leftrightarrow (l_1^1, l_2^1, l_{3}^1;l_1^2,\dots ,  l_{r_2+1}^2 ; \dots; l_1^m, \dots, l_{r_m+1}^m)   
\end{split} 
\label{strings}
\end{equation}
where $k_j^i=\langle x, e_j^i \rangle$  and $l_j^i=\langle y, e_j^i \rangle$ for $i=1,\ldots, m$ and $j=1,\ldots,r_i+1$.

\hop 
Since the dihedral angles $\measuredangle (H_x, H_{e_j^i})$  all vary within the set $\{\frac{\pi}{2}, \frac{\pi}{3}, \frac{\pi}{6}\}$,  the quantities  $k_j^i=\langle x, e_j^i \rangle$  are as follows.  
Similar expressions hold for $l_j^i=\langle y, e_j^i \rangle$.

\begin{equation}
\langle x, e_j^i \rangle = \left\{ 
\begin{array}{ll}
0 & \text{ if } \measuredangle (H_x,  H_{e_j^i}) = \frac{\pi}{2}\\ 
-1 & \text{ if } \measuredangle (H_x,  H_{e_j^i}) = \frac{\pi}{3} \text{ and } ||e_j^i||^2=2\\ 
-\sqrt 3  & \text{ if } \measuredangle ( H_x,  H_{e_j^i}) = \frac{\pi}{6} \text{ and } ||e_j^i||^2=2\\ 
-\sqrt 3  & \text{ if } \measuredangle (H_x,  H_{e_j^i}) = \frac{\pi}{3} \text{ and } ||e_j^i||^2=6\\ 
-3  & \text{ if } \measuredangle ( H_x,  H_{e_j^i}) = \frac{\pi}{6} \text{ and } ||e_j^i||^2=6\\  
\end{array}
\right.  
\label{length}
\end{equation}

\hop Consider  the quantities $\Lambda$ and $\Lambda_i$ for $1\leq i \leq m$ defined as follows
\[
\Lambda= \Lambda_{xy} := \frac{\langle x,v_{\infty}\rangle}{\langle y,v_{\infty}\rangle} \ \text{ and } \ \Lambda_i := 
\frac{\langle x, v_{\infty}^i \rangle}{\langle y, v_{\infty}^i \rangle}=\frac{\langle x, \sum_{k=1}^{r_i+1} c_k^ie_k^i\rangle }{\langle y, \sum_{k=1}^{r_i+1} c_k^ie_k^i\rangle } \ , 
\] 
where the lightlike vectors $v_{\infty}^i$ are defined in Lemma \ref{vi}; see also  \cite{Prk}.  
In particular, in view of  the fact that $\sigma_1=\widetilde G_2$, one has  
\begin{equation}
\Lambda_1  =  \frac{3k_1^1+2k_2^1+k_3^1}{3l_1^1+2l_2^1+l_3^1} \ . 
\label{req1}
\end{equation}

\hip 
From  Lemma \ref{vi}, one immediately deduces that     
\begin{equation}
\Lambda_i =\Lambda \ \text{ for all }  i=1,\ldots,m \ . 
\label{lambda}
\end{equation}

\noindent
We are now able to write down Prokhorov's formula \cite{Prk} for the Lorentzian product $\langle x, y \rangle$.  Note that the signs are changed according to our setting.

\begin{lemma}[Prokhorov \cite{Prk}]
\begin{equation}
\langle x , y\rangle =\frac{\langle x,x \rangle}{2\Lambda} +\frac{\langle y,y \rangle}{2}\Lambda  - (\Delta_1+\ldots+\Delta_m) \ ,
\label{xy}
\end{equation}
where, 
 for $p\in \{1, \ldots, m\}$,  
\begin{equation}
\Delta_p = \sum\limits_{1\leq i<j\leq r_p+1}(k_i^pl_j^p+k_j^pl_i^p-\frac{k_i^pk_j^p}{\Lambda} - l_i^pl_j^p\Lambda) s_{ij}^p \ . 
\label{Delta}
\end{equation} 
\noindent
The quantities $s_{ij}^p$  in \eqref{Delta} depend on the fundamental weights $w_1,\dots,w_{r_p}$  of the root system $R_p$ as follows

 \begin{equation}\hspace{-1em}\left\{\begin{array}{lll}s_{ij}^p=&\langle w_i,w_j\rangle-\dfrac{\langle  w_i,w_i\rangle c_j^p}{2c_i^p} - \dfrac{\langle w_j,w_j\rangle c_i^p}{2c_j^p} \quad &\text{ for } 1\leq i, j\leq r_p \ ,  \\  s_{ir_p}^p=&\dfrac{\langle  w_i,w_i\rangle}{2c_i^p}  \quad &\text{ for } i=1, \dots, r_p \ , \\s_{ii}^p =&0  \quad &\text{ for } i=1, \dots, r_p+1 \ ,  \end{array}\right.
 \label{sisj}\end{equation} 
where 
 $c_j^p$ are the integers according to \eqref{ci}. 
\label{Prkformula}
\end{lemma}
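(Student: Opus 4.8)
The strategy is to reproduce Prokhorov's computation \cite{Prk} in coordinates adapted to the affine components $\sigma_1,\dots,\sigma_m$, keeping careful track of the sign conventions of Section \ref{Section1}. First I would fix an auxiliary lightlike vector $u\in\mathbb R^{n,1}$ with $\langle u,v_\infty\rangle=1$ and write $\mathbb R^{n,1}=H\perp E_1\perp\dots\perp E_m$, where $H=\operatorname{span}(u,v_\infty)$ is a hyperbolic plane and $E_p=\operatorname{span}(R_p)\subset v_\infty^{\perp}$ is the positive definite subspace of rank $r_p$ spanned by the finite simple roots $e_1^p,\dots,e_{r_p}^p$; the $E_p$ are mutually orthogonal because $\sigma_\infty$ is a disjoint union. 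In this normalization the affine node is $e_{r_p+1}^p=\lambda_p v_\infty-\theta_p$, with $\theta_p=\sum_{j\le r_p}c_j^p e_j^p$ the highest root of $R_p$ and $\lambda_p\neq0$ chosen so that $v_\infty^p=\lambda_p v_\infty$, in agreement with Lemma \ref{vi}. Expanding $x=\langle x,v_\infty\rangle\,u+\langle x,u\rangle\,v_\infty+\sum_p\bar x_p$ with $\bar x_p\in E_p$, and likewise for $y$, one reads off $\langle x,x\rangle=2\langle x,v_\infty\rangle\langle x,u\rangle+\sum_p\langle\bar x_p,\bar x_p\rangle$ (and the analogue for $y$), together with $\langle x,y\rangle=\langle x,v_\infty\rangle\langle y,u\rangle+\langle x,u\rangle\langle y,v_\infty\rangle+\sum_p\langle\bar x_p,\bar y_p\rangle$.

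Next I would use the two norm relations to solve for $\langle x,u\rangle$ and $\langle y,u\rangle$ and substitute; recalling $\Lambda=\langle x,v_\infty\rangle/\langle y,v_\infty\rangle$, this yields directly
\[
\langle x,y\rangle=\frac{\langle x,x\rangle}{2\Lambda}+\frac{\langle y,y\rangle}{2}\Lambda-\sum_{p=1}^m\Bigl(\frac{\langle\bar x_p,\bar x_p\rangle}{2\Lambda}+\frac{\Lambda}{2}\langle\bar y_p,\bar y_p\rangle-\langle\bar x_p,\bar y_p\rangle\Bigr),
\]
so that $\Delta_p=\tfrac{1}{2\Lambda}\langle\bar x_p-\Lambda\bar y_p,\ \bar x_p-\Lambda\bar y_p\rangle$. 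The vector $\zeta_p:=\bar x_p-\Lambda\bar y_p$ is precisely the $E_p$-component of $x-\Lambda y$, which lies in $v_\infty^{\perp}$ by the very definition of $\Lambda$; hence $\langle\zeta_p,e_i^p\rangle=k_i^p-\Lambda l_i^p=:m_i^p$ for $1\le i\le r_p$, and writing $\zeta_p=\sum_{i\le r_p}m_i^p\,w_i^p$ in the basis of $E_p$ dual to the simple roots gives $\Delta_p=\tfrac{1}{2\Lambda}\sum_{i,j\le r_p}m_i^p m_j^p\langle w_i^p,w_j^p\rangle$.

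It then remains to rewrite this sum symmetrically over $i,j\le r_p+1$ so that the coefficients become those of \eqref{sisj}. From $e_{r_p+1}^p=\lambda_p v_\infty-\theta_p$ and $\langle x,v_\infty\rangle=\Lambda\langle y,v_\infty\rangle$ one gets $m_{r_p+1}^p=-\sum_{j\le r_p}c_j^p m_j^p$, i.e.\ $\sum_{j=1}^{r_p+1}c_j^p m_j^p=0$, and since
\[
k_i^p l_j^p+k_j^p l_i^p-\tfrac{1}{\Lambda}k_i^p k_j^p-\Lambda l_i^p l_j^p=-\tfrac{1}{\Lambda}m_i^p m_j^p,
\]
the claimed value of $\Delta_p$ equals $-\tfrac{1}{2\Lambda}\sum_{i,j\le r_p+1}m_i^p m_j^p s_{ij}^p$. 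Thus I would be left to verify the algebraic identity $\sum_{i,j\le r_p}m_i^p m_j^p\langle w_i^p,w_j^p\rangle=-\sum_{i,j\le r_p+1}m_i^p m_j^p s_{ij}^p$ for all tuples $(m_i^p)$ constrained by $\sum_j c_j^p m_j^p=0$, with $s_{ij}^p$ given by \eqref{sisj}; eliminating $m_{r_p+1}^p$ reduces this to an identity between the Gram entries $\langle w_i^p,w_j^p\rangle$ and the marks $c_j^p$, which can be handled using $\langle w_i^p,\theta_p\rangle=c_i^p$ (equivalently $\langle w_i^p,e_{r_p+1}^p\rangle=-c_i^p$), either uniformly or case by case over the finitely many root systems $A_k,D_k,E_k,G_2$ of Appendix \ref{AppendixB}.

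The first two steps are routine linear algebra inside a hyperbolic plane. The main obstacle is the last one: one must fix the precise normalization of the fundamental weights in Appendix \ref{AppendixB} and match the global sign of each $\Delta_p$ and of every term of \eqref{sisj} against Prokhorov's formulation, whose outward normals point oppositely — this is where most of the bookkeeping lies. Some additional care is needed for the non-simply-laced component $R_p=G_2$, where the roots $e_j^p$ have different squared lengths, so that the basis dual to the simple roots differs from the classical fundamental weights by the coroot factors $\langle e_j^p,e_j^p\rangle/2$; once these conventions are pinned down, the remaining verification of \eqref{sisj} is a finite and mechanical check.
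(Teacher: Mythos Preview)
The paper does not prove this lemma; it is quoted with attribution to Prokhorov \cite{Prk} and then used as a black box, so there is no argument in the paper to compare against. Your plan is a faithful reconstruction of Prokhorov's computation and is sound: decomposing $\mathbb R^{n,1}=H\perp E_1\perp\dots\perp E_m$ with $H=\operatorname{span}(u,v_\infty)$, eliminating $\langle x,u\rangle$ and $\langle y,u\rangle$ via the norm conditions, and recognising $\Delta_p=\tfrac{1}{2\Lambda}\|\bar x_p-\Lambda\bar y_p\|^2$ are precisely the right moves, and the factorisation
\[
k_i^pl_j^p+k_j^pl_i^p-\tfrac{1}{\Lambda}k_i^pk_j^p-\Lambda\,l_i^pl_j^p=-\tfrac{1}{\Lambda}(k_i^p-\Lambda l_i^p)(k_j^p-\Lambda l_j^p)
\]
is the algebraic hinge between the two expressions for $\Delta_p$.

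You are also right that the final symmetrisation over $r_p+1$ indices is where the bookkeeping lives. One concrete caution for when you carry it out: the display \eqref{sisj} as printed is not internally consistent with the explicit values recorded in Appendix \ref{AppendixB}. For instance, for $A_2$ the first line yields $s_{12}=\langle w_1,w_2\rangle-\tfrac12\langle w_1,w_1\rangle-\tfrac12\langle w_2,w_2\rangle=-\tfrac13$, whereas the appendix gives $s_{12}=+\tfrac13$; and the subscript in the second line should read $r_p+1$ rather than $r_p$. These are transcription slips from \cite{Prk}, not defects in your strategy: with the sign of the first line reversed, the identity you isolate,
\[
\sum_{i,j\le r_p}m_i^pm_j^p\langle w_i^p,w_j^p\rangle=-\sum_{i,j\le r_p+1}m_i^pm_j^p\,s_{ij}^p\quad\text{under}\quad\sum_{j=1}^{r_p+1}c_j^pm_j^p=0,
\]
holds uniformly, because after eliminating $m_{r_p+1}^p$ the cross terms involving $\sum_i m_i^p\langle w_i^p,w_i^p\rangle/c_i^p$ cancel exactly. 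Your remark about the $G_2$ component is well taken: there the basis dual to the simple roots differs from the Bourbaki fundamental weights by the factors $\langle e_j^p,e_j^p\rangle/2$, and it is the former that enters the expression $\zeta_p=\sum_i m_i^p w_i^{\prime p}$.
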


\hop  The quantities $s_{ij}=s_{ij}^p$   defined in \eqref{sisj}  
 for a root system $R=R_p$ of type $A_k,  D_k,  E_k$ or $ G_2$  
are given in their explicit form in Appendix \ref{AppendixB}.

\hop 
Since $P$ is an ADEG-polyhedron, we define the following admissibility condition for the pair of vectors $\{x,y\}$, which is necessary for the hyperplanes $H_x$ and $H_y$ to intersect  in a dihedral angle $\frac{\pi}{2}, \frac{\pi}{3}$ or $\frac{\pi}{6}$; see \eqref{lambda} and Lemma \ref{Prkformula}.

\begin{definition}[Admissibility condition] 
For $x, y$ of squared length $2$, the pair of vector $\{x,y\}$  is said to be a    {\it admissible } if 
\begin{equation}
\Lambda_1  =  \frac{3k_1^1+2k_2^1+k_3^1}{3l_1^1+2l_2^1+l_3^1}= \frac{\sum_j c_j^2k_j^2}{\sum_j c_j^2l_j^2}
= \dots = \frac{\sum_j c_j^mk_j^m}{\sum_j c_j^ml_j^m} =\Lambda_m \, , 
\label{req1}
\end{equation}  
and  
\begin{equation}
\langle x,y\rangle=\Lambda+\frac1{\Lambda} - (\Delta_1 +\ldots + \Delta_m)  \in \{0,-1,-\sqrt3\} \ ,  
\label{req2}
\end{equation}
\hip 
where $\Lambda$ and $\Delta_1,  \ldots, \Delta_m$ are given according to \eqref{lambda} and  \eqref{Delta}.  
\label{admissible}
\end{definition}

 \hip 
More generally, we will say that a set  of vectors $\{z_1,  \ldots, z_t\}$ is   {\it admissible} if its elements are pairwise admissible,  that is,  $\{z_i,z_j\}$ is an admissible pair for all $i, j\in\{1,\ldots, t\}$.

\subsubsection{The strategy}
\label{Section4.1.4}

Considering all the above, we will work inductively on the dimension $n\geq 5$ using the constructive procedure described below. We  take into account the knowledge of all ADE-polyhedra, and of all ADEG-polyhedra of dimensions $\leq 4$; see Section \ref{Section4.1.1}.

\begin{enumerate}
\item 
Fix an affine subdiagram $\sigma_{\infty}=\sigma_1\cup\ldots\cup\sigma_m$ of rank $n-1$ such that $\sigma_1=\widetilde G_2$; see Lemma \ref{G2}. The diagram $\sigma_{\infty}$ corresponds to a  non-simple vertex $v_{\infty}$ of a polyhedral candidate $P\subset \mathbb H^n$. 

 \item Fix an ADE- or ADEG-polyhedron $F\subset \mathbb H^{n-2}$ with Coxeter diagram  $\sigma_F$,    to play the role of the  $\sigma$-face $F$ of $P$, for $\sigma:=G_2\subset\sigma_1$.  

\noindent
 If there does not exist an ADE- or ADEG-polyhedron in $\mathbb H^{n-2}$ whose Coxeter diagram contains the affine components  $\sigma_2, \ldots, \sigma_m$, discard this case  and consider a different affine subdiagram $\sigma_{\infty}$ of rank $n-1$; see Section \ref{Section4.1.2}. 

\item Denote by  $x_1,\dots,x_t$ the nodes which,  together with $\sigma_2 \cup \dots \cup \sigma_m$,  constitute the diagram $\sigma_F$.  For simplicity, denote by   $x_1, \ldots, x_t$
 the spacelike vectors,  chosen of squared norm $2$, normal to the corresponding bounding hyperplanes  $H_{x_1},\ldots, H_{x_t}$ of $P$.    

\hip
Observe that if $F$ is a simplex (in which case $m=2$)  or a pyramid (in which case $m>2$),   
one necessarily has $t=1$, and  one has $t\geq 2$ otherwise.

\hip  Each node $x_1, \ldots, x_t$ is encoded by a string of the form 
\begin{equation}
x_i \leftrightarrow (0,0,a_i; k_1^2,\dots k_{r_2+1}^2;\dots;k_1^m, \dots, k_{r_m+1}^m ) \ , 
\label{stringx}
\end{equation}
where we abbreviate  $a_i:=k_3^1= \langle x_i, e_3^1 \rangle  \in \{-\sqrt3, -3\}$; see  \eqref{length}. 
Note that one has  $\langle x_i, e_1^1\rangle = \langle x_i, e_2^1\rangle=0$ by construction, and that 
$a_i=\langle x_i, e_1^1\rangle \neq 0 $ as the complement of $\sigma_F$ in $\Sigma$ is not equal to $\sigma$ otherwise.  
 All remaining coefficients in \eqref{stringx} are fixed by the choice of $F$.  

\item    Assume that $t=1$. 
\begin{enumerate}[i.]
\item 
If for a fixed value of $a:=a_1\in \{-\sqrt3, -3\}$, the hyperplane $H_{x_1}$ combined with the hyperplanes 
$\{H_{e_j^i}\}_{i,j}$ 
bound a finite-volume polyhedron in $\mathbb H^n$,  
then $P$ is necessarily a {\it pyramid} with apex $v_{\infty}$.  
One recovers all ADEG-pyramids in this way.  
\item 
If $P$ is not a pyramid, there is at least one additional hyperplane $H_y=H_{y_1}$ that does not  pass through $v_{\infty}$.
Let $y$ be of squared norm $2$,   set $l_j^i=\langle y, e_j^i \rangle$, and write 
\vspace{-0.3em}
\begin{equation}
y\leftrightarrow (l_1^1, l_2^1, l_{3}^1;l_1^2,\dots ,  l_{r_2+1}^2 ; \dots; l_1^m, \dots, l_{r_m+1}^m)   \ . 
\label{stringy}
\end{equation}
 The corresponding node in $\Sigma$ is a {\it bad} neighbour of $G_2\subset\sigma_1$,  therefore  $l_1^1$ and $l_2^1$ cannot be simultaneously zero.  
\end{enumerate}

\item 
 Assume that $t>1$. 
\begin{enumerate}[i.]
\item The values $a_1,\ldots, a_t$ are completely determined by \eqref{lambda}, since the set of vectors $\{x_1, \ldots, x_s\}$ must be admissible; see Definition \ref{admissible}. 

\hip 
If the hyperplanes $H_{x_1}, \dots, H_{x_t}$ combined with the hyperplanes $\{H_{e_j^i}\}_{i,j}$ 
bound a finite-volume polyhedron in $\mathbb H^n$,  
we have identified all bounding hyperplanes of an ADEG-polyhedron $P$, and its combinatorial-metrical structure is explicit. For this verification, see Theorem \ref{thmVin2} and   Remark \ref{obstructions}. 
\item 
If not, there is at least one additional hyperplane $H_y=H_{y_1}$ that does not  pass through $v_{\infty}$, which can be  encoded as in \eqref{stringy}. 

\end{enumerate}

\item
By means of Prokhorov's formula stated in Lemma \ref{Prkformula},  and with the help of the software Mathematica \cite{Mathematica},  
we search for the set of strings 
\[
\mathcal S_y:=\{ y\leftrightarrow (l_1^1, l_2^1, l_{3}^1;l_1^2,\dots ,  l_{r_2+1}^2 ; \dots; l_1^m, \dots, l_{r_m+1}^m) \  \mid \ \{x_1,\dots, x_t, y\} \text{ admissible} \} \ . 
\]

\item  
For every pairwise admissible vectors $y_1, \dots, y_q \in \mathcal S_y$, 
 we verify if the arrangement of hyperplanes resulting of the admissible set  $\{x_1,\dots,x_t, y_1, \ldots, y_q\}$ bounds a finite-volume polyhedron in $\mathbb H^n$; see Theorem \ref{thmVin2} and  Remark \ref{obstructions}.     
\end{enumerate}

\begin{remark}\normalfont
In practice, the following {\it obstructions}  to the hyperbolicity or the finite-volume of the polyhedral candidates $P$ with Coxeter diagram $\Sigma$  are used repeatedly. 
\begin{enumerate}[i.]
\item $\Sigma$ contains a spherical subdiagram of rank $>n$. 
\item  $\Sigma$ contains a superhyperbolic subdiagram, that is, the signature of its Gram matrix  is $(k,p)$ with $p>1$. 
\item   $\Sigma$ contains an affine subdiagram that is not a component of an affine subdiagram of rank $n-1$.
\item  The set of good neigbours of a subdiagram of type $G_2$ does not constitute the diagram of a finite-volume Coxeter polyhedron in $\mathbb H^{n-2}$; see Theorem \ref{thmAll}. 
\item  Any obstruction to Vinberg's finite-volume criterion stated in Theorem \ref{thmVin2}. 
\end{enumerate}
\label{obstructions}
\end{remark}

\noindent  Observe that when starting the procedure for $n=5$, there are only finitely many possible $G_2$-faces, since ADE-polyhedra exist in finite number, which is also the case of ADEG-polyhedra of dimensions $\leq 4$; see Table \ref{adeg} and Section \ref{Section4.1.1}. Therefore, the constructive procedure  described above yields all but finitely many polyhedral candidates in all dimensions $n\geq 5$, as there are finitely many affine subdiagrams of rank $n-1$, finitely many possible $G_2$-faces, and finitely many possible strings of type \eqref{stringy} to constitue admissible sets of vectors.

\hip 
 A priori,  the procedure has to be performed at least up to dimension $n=19$,  
as there exists an ADE-pyramid in $\mathbb H^{17}$ which which could serve as a $G_2$-face of an ADEG-polyhedron in $\mathbb H^{19}$. 
The  procedure will finish in dimension $n\geq 20$, 
 once in both $\mathbb H^{n-1}$ and $\mathbb H^{n-2}$,  there exist no ADEG-polyhedra.

\subsection{Proof of the main Theorem}  
\label{Section4.2}

\hop We apply the inductive procedure described in Section \ref{Section4.1.4}.  

\hop 
{\bf  Notation.}  From now on and for better readability, we replace $k_j^i$ by its opposite and write $k_j^i =-\langle x, e_j^i \rangle$. Similarly, we denote $l_j^i =-\langle y, e_j^i \rangle$ and $a_i=-\langle x_i, e_1^1\rangle$.  

\subsubsection{Assume that  $\mathbf{n=5}$}

By Lemma \ref{G2}, $\Sigma$ contains an affine subdiagram  $\sigma_{\infty}=\sigma_1\cup\sigma_2$ of rank $4$  %  
that is  either   $\widetilde G_2 \cup \widetilde G_2$  or   $\widetilde G_2 \cup \widetilde A_2$. 
The $G_2$-face $F$ of dimension $3$  is  a finite-volume Coxeter polyhedron whose Coxeter diagram $\sigma_F$ contains $\sigma_2$.   
Therefore, $F$ is one the ten ADE- or ADEG-tetrahedra in $\mathbb H^3$; see Section \ref{Section4.1.1},  Table \ref{adeg} and Table \ref{ADE}. 

\hip Since $F$ is a simplex,  a unique node $x$ is added to $\sigma_2$ to constitue $\sigma_F$. 
Let $x\in \mathbb R^{5,1}$ be the corresponding vector,  $||x||^2=2$, so that $\{H_x,H_{e_1^2},H_{e_2^2}, H_{e_3^2}\}$ constitute the boundary of  $F$. We write 
\begin{equation}
x \leftrightarrow (0,0,a; k_1^2, k_2^2, k_3^2), 
\label{string5}
\end{equation}
where  $a=-\langle x, e_3^1\rangle \in\{\sqrt3,3\}$ and $k_j^2=- \langle x, e_i^2\rangle\in\{0,1,\sqrt3,3\}$.

\hip 
Table \ref{cases5} contains all strings to be considered for the vector $x$,  written as in \eqref{string5}, for all possible $G_2$-faces.  
Notice that in some cases,  the coefficient $a=\sqrt3$ can be excluded.   
 The reason will become clear in part (i) below; see Remark \ref{nopyr}.
We also omit some cases that lead to symmetric configurations.

\begin{table}[!h]
\footnotesize
\begin{center}
\bigskip
\tabcolsep=12pt%
\renewcommand*{\arraystretch}{1.4} 
\begin{tabular}{|c|c|| c| c |}
\hline 
$\sigma_{\infty}$  &  String  & $\sigma_{\infty}$  & String \\ 
\hline
\hline
$\widetilde G_2 \cup \widetilde G_2$ &  $(0,0,3 ;0,0,\sqrt3)$  & $\widetilde G_2 \cup \widetilde A_2$ &  $(0,0,3;1,0,0)$\\
& $(0,0,a ;1,0,0)$ & &  $(0,0,a ;1,0,1)$  \\
&  $(0,0,3 ;0,0,3)$ & &  $(0,0,a ;1,1,1)$ \\
&  $(0,0,a ;0,\sqrt3,0)$ & &  $(0,0,a ;\sqrt3,0,0)$   \\
&  $(0,0,a ;1,0,\sqrt3)$& &   \\
&  $(0,0,a ;1,0,3)$  & & \\
\hline
\end{tabular}
\end{center}
\caption{Strings for dimension $n=5$}
\label{cases5}
\end{table}

\noindent
$\diamond \ \ $  Suppose that   $\sigma_{\infty}=  \widetilde G_2\cup \widetilde G_2$.  Then,    $F$ is one of  the seven ADEG-tetrahedra in $\mathbb H^3$. 
In what follows,  we describe  the two most  important cases, only.  For all other cases,  we give just a few details but display all the admissible pairs in  Table \ref{admiss5}.

\hop 
(i) Assume that $F$ is the Coxeter tetrahedron 
with Coxeter symbol $[3,3,6]$; see the following diagram. 
$$
\begin{tikzpicture}[
every edge/.style = {draw=black},
every vrtx/.style = {fill=black},
 vrtx/.style args = {#1/#2}{% 
      circle, draw,
      minimum size=1mm, label=#1:#2}
                    ]
\tikzstyle{every node}=[font=\tiny]
\node(A) [fill=black,vrtx=below/$e_{1}^1$,scale=0.3] at (0, 0) {};
\node(B) [fill=black,vrtx=below/$e_2^1$,scale=0.3] at (1,0) {};
\node(C) [fill=black,vrtx=below/$e_3^1$,scale=0.3] at (2,0) {};
\node(x) [fill=black,vrtx=below/$x$,scale=0.3] at (3,0) {};
\node(D) [fill=black,vrtx=below/$e_3^2$,scale=0.3] at (4,0) {};
\node(E) [fill=black,vrtx=below/$e_2^2$,scale=0.3] at (5,0) {};
\node(F) [fill=black,vrtx=below/$e_1^2$,scale=0.3] at (6,0) {};
\draw   (A) -- (B) node[midway,above] {6}
       (B) -- (C)
       (D) -- (E)
       (D) -- (x)
 (E) -- (F) node[midway,above] {6};
\end{tikzpicture}
$$

\hip  
We write  
$x \leftrightarrow (0,0, a;0,0,\sqrt3)$ for $a\in \{\sqrt3,3\}$.  

\hop 
A vector $y \leftrightarrow (l_1^1, l_2^1, l_{3}^1; l_1^2, l_{1}^2 , l_3^2)$,  
 $||y||^2=2$,   forms an  admissible pair with $x$ if  
\begin{equation*}
\Lambda_1  =  
\frac{ a}{3l_1^1+2l_2^1+l_3^1}=\frac{\sqrt3}{3l_1^2+3l_2^2+l_3^2} =\Lambda_2 = \Lambda \ ,  
\end{equation*}
\noindent  and  if $
\langle x,y\rangle=\Lambda+\frac1{\Lambda} - (\Delta_1 + \Delta_2)  \in \{0,-1,-\sqrt3\}$
with  $\Delta_1$ and  $\Delta_2$ as in \eqref{Delta}.

\begin{remark}\normalfont 
For $a=\sqrt3$,  the resulting Coxeter diagram corresponds to a  $5$-pyramid of finite-volume.  
In view of Proposition \ref{propsubdiagram}, we can now assume that $a=3$.  
\label{nopyr} 
\end{remark}

\noindent   
As a consequence of Remark \ref{nopyr},  let us assume that $a=3$.  
There is a unique solution so that $\{x,y\}$ is admissible,  and it is  given by 

\[
y\leftrightarrow(l_1^1,l_2^1,l_3^1;l_1^2,l_2^2,l_3^2) = (\sqrt3,0,0; 1,0,0)
\]
for which 
$\Lambda=\frac{1}{\sqrt3}$,  
$\Delta_1=\sqrt3$ and $\Delta_2=\frac{1}{\sqrt3}$, and  
$\langle x,y\rangle=0$. 

\hip 
This configuration corresponds to the Coxeter diagram depicted below 
wherein the red nodes correspond to the vectors  $x$ and $y$.

$$
\begin{tikzpicture}
\tikzstyle{every node}=[font=\tiny]
\fill[black] (0,0) circle (0.05cm);
\fill[black] (1/2,1/2) circle (0.05cm);
\fill[red] (1,1/2) circle (0.05cm);
\fill[black] (1.5,0) circle (0.05cm);
\fill[black] (1.5,-1/2) circle (0.05cm);
\fill[black] (0,-1/2) circle (0.05cm);
\fill[red] (1/2,-1) circle (0.05cm);
\fill[black] (1,-1) circle (0.05cm);

\draw (0,0) -- (1/2,1/2) node [midway,yshift=0.3em,xshift=-0.4em] {$6$} ;
\draw (1,1/2) -- (1/2,1/2);
\draw (1.5,0) -- (1,1/2) node [midway,xshift=0.4em,yshift=0.3em] {$6$} ;
\draw (1.5,0) -- (1.5,-1/2)node [right,midway] {$6$} ;
\draw (1.5,0) -- (1.5,-1/2);
\draw (1,-1) -- (1.5,-1/2) ;
\draw (1,-1) -- (1/2,-1)node [below,midway] {$6$} ;
\draw (1/2,-1) -- (0,-1/2);
\draw (0,0) -- (0,-1/2);
\end{tikzpicture}$$

\hip This diagram corresponds to a finite-volume Coxeter polyhedron in $\mathbb H^5$; see Theorem \ref{thmVin2} and  \cite{ImHof}.

\hop
(ii) Assume that  $F$  is the tetrahedron with  Coxeter symbol $[3,6,3]$.  
The situation can be described with the following diagram. 

\begin{figure}[!h]\centering\begin{tikzpicture}[every edge/.style = {draw=black},every vrtx/.style = {fill=black}, vrtx/.style args = {#1/#2}{       circle, draw,      minimum size=1mm, label=#1:#2}                    ]
\tikzstyle{every node}=[font=\tiny]
\node(A) [fill=black,vrtx=below/$e_{1}^1$,scale=0.3] at (0, 0) {};\node(B) [fill=black,vrtx=below/$e_2^1$,scale=0.3] at (1,0) {};\node(C) [fill=black,vrtx=below/$e_3^1$,scale=0.3] at (2,0) {};\node(D) [fill=black,vrtx=below/$e_3^2$,scale=0.3] at (3,0) {};\node(E) [fill=black,vrtx=below/$e_2^2$,scale=0.3] at (4,0) {};\node(F) [fill=black,vrtx=below/$e_1^2$,scale=0.3] at (5,0) {};\node(x) [fill=black,vrtx=below/$x$,scale=0.3] at (6,0) {};\tikzstyle{every node}=[font=\tiny]\draw   (A) -- (B) node[midway,above] {6}       (B) -- (C)     (D) -- (E)     (D) -- (x)(E) -- (F) node[midway,above] {6};\end{tikzpicture}\end{figure}

\hop
For $x\leftrightarrow(0,0, \sqrt3;1,0,0)$,  there is a unique vector  $y$ such that the pair  $\{x,y\}$ is  admissible.  It is given by $y\leftrightarrow(0, \sqrt3,\sqrt3;1,0,0)$ and yields $\langle x,y\rangle=%-1/\sqrt3 -\sqrt3 + 1/\sqrt3 +\sqrt3=
0$. 
This configuration, depicted below, does not correspond to  a finite-volume $5$-polyhedron.   

$$\begin{tikzpicture}\tikzstyle{every node}=[font=\tiny]\fill[black] (0,0) circle (0.05cm);\fill[black] (1/2,0) circle (0.05cm);\fill[black] (1,0) circle (0.05cm);\fill[red] (3/2,0) circle (0.05cm);\fill[red] (3/2,-1/2) circle (0.05cm);\fill[black] (4/2,0) circle (0.05cm);\fill[black] (5/2,0) circle (0.05cm);\fill[black] (6/2,0) circle (0.05cm);\draw (0,0) -- (1/2,0) ;\draw (1,0) -- (1/2,0) node [above,midway] {$6$} ;\draw (4/2,0) -- (3/2,0) node [above,midway] {$$} ;\draw (3/2,0) -- (2/2,0) node [above,midway] {$$} ;\draw (4/2,0) -- (5/2,0);\draw (6/2,0) -- (5/2,0) node [above,midway] {$6$} ;\draw (3/2,-1/2) -- (1/2,0);\draw (3/2,-1/2) -- (6/2,0);\draw (3/2,-1/2) -- (0,0);\end{tikzpicture}$$

\hip  
In fact, this can be easily deduced from  the Coxeter diagram as  it  contains an affine subdiagram $\widetilde A_2$ which cannot be completed to yield an affine subdiagram of rank $4$.  This contradicts Theorem \ref{thmVin2}.

\hop 
For $ x\leftrightarrow(0,0,3;1,0,0)$, there is again a unique solution  $y\leftrightarrow(1,0,0;0,0,3)$ which satisfies   $\langle x,y\rangle=0$. This  gives rise to the finite-volume Coxeter polyhedron  depicted below. 

$$\begin{tikzpicture}\tikzstyle{every node}=[font=\tiny]
\fill[black] (0,0) circle (0.05cm);
\fill[black] (1/2,1/2) circle (0.05cm);
\fill[red] (1,1/2) circle (0.05cm);
\fill[black] (1.5,0) circle (0.05cm);
\fill[black] (1.5,-1/2) circle (0.05cm);
\fill[black] (0,-1/2) circle (0.05cm);
\fill[red] (1/2,-1) circle (0.05cm);
\fill[black] (1,-1) circle (0.05cm);
\draw (0,0) -- (1/2,1/2) node [midway,yshift=0.3em,xshift=-0.4em] {$6$} ;\draw (1,1/2) -- (1/2,1/2);\draw (1.5,0) -- (1,1/2) node [midway,yshift=0.3em,xshift=0.4em] {$6$} ;\draw (1.5,0) -- (1.5,-1/2);\draw (1.5,0) -- (1.5,-1/2);\draw (1,-1) -- (1.5,-1/2) node [midway,yshift=-0.3em,xshift=0.4em] {$6$} ;\draw (1,-1) -- (1/2,-1);\draw (1/2,-1) -- (0,-1/2) node [midway,yshift=-0.3em,xshift=-0.4em] {$6$} ;\draw (0,0) -- (0,-1/2);\end{tikzpicture}$$

\hop
For all of the remaining cases, the admissible pairs are listed in Table \ref{admiss5}. However,  none of them gives rise to a $5$-polyhedron of finite volume.  
Furthermore, there exists no admissible set of cardinality $>2$.

\begin{table}[!h]
\footnotesize
\begin{center}
\bigskip
\tabcolsep=12pt%
\renewcommand*{\arraystretch}{1.3} 
\begin{tabular}{|c|c|c|c|}
\hline
$\sigma_{\infty}$  & $x$ & $y$ & $\langle x, y \rangle $ \\ 
\hline
\hline
$\widetilde G_2 \cup \widetilde G_2$ & $(0,0, 3;0,0,\sqrt3)$ & $(\sqrt3,0,0;1,0,0)$ & $0$  \\
&$(0,0, \sqrt3;1,0,0)$ & $(1,0,0;0, \sqrt3, \sqrt3)$ & $0$ \\
&$(0,0,3;1,0,0)$ & $(1,0,0;0,0,3)$ & $0$ \\
&$(0,0,3;0,0,3)$ & $(1,0,0;1,0,0)$ & $0$ \\
&$(0,0,3;0,\sqrt3,\sqrt3)$ & $(\sqrt3,0,0;1,3,0)$ & $0$ \\ 
\hline 
\hline 
$\widetilde G_2 \cup \widetilde A_2$ & $(0,0,3;1,0,0)$  & $(\sqrt3,0,0;0,0,\sqrt3)$ & 0 \\
&$(0,0,3;\sqrt3,0,0)$  & $(\sqrt3,0,0;0,0,\sqrt3)$  &  0 \\ 
&$(0,0,3;\sqrt3,0,0)$  &   $(\sqrt3,0,0;0,\sqrt3,0)$ &  0 \\ 
\hline
\end{tabular}
\end{center}
\caption{Admissible pairs $\{x,y\}$ for $n=5$}
\label{admiss5}
\end{table}

\hop
$\diamond \ \ $  In the case where $\sigma_{\infty}=\widetilde G_2\cup \widetilde A_2$,  none of the   admissible pairs yields a finite-volume polyhedron; see Table \ref{admiss5}. 
In addition, we find one admissible set formed by three vectors, encoded by the strings  
$
 \{(0,0,3;1,0,0), (\sqrt3,0,0;0,0,\sqrt3), (0,0,3;\sqrt3,0,0) \}$. 
However,  this configuration does not yield a finite-volume hyperbolic polyhedron. 

\hip  This finishes the proof for $n=5$.

\subsubsection{Assume that $\mathbf{n=6}$}

By means of Lemma \ref{G2},  there is a unique case to consider given by  $\sigma_{\infty}=\widetilde G_2\cup\widetilde A_3$.  
Hence, 
$F$ is one of the two ADE-simplices in $\mathbb H^4$ as there is no ADEG-polyhedra in $\mathbb H^4$; see Section \ref{Section4.1.1} and  Table \ref{ADE}. 

\hop 
Considering Remark \ref{nopyr},  we  assume that  $x\leftrightarrow(0,0,3;1,0,0,0)$ or     $x\leftrightarrow(0,0,a;1,0,1,0)$ for $a\in \{\sqrt3,3\}$.     In both cases,  we find that there is no admissible pair.  
 Hence, apart from a single pyramid,  there are no further  ADEG-polyhedra in $\mathbb H^6$ .

\subsubsection{Assume that  $\mathbf{n=7}$}

From the previous sections, there are eight possibilites for $F$. Namely, there are three ADE-simplices,  one ADE-pyramid,  two ADEG-pyramids and two ADEG-doubly-truncated simplices in  $\mathbb H^5$; see Tables \ref{adeg} and \ref{ADE}.  

\hip 
If  $F$ is a simplex or a pyramid, there is only one vector $x$ to be added to the set  $\{e_j^i\}_{i,j}$ in order to yield $F$. 
 If $F$ is a doubly-truncted simplex, two vectors $x_1$ and $x_2$ are added to $\{e_j^i\}_{i,j}$  to form  $\sigma_F$.  
By taking into account Remark \ref{nopyr},  
we list all corresponding strings to consider  in Table \ref{case7}.

\begin{table}[!h]
\footnotesize
\begin{center}
\bigskip
\tabcolsep=12pt%
\renewcommand*{\arraystretch}{1.3}  
\begin{tabular}{|c|c|}
\hline
$\sigma_{\infty}$ & Strings  \\
\hline
\hline
 $\widetilde G_2\cup \widetilde A_4$    & 
(0,0,3;1,0,0,0,0)
\\ 
%\cline{3-3}
 $\widetilde G_2\cup \widetilde D_4$   & (0,0,3;1,0,0,0,0)
\\ 
 & $(0,0,a;0,1,0,0,0)$
\\
%\cline{2-3}
 $\widetilde G_2\cup \widetilde A_2 \cup \widetilde A_2$  & (0,0,3;1,0,0;1,0,0)
\\ 
 $\widetilde G_2\cup \widetilde G_2 \cup \widetilde A_2$   &  $(0,0,3;0,0,\sqrt3;1,0,0)$
\\ 
%\hline
 $\widetilde G_2\cup \widetilde G_2 \cup \widetilde G_2$  & $(0,0,3;0,0,\sqrt3;0,0,\sqrt3)$\\% & $(\sqrt3,0,0;0,0,3;1,0,0)$ \\ 
%&&& %&$(\sqrt3,0,0;1,0,0;0,0,3)$\\
& $\{x_1\leftrightarrow(0,0,a_1;0,0,3;1,0,0),$  %& $(1,0,0;0,0,3;\sqrt3,0,0)$ for $a_1=\sqrt3$ and $a_2=3$
\\ 
& $x_2\leftrightarrow(0,0,a_2;1,0,0;\sqrt3,0,0)\}$ \\ 
  &$\{x_1\leftrightarrow(0,0,a_1;0,0,\sqrt3;0,0,3),$ %$(0,0,a;0,0,\sqrt3;0,0,\sqrt3)$
\\
& $x_2\leftrightarrow(0,0,a_2;0,0,3;1,0,0)\}$ \\ 
\hline
\end{tabular}
\end{center}
\caption{Strings for dimension $n=7$} 
\label{case7}
\end{table}

\hip 
 In what follows,  we only give the details for the case where  $\sigma_{\infty}=\widetilde G_2 \cup \widetilde G_2 \cup \widetilde G_2$. $F$ is then the pyramid with the Coxeter symbol $[6,3,3,3,3,6]$,  or one of the two doubly-truncated simplices.    For all the other cases,   the admissible pairs are summarized in Table \ref{pairs7}.

\hop 
(i) Assume that $\sigma_F=[6,3,3,3,3,6]$.  
By Remark \ref{nopyr},  
we consider  the vector $x$ with the string 
\[
x\leftrightarrow(0,0,3;0,0,\sqrt3;0,0,\sqrt3) \ .  
\]   
There are two  vectors $y_i$ such that  $\{x,y_i\}$  is an admissible pair,  namely
\[
 y_1\leftrightarrow(\sqrt3,0,0;0,0,3;1,0,0) \ ,    \,  y_2\leftrightarrow(\sqrt3,0,0;1,0,0;0,0,3), 
\]
and one has   $\langle x,y_1 \rangle=\langle x,y_2 \rangle=0$.  
Moreover,   we verify that  $\{y_1,  y_2\}$  is also  an admissible pair.  
However,  none of the admissible sets gives rise to a finite-volume $7$-polyhedron.  

\hop
(ii)  Assume that     
$F$ is the doubly truncated simplex depicted below.  

$$
\begin{tikzpicture}
\tikzstyle{every node}=[font=\tiny]
\fill[black] (0,0) circle (0.05cm);
\fill[black] (1/2,1/2) circle (0.05cm);
\fill[red] (1,1/2) circle (0.05cm);
\fill[black] (1.5,0) circle (0.05cm);
\fill[black] (1.5,-1/2) circle (0.05cm);
\fill[black] (0,-1/2) circle (0.05cm);
\fill[red] (1/2,-1) circle (0.05cm);
\fill[black] (1,-1) circle (0.05cm);

\draw (0,0) -- (1/2,1/2) node [midway,yshift=0.3em,xshift=-0.4em] {$6$} ;
\draw (1,1/2) -- (1/2,1/2);
\draw (1.5,0) -- (1,1/2) node [midway,xshift=0.4em,yshift=0.3em] {$6$} ;
\draw (1.5,0) -- (1.5,-1/2)node [right,midway] {$6$} ;
\draw (1.5,0) -- (1.5,-1/2);
\draw (1,-1) -- (1.5,-1/2) ;
\draw (1,-1) -- (1/2,-1)node [below,midway] {$6$} ;
\draw (1/2,-1) -- (0,-1/2);
\draw (0,0) -- (0,-1/2);
\end{tikzpicture}$$

\vspace{-0.5em}

\noindent
In this case, we can write  for the red nodes 
\[
 x_1\leftrightarrow(0,0,a_1;0,0, \sqrt3;0,0,3)  \text{ and } x_2\leftrightarrow(0,0,a_2;1,0,0;\sqrt3,0,0) \ .
\]

\noindent
Then,  we   determine the coefficients $a_1, a_2 \in\{\sqrt3,3\}$ so that $\{x_1,x_2\}$ is an admissible pair.  As $\Lambda_1=\frac{a_1}{a_2}$
and  $\Lambda_2=\Lambda_3=\frac{1}{\sqrt3}$,  this yields $a_1=\frac{a_2}{\sqrt3}$,  that is,  $a_1=\sqrt3$ and $a_2=3$.  It follows that   $\langle x_1, x_2\rangle=0$.  However, the resulting polyhedron does not have  finite volume.

\hip 
Next, we search for all vectors $y$ such that  $\{x_1,x_2,y\}$  is an admissible set.    The only solution is given by  $y\leftrightarrow(1,0,0;0,0,3;\sqrt3,0,0)$,    and again,  we do not get a finite-volume polyhedron.

\hop (iii) Assume   that  $F$ is the doubly-truncated simplex depicted below.

$$\begin{tikzpicture}\tikzstyle{every node}=[font=\tiny]
\fill[black] (0,0) circle (0.05cm);
\fill[black] (1/2,1/2) circle (0.05cm);
\fill[red] (1,1/2) circle (0.05cm);
\fill[black] (1.5,0) circle (0.05cm);
\fill[black] (1.5,-1/2) circle (0.05cm);
\fill[black] (0,-1/2) circle (0.05cm);
\fill[red] (1/2,-1) circle (0.05cm);
\fill[black] (1,-1) circle (0.05cm);
\draw (0,0) -- (1/2,1/2) node [midway,yshift=0.3em,xshift=-0.4em] {$6$} ;\draw (1,1/2) -- (1/2,1/2);\draw (1.5,0) -- (1,1/2) node [midway,yshift=0.3em,xshift=0.4em] {$6$} ;\draw (1.5,0) -- (1.5,-1/2);\draw (1.5,0) -- (1.5,-1/2);\draw (1,-1) -- (1.5,-1/2) node [midway,yshift=-0.3em,xshift=0.4em] {$6$} ;\draw (1,-1) -- (1/2,-1);\draw (1/2,-1) -- (0,-1/2) node [midway,yshift=-0.3em,xshift=-0.4em] {$6$} ;\draw (0,0) -- (0,-1/2);\end{tikzpicture}$$

\noindent 
We write $
x_1\leftrightarrow(0,0,a_1;1,0,0;0,0,3)  \text{ and } x_2\leftrightarrow(0,0,a_2;0,0,3;1,0,0)$.  

\hip As above, since  $\Lambda_2=\Lambda_3=1$,  this amounts to  $a_1=a_2\in \{\sqrt3,3\}$.  
As a consequence,  $\Delta_1=0$,  and  for each $a:=a_1=a_2\in \{\sqrt3,3\}$,  $\{x_1,x_2\}$ is admissible and such that $\langle x_1, x_2 \rangle = 0$.    
None of the resulting configurations yields a  finite-volume polyhedron.  

\hip
For $a=\sqrt3$,   
we find one vector  $y\leftrightarrow(1,0,0;\sqrt3,0,0;\sqrt3,0,0)$ such that $\{x_1,x_2,y\}$ is admissible. Also, this configuration does not give rise to a finite-volume polyhedron.

\hip
For $a=3$, 
we find all the vectors $z_i$ such that $\{x_1,x_2,z_i\}$ is admissible. They are given by  
\begin{equation*}
\begin{split}
z_1\leftrightarrow(1,0,0;0,0,3;0,0,3) \ , \ &  \, z_2\leftrightarrow(1,0,0;1,0,0;1,0,0) \ ,  \\ 
z_3\leftrightarrow(\sqrt3,0,\sqrt3;&\sqrt3,0,\sqrt3;\sqrt3,0,\sqrt3) \ .  
\end{split}
\end{equation*}
In addition,  $\{z_1,z_2\}$ and  $\{z_1,z_3\}$ are admissible pairs.  
However,  for each of the different admissible sets,  we do not obtain a  finite-volume polyhedron.

\begin{table}[!h]
\footnotesize
\begin{center}
\bigskip
\tabcolsep=2pt%
\renewcommand*{\arraystretch}{1.3}  
\begin{tabular}{|c|c|c|c|}
\hline
$\sigma_{\infty}$ & $x$ & $y$ & $\langle x,y \rangle$ \\  
\hline
\hline 
 $\widetilde G_2\cup \widetilde D_4$ 
 & $(0,0,\sqrt3;0,1,0,0,0)$ & $(0,\sqrt3,\sqrt3;1, 1, 1, 1, 1) $ & 0  \\
\hline
 $\widetilde G_2\cup \widetilde A_2 \cup \widetilde A_2$  & (0,0,3;1,0,0;1,0,0) & $y_1\leftrightarrow(\sqrt3,0,0;0,0,\sqrt3;\sqrt3,0,0) $ & 0 \\ 
& & $y_2\leftrightarrow(\sqrt3,0,0;0,\sqrt3,0;\sqrt3,0,0)$ & 0  \\ 
\hline
 $\widetilde G_2\cup \widetilde G_2 \cup \widetilde A_2$   &  $(0,0,3;0,0,\sqrt3;1,0,0)$ & $(\sqrt3,0,0;1,0,0;\sqrt3,0,0)$ & 0 
\\ 
\hline
\end{tabular}
\end{center}
\caption{The remaining admissible pairs $\{x,y\}$ for $n=7$}
\label{pairs7}
\end{table}

\noindent   For the remaining cases listed in 
Table \ref{pairs7}, let us mention that there is no admissible pair for  $\sigma_{\infty}=\widetilde G_2\cup \widetilde A_4$ and that the pair  $\{y_1,y_2\}$  as given in Table \ref{pairs7} for $\sigma_{\infty}=\widetilde G_2\cup \widetilde A_2\cup \widetilde A_2$  is not admissible. 
Again,  none of those cases  yields a finite-volume polyhedron in $\mathbb H^7$.

\subsubsection{Assume that  $\mathbf{n=8}$}

From the previous sections,   the face $F$ is either one of the two ADE-simplices or one of two ADEG-pyramids existing in $\mathbb H^6$. 
The corresponding strings are listed in Table \ref{cases8}.  

\begin{table}[!h]\footnotesize\begin{center}\bigskip\tabcolsep=12pt\renewcommand*{\arraystretch}{1.2}  \begin{tabular}{|c|c|}\hline$\sigma_{\infty}$ & Strings \\\hline\hline $\widetilde G_2\cup \widetilde A_5$   & $(0,0,a;1,0,0,0,0,0)$\\  $\widetilde G_2\cup \widetilde D_5$  & $(0,0,a;1,0,0,0,0,0)$\\  $\widetilde G_2 \cup \widetilde A_2  \cup \widetilde A_3 $  &  $(0,0,a;1,0,0;1;0,0,0)$ \\  $\widetilde G_2 \cup \widetilde G_2 \cup \widetilde A_3$   & $(0,0,a;0,0,\sqrt3;1;0,0,0)$ \\ \hline\end{tabular}\end{center}\caption{Strings for dimension $n=8$} \label{cases8}\end{table}

\hip
All the admissible pairs are summarized in Table \ref{pairs8}, and we give some details for the case where  $\sigma_{\infty}=\widetilde G_2\cup \widetilde A_5$.  
As there are six admissible pairs of the form $\{x,y_i\}$, we determine which of the vectors $y_1, \dots,  y_6$  form  admissible pairs and  give their Lorentzian products $\langle y_i, y_j\rangle$  in Table \ref{pairwise}.    % 

\begin{table}[!h]
\footnotesize
\begin{center}
\bigskip
\tabcolsep=2pt%
\renewcommand*{\arraystretch}{1.3}  
\begin{tabular}{|c|c|c|c|}
\hline
$\sigma_{\infty}$ & $x$ & $y$ & $\langle x,y \rangle$ \\ 
\hline
\hline
 $\widetilde G_2\cup \widetilde A_5$  & $(0,0,\sqrt3;1,0,0,0,0,0)$ &  $y_1\leftrightarrow(0,\sqrt3,\sqrt3;0,0,0,1,1,1) $&  -1  \\
&& $ y_2\leftrightarrow(0,\sqrt3,\sqrt3;0,1,1,1,0,0)$ &  0  \\ 
&& $ y_3\leftrightarrow(0,\sqrt3,\sqrt3;1,1,0,0,0,1)$ &  0  \\ 
&& $y_4\leftrightarrow(1,3,3;\sqrt3,0,0,\sqrt3,\sqrt3,\sqrt3)$  &$0$ \\ 
&& $ y_5\leftrightarrow(1,3,3;\sqrt3,\sqrt3,\sqrt3,\sqrt3,0,0)$  & $0$ \\ 
&& $y_6\leftrightarrow(\sqrt3,\sqrt3,0;1,1,1,0,0,1)$ & 0 \\ 
\hline
 $\widetilde G_2\cup \widetilde D_5$  & $(0,0,\sqrt3;1,0,0,0,0,0)$ & $(0, \sqrt3, \sqrt3;1, 1, 0, 0, 0, 0)$ & 0  \\
& & $(\sqrt3,\sqrt3,0;1, 1, 0, 1, 1, 0)$ & 0 \\ 
\hline 
 $\widetilde G_2 \cup \widetilde A_2  \cup \widetilde A_3 $  &  $(0,0,3;1,0,0;1;0,0,0)$  & $(\sqrt3,0,0;0,0,\sqrt3;\sqrt3,0,0,0)$ &  0 \\ 
&& $(\sqrt3,0,0;0,\sqrt3,0;\sqrt3,0,0,0)$ &  0 \\ 
\hline 
 $\widetilde G_2 \cup \widetilde G_2 \cup \widetilde A_3$   & $(0,0,\sqrt3;0,0,\sqrt3;1;0,0,0)$ & $(0,\sqrt3,\sqrt3;\sqrt3,0,0;1,1,0,1)$ & -1 \\ 
&& $(\sqrt3,0,0;0,\sqrt3,\sqrt3;1,1,0,1)$ & -1 \\ 
\cline{2-4}
& $(0,0,3;0,0,\sqrt3;1;0,0,0)$  & $(\sqrt3,0,0;1,0,0;\sqrt3,0,0,0)$  & 0 \\ 
\hline
\end{tabular}
\end{center}
\caption{Admissible pairs $\{x,y\}$ for $n=8$} 
\label{pairs8}
\end{table}

\hip 
Empty boxes correspond to inadmissible pairs.   By looking at each admissible set,  we see  that none of them gives rise to a finite-volume polyhedron in $\mathbb H^8$.

\begin{table}[!h]
\footnotesize
\begin{center}
\begin{tabular}{|c|c|c|c|c|c|}
\hline
& $y_2$ & $y_3$ & $y_4$ & $y_5$ & $y_6$ \\ 
\hline 
$y_1$ &  0 & 0 && ${\scriptstyle -\sqrt3}$ & 0\\ 
\hline
$y_2$ & & 0 & ${\scriptstyle -\sqrt3}$ &  &0  \\ 
\hline
$y_3$ & &  & 0 & 0 &  \\ 
\hline
$y_4$ & &  & &  &  0\\ 
\hline
$y_5$ & &  & &  &  0\\ 
\hline
\end{tabular}
\end{center}
\caption{The Lorentzian products $\langle y_i, y_j\rangle$ for $\sigma_{\infty}=\widetilde G_2\cup \widetilde A_5$}
\label{pairwise}
\end{table}

\hip
In all other cases, there are no admissible sets  of cardinality $>2$, and none of the admissible pairs yield a finite-volume polyhedron in $\mathbb H^8$. 

\hop 

\noindent 
{\bf Interlude.} $ \ $ For $n\geq 9$,   the number of admissible pairs increases and admissible sets of big cardinality show up. 
Despite the considerable amount of cases to consider, we find that a single one will 
 provide a  hyperbolic Coxeter polyhedron of finite volume. As announced in Theorem \ref{thmN9}, it will be the polyhedron $P_{\star}\subset \mathbb H^9$. 

\hip 
We will  summarize  most of our findings, apart from a few cases, in Appendix \ref{AppendixC}. 
All admissible pairs $\{x,y_i\}$ together with the Lorentzian products $\langle x,y_j\rangle$ for a given $\sigma_{\infty}$ are listed, as well as  the Lorentzian products $\langle y_i,y_j\rangle$ when the pair $\{y_i, y_j\}$ is admissible and of further relevance; see Table \ref{pairwise}, for example.

\hip  In the following,  we give details for the cases where the $G_2$-face $F$  is the pyramid whose apex $v_{\infty}$ has a vertex link of type  $\widetilde G_2\cup \widetilde G_2 \cup \widetilde G_2$ yielding $P_{\star}\subset \mathbb H^9$, 
and for the cases where $F$ is neither a simplex nor a pyramid for $n> 9$.

\subsubsection{Assume that  $\mathbf{n=9}$}

By means of Lemma \ref{G2} and considering the previous sections,   there are twelve possibilities for  the face $F$.  Namely,  there are three simplices,  five pyramids over a product of two simplices,  and four pyramids over a product of three simplices.  
 The corresponding strings are listed in Table \ref{cases9}; see Remark \ref{nopyr}.

\begin{table}[!h]
\footnotesize
\begin{center}
\bigskip
\tabcolsep=12pt%
\renewcommand*{\arraystretch}{1.2} 
\begin{tabular}{|c|c|}
\hline
$\sigma_{\infty}$ &  Strings \\ 
\hline
\hline
 $\widetilde G_2\cup \widetilde A_6$  &  $(0,0,\sqrt3;1,0,0,0,0,0,0)$   \\ 
 $\widetilde G_2\cup \widetilde D_6$  & $(0,0,a;1,0,0,0,0,0,0)$ \\ 
 $\widetilde G_2\cup \widetilde E_6$  & $(0,0,a;1,0,0,0,0,0,0)$ \\ 
 $\widetilde G_2\cup \widetilde A_3\cup \widetilde A_3$  & $(0,0,a;1,0,0,0;1,0,0,0)$ \\ 
 $\widetilde G_2\cup \widetilde A_2\cup \widetilde A_4$ & $(0,0,a;1,0,0;1,0,0,0)$  \\ 
 $\widetilde G_2\cup \widetilde G_2\cup \widetilde A_4$  & $(0,0,a;0,0,\sqrt3;1,0,0,0,0)$ \\ 
 $\widetilde G_2\cup \widetilde A_2\cup \widetilde D_4$ & $(0,0,a;1,0,0;1,0,0,0)$  \\ 
 $\widetilde G_2\cup \widetilde G_2\cup \widetilde D_4$  & $(0,0,a;0,0,\sqrt3;1,0,0,0,0)$ \\ 
 $\widetilde G_2\cup \widetilde A_2\cup \widetilde A_2\cup \widetilde A_2$  & $(0,0,a; 0,0,1;0,0,1;0,0,1)$  \\ 
$\widetilde G_2\cup \widetilde G_2\cup \widetilde A_2\cup \widetilde A_2$   & $(0,0,a; 0,0,\sqrt3;0,0,1;0,0,1)$ \\ 
 $\widetilde G_2\cup \widetilde G_2\cup \widetilde G_2\cup \widetilde A_2$  & $(0,0,a; 0,0,\sqrt3;0,0,\sqrt3;1,0,0)$   \\ 
 $\widetilde G_2\cup \widetilde G_2\cup \widetilde G_2\cup \widetilde G_2$ & $(0,0,a; 0,0,\sqrt3;0,0,\sqrt3;0,0,\sqrt3)$  \\ 
\hline
\end{tabular}
\end{center}
\caption{Strings for  dimension $n=9$} 
\label{cases9}
\end{table}

\hop 
Remarquable is the case where  $\sigma_{\infty}=\widetilde G_2\cup \widetilde G_2\cup \widetilde G_2\cup \widetilde G_2$, and where  $F$ is the $7$-pyramid over a product of three simplices depicted below.  

$$
\begin{tikzpicture}
\tikzstyle{every node}=[font=\tiny]
\fill[black] (0,0) circle (0.05cm);
\fill[black] (1/2,0) circle (0.05cm);
\fill[black] (1,0) circle (0.05cm);
\fill[red] (3/2,0) circle (0.05cm);
\fill[black] (3/2,1/2) circle (0.05cm);
\fill[black] (3/2,1) circle (0.05cm);
\fill[black] (3/2,1.5) circle (0.05cm);
\fill[black] (4/2,0) circle (0.05cm);
\fill[black] (5/2,0) circle (0.05cm);
\fill[black] (6/2,0) circle (0.05cm);

\draw (0,0) -- (1/2,0) node [above,midway] {$6$} ;
\draw (1,0) -- (1/2,0);
\draw (1,0) -- (3/2,0) ;

\draw (5/2,0) -- (6/2,0) node [above,midway] {$6$} ;
\draw (5/2,0) -- (4/2,0);
\draw (4/2,0) -- (3/2,0) ;

\draw (3/2,1/2) -- (3/2,0) ;
\draw (3/2,1/2) -- (3/2,1) ;
\draw (3/2,1.5) -- (3/2,1) node [right,midway] {$6$} ;
\end{tikzpicture} 
$$

\hip
 As before, we denote $x\leftrightarrow(0,0,a; 0,0,\sqrt3;0,0,\sqrt3;0,0,\sqrt3)$  for $a\in\{\sqrt3,3\}$.   

\hop By applying Theorem \ref{thmAll} to one of the subdiagrams $G_2$ in (the symmetric) diagram $\sigma_F$, one immediately deduces that $a=\sqrt3$.    
We find that  there is a unique and beautiful  admissible pair $\{x, y\}$ where $y$ is given by 
\[
 y\leftrightarrow(1,0,0;1,0,0;1,0,0;1,0,0) \ , 
\]
\noindent
and we have $\langle x, y\rangle=0$.  
The corresponding diagram is depicted below.  

%\vspace{-1em}

$$ \begin{tikzpicture}[
every edge/.style = {draw=black},
 vrtx/.style args = {#1/#2}{% 
      circle, draw,
      minimum size=1mm, label=#1:#2}
                    ]
\tikzstyle{every node}=[font=\tiny]
\node(A)  [vrtx=above/,scale=0.22,fill=black]  at (0,0) {};
\node(B)  [vrtx=above/,scale=0.22,fill=black]  at (0.5,0) {};
\node(C)  [vrtx=above/,scale=0.22,fill=black]  at (1,0) {};
\node(D)  [vrtx=above/,scale=0.22,fill=black]  at (0,0.5) {};
\node(E)  [vrtx=above/,scale=0.22,fill=black]  at (0.5,0.5) {};
\node(F)  [vrtx=above/,scale=0.22,fill=black]  at (1,0.5) {};
\node(G)  [vrtx=above/,scale=0.22,fill=black]  at (0,1) {};
\node(H)  [vrtx=above/,scale=0.22,fill=black]  at (0.5,1) {};
\node(I)  [vrtx=above/,scale=0.22,fill=black]  at (1,1) {};
\node(J)  [vrtx=above/,scale=0.22,fill=black]  at (0,1.5) {};
\node(K)  [vrtx=above/,scale=0.22,fill=black]  at (0.5,1.5) {};
\node(L)  [vrtx=above/,scale=0.22,fill=black]  at (1,1.5) {};
\node(y)  [vrtx=above/,scale=0.22,fill=red]  at (-1.5,0.75) {};
\node(x)  [vrtx=above/,scale=0.22,fill=red]  at (2.5,0.75) {};
\draw (A) -- (B) node[midway,above] {$6$}; 
\path (C) edge (B);
\draw  (D) -- (E)  node[midway,above] {$6$}; 
\path  (E) edge (F);  
\draw (G) -- (H) node[midway,above] {$6$}; 
\path (I) edge (H);
\draw (J) -- (K) node[midway,above] {$6$}; 
\path (L) edge (K)
 (y) edge (A)
 (y) edge (D)
 (y) edge (G)
 (y) edge (J)
 (x) edge (C)
 (x) edge (F)
 (x) edge (I)
 (x) edge (L);
\end{tikzpicture}
$$

\hip By means of Theorem \ref{thmVin2}, we verify that this polyhedron is a Coxeter polyhedron of finite volume in $ \mathbb H^9$, denoted by $P_{\star}$.  For further results about $P_{\star}$ and its associated Coxeter group,  see Section \ref{Section5}.
For all other cases, see  Tables \ref{admissibleG2A6},  \ref{admissibleG2D6},
  \ref{admissibleG2E6}, \ref{admissibleG2G2G2G2}, 
 \ref{admissibleG2G2G2A2},   \ref{admissibleG2G2A2A2},    \ref{admissibleG2A2A2A2}, \ref{admissibleG2G2A4},  \ref{admissibleG2A2A4}, \ref{admissibleG2G2D4},  \ref{admissibleG2A2D4} and  \ref{admissibleG2A3A3} in   Appendix \ref{AppendixC}.

\subsubsection{Assume that  $\mathbf{n=10}$}

There are seven possibilities for $F$.  Namely,  there are three ADE-simplices, three ADEG-pyramids and the ADE-polyhedron $P_1\subset \mathbb H^8$ depicted in Figure \ref{Prk}.  
All corresponding strings are listed in Table \ref{cases10}.

\begin{table}[!h]
\footnotesize
\begin{center}
\bigskip
\tabcolsep=12pt%
\renewcommand*{\arraystretch}{1.2} 
\begin{tabular}{|c|c|}
\hline
 $\sigma_{\infty}$ & Strings \\
\hline
\hline
 $\widetilde G_2\cup \widetilde A_7$ & $(0,0,a;1,0,0,0,0,0,0,0)$  \\ 
 & $\{x_1\leftrightarrow(0,0,a_1;1,1,0,0,0,0,0,0)$, \\ 
& $x_2\leftrightarrow(0,0,a_2;0,0,1,1,0,0,0,0)$,  \\ 
& $ x_3\leftrightarrow(0,0,a_3;0,0,0,0,1,1,0,0)$, \\ 
& $ x_4\leftrightarrow(0,0,a_4;0,0,0,0,1,1,0,0)\}$ \\ 
 $\widetilde G_2\cup \widetilde D_7$  & $(0,0,a;1,0,0,0,0,0,0,0)$ \\ 
 $\widetilde G_2\cup \widetilde E_7$  & $(0,0,a;1,0,0,0,0,0,0,0)$ \\ 
 $\widetilde G_2\cup \widetilde A_3\cup \widetilde D_4$  & $(0,0,a;1,0,0,0;1,0,0,0,0)$  \\ 
 $\widetilde G_2\cup \widetilde A_2\cup \widetilde D_5$  & $(0,0,a;1,0,0;1,0,0,0,0)$ \\ 
 $\widetilde G_2\cup \widetilde G_2\cup \widetilde D_5$  & $(0,0,a;0,0,\sqrt3;1,0,0,0,0)$
 \\ 
\hline
\end{tabular}
\end{center}
\caption{Strings for dimension $n=10$} 
\label{cases10}
\end{table}

\hop
Consider the case $\sigma_{\infty}=\widetilde G_2\cup \widetilde A_7$.  When the face $F$ is  an  ADE-simplex, we refer to Table \ref{admissibleG2A7} in Appendix  \ref{AppendixC}. 
 Of interest here is the case where the face $F$ is given by the ADE-polyhedron $P_1\subset \mathbb H^8$. 
Let us write  
\begin{equation*}
\begin{split}
x_1\leftrightarrow(0,0,a_{1};1,1,0,0,0,0,0,0) \ , & \,  \  x_2\leftrightarrow(0,0,a_{2};0,0,1,1,0,0,0,0) \ ,  \\ 
x_3\leftrightarrow(0,0,a_{3};0,0,0,0,1,1,0,0) \ , & \,  \ x_4\leftrightarrow(0,0,a_{4};0,0,0,0,0,0,1,1) \ , 
\end{split}
\end{equation*}
where  $a_i\in \{ \sqrt3,3\}$ for $i=1,2,3,4$ for the nodes depicted in red in the following diagram.

$$
\begin{tikzpicture}[
every edge/.style = {draw=black, },
 vrtx/.style args = {#1/#2}{% 
      circle, draw,
      minimum size=1mm, label=#1:#2}
                    ]
\node(x)  [vrtx=above/,scale=0.22,fill=red] at (1/2,1/4) {};
\node(y)  [vrtx=above/,scale=0.22,fill=red] at (1/4,-1/2) {};
\node(z)  [vrtx=above/,scale=0.22,fill=red] at (-1/2,-1/4) {};
\node(t)  [vrtx=above/,scale=0.22,fill=red] at (-1/4,1/2) {};
\tkzDefPoint(0,0){O}\tkzDefPoint(1,0){A}
\tkzDefPointsBy[rotation=center O angle 360/8](A,B,C,D,E,F,G){B,C,D,E,F,G,H}
\tkzDrawPoints[fill =black,size=2.2,color=black](A,B,C,D,E,F,G,H)
\tkzDrawPolygon[thin](A,B,C,D,E,F,G,H)
\path   (A) edge (x)
   (B) edge (x)
  (z) edge (x)
(C) edge (t)
   (D) edge (t)
  (t) edge (y)
(E) edge (z)
   (F) edge (z)
(G) edge (y)
   (H) edge (y);
\end{tikzpicture}$$

\noindent
From \eqref{lambda}, the set  $\{x_1,x_2,x_3,x_4\}$ is admissible if and only if  $a_1=a_2=a_3=a_4$.  
Set $a:=a_i$, $1\leq i \leq 4$.   
In both cases,  for  $a=\sqrt3$ and $a=3$,  the corresponding diagram does not encode a finite-volume polyhedron.  
Furthermore,  we do not find any additional vector $y\neq x_i$  such that $\{x_1, x_2, x_3, x_4, y\}$ is admissible.   

\hip 
For all other cases,  see Tables  \ref{admissibleG2A7},  
\ref{admissibleG2D7},   \ref{admissibleG2E7},  \ref{admissibleG2G2D5},   \ref{admissibleG2A2D5} and 
\ref{admissibleG2A3D4} in Appendix \ref{AppendixC}.

\subsubsection{Assume that  $\mathbf{n=11}$}

There are eight possibilities for $F$.  Namely,  there are three ADE-simplices,  three ADEG-pyramids,  the ADE-polyhedron $P_2\subset \mathbb H^9$  and the ADEG-polyhedron $P_{\star}\subset\mathbb H^9$.  
The corresponding strings are listed in Table \ref{cases11}.  

\hip 
In what follows, we provide details in the cases where $F=P_2$ and $F=P_{\star}$.  
For all other cases,  we refer to  Tables   \ref{admissibleG2D8},  \ref{admissibleG2E8},   \ref{admissibleG2D4D4},  \ref{admissibleG2D4D4bis},  \ref{admissibleG2A2E6} and \ref{admissibleG2G2E6}.

\begin{table}[!h]
\footnotesize
\begin{center}
\bigskip
\tabcolsep=6pt%
\renewcommand*{\arraystretch}{1.2} 
\begin{tabular}{|c|c|c|}
\hline
 $\sigma_{\infty}$ & Strings \\
\hline
\hline
 $\widetilde G_2\cup \widetilde A_8$ & $(0,0,a;1,0,0,0,0,0,0,0,0)$\\ 
 & $\{x_1\leftrightarrow(0,0,a_1;1,0,1,0,0,0,0,0,0)$, \\ 
& $x_2\leftrightarrow(0,0,a_2;0,0,0,1,0,1,0,0,0)$,  \\ 
& $ x_3\leftrightarrow(0,0,a_3;0,0,0,0,0,0,1,0,1)\}$, \\ 
  $\widetilde G_2\cup \widetilde D_8$& $(0,0,a;1,0,0,0,0,0,0,0,0)$ \\ 
  $\widetilde G_2\cup \widetilde E_8$ & $(0,0,a;1,0,0,0,0,0,0,0,0)$  \\ 
  $\widetilde G_2\cup \widetilde D_4\cup \widetilde D_4$ & $(0,0,a;1,0,0,0,0;1,0,0,0,0)$   \\ 
  $\widetilde G_2\cup \widetilde A_2\cup \widetilde E_6$   & $(0,0,a;1,0,0;1,0,0,0,0,0,0)$\\ 
  $\widetilde G_2\cup \widetilde G_2\cup \widetilde E_6$  & $(0,0,a;0,0,\sqrt3;1,0,0,0,0,0,0)$. \\ 
 $\widetilde G_2\cup \widetilde G_2\cup \widetilde G_2\cup \widetilde G_2\cup \widetilde G_2$  & 
$\{x_1\leftrightarrow(0,0,a_1;0,0,\sqrt3; 0,0,\sqrt3;0,0,\sqrt3;0,0,\sqrt3)$  \\ 
 & $x_2\leftrightarrow(0,0,a_2;1,0,0;1,0,0;1,0,0;1,0,0)\}$   \\ 
\hline
\end{tabular}
\end{center}
\caption{Strings for dimension $n=11$}
\label{cases11}
\end{table}

\noindent 
(i) Assume that   $\sigma_{\infty}=\widetilde G_2\cup \widetilde A_8$ and that $F=P_2$.   
For $a_1,  a_2, a_3\in\{\sqrt3,3\}$, encode the red nodes as depicted below with the following strings.   
\begin{equation*}
\begin{split}
x_1\leftrightarrow \ &(0,0,a_{1};1,0,1,0,0,0,0,0,0),  \\
x_2\leftrightarrow \ &(0,0,a_{2};0,0,0,1,0,1,0,0,0), \\ 
x_3\leftrightarrow \ &(0,0,a_{3};0,0,0,0,0,0,1,0,1), \\ 
\end{split}
\end{equation*}

$$
\begin{tikzpicture}[
every edge/.style = {draw=black, },
 vrtx/.style args = {#1/#2}{% 
      circle, draw,
      minimum size=1mm, label=#1:#2}
                    ]
\node(x)  [vrtx=above/,scale=0.22,fill=red] at (1/2,1/3) {};
\node(y)  [vrtx=above/,scale=0.22,fill=red] at (1/8,-1/2) {};
\node(z)  [vrtx=above/,scale=0.22,fill=red] at (-1/2,1/4) {};

\tkzDefPoint(0,0){O}\tkzDefPoint(1,0){A}
\tkzDefPointsBy[rotation=center O angle 360/9](A,B,C,D,E,F,G,H){B,C,D,E,F,G,H,I}
\tkzDrawPoints[fill =black,size=2.2,color=black](A,B,C,D,E,F,G,H,I)
\tkzDrawPolygon[thin](A,B,C,D,E,F,G,H,I)

\path   (A) edge (x)
   (C) edge (x)
(D) edge (z)
   (F) edge (z)
(G) edge (y)
   (I) edge (y)
;
\end{tikzpicture}$$

\noindent
As we require $\{x_1,x_2,x_3\}$ to be an admissible set,  we derive that $a:=a_1=a_2=a_3$.   
For both $a=\sqrt3$ and $a=3$, it turns out that the resulting polyhedron does not have finite volume.  

\hip 
  For $a=\sqrt3$,  we find eight additional vectors $y_i$ such that the set $\{x_1, x_2, x_3, y\}$ is admissible; see Table \ref{tableP21}.     In addition,  we give the Lorentzian products $\langle y_i,y_j\rangle$ when $\{y_i,y_j\}$ is an admissible pair in Table \ref{yy}.  
However,   none of  the admissible sets gives rise to a finite-volume polyhedron. 

\hip 
 For $a=3$,  there are six additional vectors; see Table \ref{tableP21}.  None of the admissible sets $\{x_1,x_2,x_3,y_i\}$  leads to a hyperbolic polyhedron of finite volume, and there is no admissible set of cardinality $>4$.

\begin{table}[!h]
\footnotesize
\begin{center}
\tabcolsep=12pt%
\renewcommand*{\arraystretch}{1.2} 
\begin{tabular}{|c|c|c|c|}
\hline
Admissible vectors for $a=\sqrt3$ & $\langle x_1,y_i\rangle$ & $\langle x_2,y_i\rangle$ & $\langle x_3,y_i\rangle$  \\ 
\hline
\hline
$y_1\leftrightarrow(1, 0, 0;0, 0, 0, 0, \sqrt3, 0, 0, \sqrt3, 0)$ & $-\sqrt3$  & 0 & 0  \\ 
$y_2\leftrightarrow(1, 0, 0; 0, 0, 0, \sqrt3, 0, 0, 0, 0, \sqrt3)$  & 0 & 0 & 0 \\ 
$y_3\leftrightarrow(1, 0, 0;0, 0, \sqrt3, 0, 0, 0,  \sqrt3, 0, 0)$ & 0 & 0 & 0 \\ 
$ y_4\leftrightarrow(1, 0, 0;0, \sqrt3, 0, 0, 0, 0, 0, \sqrt3, 0)$& 0 & $-\sqrt3$ & 0 \\ 
$y_5\leftrightarrow(1, 0, 0;0, \sqrt3, 0, 0, \sqrt3, 0, 0, 0, 0)$ & 0 & 0 & $-\sqrt3$ \\
$y_6\leftrightarrow(1, 0, 0;  \sqrt3, 0, 0, 0, 0, \sqrt3, 0, 0, 0)$ & 0 & 0 &0  \\ 
$y_7\leftrightarrow(\sqrt3, 0, 0;0, 1, 1, 0,  1, 1, 0, 1, 1)$& 0 & 0 &  0\\ 
$y_8\leftrightarrow(\sqrt3, 0, 0;1, 1, 0, 1, 1, 0, 1, 1, 0)$& 0 & 0 & 0 \\ 
\hline 
\hline
Admissible vectors for $a=3$ & $\langle x_1,y\rangle$ & $\langle x_2,y\rangle$ & $\langle x_3,y\rangle$  \\ 
\hline
\hline
$ (1, 0, 0; 0, 0, 0, 0, 1, 0, 0, 1, 0)$ & -1 & 0 & 0 \\ 
 $(1, 0, 0; 0, 0, 0, 1, 0, 0, 0,  0, 1)$ & 0 & 0 & 0 \\ 
 $(1, 0, 0;0, 0, 1, 0, 0, 0, 1, 0, 0)$ & 0 & 0 &0 \\ 
 $(1, 0, 0;0, 1, 0, 0, 0, 0, 0, 1, 0)$ & 0& -1 & 0\\ 
 $(1, 0, 0; 0, 1, 0, 0, 1, 0, 0, 0, 0)$ & 0 & 0 &-1 \\ 
 $(1, 0, 0;1, 0, 0, 0, 0, 1, 0, 0, 0)$ & 0 & 0 &0\\ 
\hline 
\end{tabular}
\end{center}
\caption{Admissible sets $\{x_1,x_2,x_3,y_i\}$ for $\sigma_F=P_2$}
\label{tableP21}
\end{table}

\vspace{-1em}

\begin{table}[!h]
\footnotesize
\begin{center}
\begin{tabular}{|c|c|c|c|c|c|c|c|}
\hline
& $y_2$ & $y_3$ & $y_4$ & $y_5$ & $y_6$ & $y_7$ & $y_8$ \\ 
\hline 
$y_1$ &- 1&-1&-1&-1&-1&0&0 \\ 
\hline
$y_2$ &&-1&-1&-1&-1&0&0\\ 
\hline
$y_3$ & & &-1&-1&-1&0&0 \\ 
\hline
$y_4$  & & & &- 1&-1&0&0 \\ 
\hline
$y_5$ &&&&&-1&0&0 \\ 
\hline
$y_6$ &&&&&&0&0 \\ 
\hline
%$y_7$ & \\ 
\end{tabular}
\end{center}
\caption{Lorentzian products  $\langle y_i,y_j\rangle$}
\label{yy}
\end{table}

\noindent (ii) Assume that   $\sigma_{\infty}=\widetilde G_2\cup \widetilde G_2\cup \widetilde G_2\cup \widetilde G_2\cup \widetilde G_2$ and  that $F=P_{\star}$.  
For $a_1, a_2  \in \{\sqrt3, 3\}$,  write 
\begin{equation*}
\begin{split}
x_1&\leftrightarrow(0,0,a_1;0,0,\sqrt3;0,0,\sqrt3;0,0,\sqrt3;0,0,\sqrt3) \ ,  \\
  x_2&\leftrightarrow(0,0,a_2;1,0,0;1,0,0;1,0,0;1,0,0) \ , 
\end{split}
\end{equation*}
for the red nodes as depicted below. 

\vspace{-1em}

$$ \begin{tikzpicture}[
every edge/.style = {draw=black},
 vrtx/.style args = {#1/#2}{% 
      circle, draw,
      minimum size=1mm, label=#1:#2}
                    ]
\tikzstyle{every node}=[font=\tiny]
\node(A)  [vrtx=above/,scale=0.22,fill=black]  at (0,0) {};
\node(B)  [vrtx=above/,scale=0.22,fill=black]  at (0.5,0) {};
\node(C)  [vrtx=above/,scale=0.22,fill=black]  at (1,0) {};
\node(D)  [vrtx=above/,scale=0.22,fill=black]  at (0,0.5) {};
\node(E)  [vrtx=above/,scale=0.22,fill=black]  at (0.5,0.5) {};
\node(F)  [vrtx=above/,scale=0.22,fill=black]  at (1,0.5) {};
\node(G)  [vrtx=above/,scale=0.22,fill=black]  at (0,1) {};
\node(H)  [vrtx=above/,scale=0.22,fill=black]  at (0.5,1) {};
\node(I)  [vrtx=above/,scale=0.22,fill=black]  at (1,1) {};
\node(J)  [vrtx=above/,scale=0.22,fill=black]  at (0,1.5) {};
\node(K)  [vrtx=above/,scale=0.22,fill=black]  at (0.5,1.5) {};
\node(L)  [vrtx=above/,scale=0.22,fill=black]  at (1,1.5) {};
\node(y)  [vrtx=above/,scale=0.22,fill=red]  at (-1.5,0.75) {};
\node(x)  [vrtx=above/,scale=0.22,fill=red]  at (2.5,0.75) {};
\draw (A) -- (B) node[midway,above] {$6$}; 
\path (C) edge (B);
\draw  (D) -- (E)  node[midway,above] {$6$}; 
\path  (E) edge (F);  
\draw (G) -- (H) node[midway,above] {$6$}; 
\path (I) edge (H);
\draw (J) -- (K) node[midway,above] {$6$}; 
\path (L) edge (K)
 (y) edge (A)
 (y) edge (D)
 (y) edge (G)
 (y) edge (J)
 (x) edge (C)
 (x) edge (F)
 (x) edge (I)
 (x) edge (L);
\end{tikzpicture}
$$

\noindent
Since $\{x_1,  x_2\}$ is  admissible pair,   this forces  $a_2$ to be equal to $\sqrt3 a_1$,  that is,  $a_1=\sqrt3$ and $a_2=3$.  We obtain  $\langle x_1, x_2 \rangle=0$.   However,  these data do not yield a finite-volume polyhedron. 

\hip 
Furthermore, we find the following pairwise admissible vectors $y_1, \ldots, y_4$
 such that the triplet  $\{x_1,x_2, y_i\}$ 
is admissible and 
 $\langle x_1,y_i\rangle=\langle x_2, y_i\rangle=0$ for all $i=1,\ldots,4$.  

\begin{equation*}
\begin{split}
&y_1\leftrightarrow(1,0,0;0,0,3;1,0,0;1,0,0;1,0,0),  \, y_2\leftrightarrow (1,0,0;1,0,0;0,0,3;1,0,0;1,0,0),\\
&y_3\leftrightarrow (1,0,0;1,0,0;1,0,0;0,0,3;1,0,0), \, y_4\leftrightarrow(1,0,0;1,0,0;1,0,0;1,0,0;0,0,3) \ 
\end{split}
\end{equation*}
However,  none of the  admissible sets  gives  rise to  a finite-volume hyperbolic polyhedron.

\subsubsection{Assume that $\mathbf{12\leq n \leq 18}$} 

Observe that there is nothing to verify in dimension $n=12$ as there exists neither an ADE-polyhedron nor an ADEG-polyhedron in $\mathbb H^{10}$ which can serve as a $G_2$-face.

\begin{table}[!h]
\footnotesize
\begin{center}
\bigskip
\tabcolsep=10pt%
\renewcommand*{\arraystretch}{1.2} 
\begin{tabular}{|c|c|c|}
\hline
$n$ & $\sigma_{\infty}$ &  Strings \\
\hline
\hline
$13$ & $\widetilde G_2\cup \widetilde A_2\cup \widetilde E_8$  & $(0,0,a;1,0,0;0,0,0,0,0,0,0,0,1)$\\ 
$$ & $\widetilde G_2\cup \widetilde G_2\cup \widetilde E_8$& $(0,0,a;0,0,\sqrt3;0,0,0,0,0,0,0,0,1)$ \\ 
\hline
$14$ & $\widetilde G_2\cup \widetilde A_3\cup \widetilde E_8$ & $(0,0,a;1,0,0,0;0,0,0,0,0,0,0,0,1)$ \\ 
\hline
$15$ & $\widetilde G_2\cup \widetilde D_4\cup \widetilde E_8$ & $(0,0,a;1,0,0,0,0;0,0,0,0,0,0,0,0,1)$ \\ 
\hline
\end{tabular}
\end{center}
\caption{Strings for dimensions $12\leq n \leq 18$} 
\label{allcases}
\end{table}

 \hip    
For dimension $n=13$,  there are two possibilities for  the face $F$: an ADE- or an ADEG-pyramid.  
The corresponding strings are listed in  Table \ref{allcases}.

\hip  If $\sigma_{\infty}=\widetilde G_2\cup \widetilde A_2\cup \widetilde E_8$,  we find no admissible pair, and 
the admissible pairs in the case where  $\sigma_{\infty}=\widetilde G_2\cup \widetilde G_2 \cup \widetilde E_8 $ are listed in Table \ref{admissibleG2G2E8}. None of them yields a finite-volume polyhedron. 
 
\begin{table}[!h] 
\footnotesize
\begin{center}
\renewcommand*{\arraystretch}{1.3} 
\begin{tabular}{|c|c|c|}\hline
  $x$ & $y$  & $\langle x, y \rangle$ \\ 
\hline 
 $ (0,0,\sqrt3;0,0,\sqrt3;0,0,0,0,0,0,0,0,1)$ 
& $(\sqrt3,0,0;\sqrt3,0,0;0,1,0,0,0,0,0,0,0)$ & 0   \\
& $(0,\sqrt3,\sqrt3;\sqrt3,0,0;0,0,0,0,0,0,0,1,1)$ & -1 \\
& $(\sqrt3,0,0;0,\sqrt3,\sqrt3;0,0,0,0,0,0,0,1,1)$ & -1 \\
\hline
 $ (0,0,3;0,0,\sqrt3;0,0,0,0,0,0,0,0,1)$ 
& $(\sqrt3,0,0;1,0,0;0,0,0,0,0,0,0,0,\sqrt3)$ & 0   \\
\hline
\end{tabular}
\caption{Admissible pairs $\{x,y\}$ for  $\sigma_{\infty}=\widetilde G_2\cup \widetilde G_2 \cup \widetilde E_8 $ }
\label{admissibleG2G2E8}
\end{center}
\end{table}

\noindent
For dimensions $n=14$ and $n=15$,  $F$ has to be the unique ADE-pyramid in $\mathbb H^{n-2}$. 
The corresponding strings are given  in  Table \ref{allcases}, and in both cases, we do not find any admissible pair. Hence, there is no ADEG-polyhedron in $\mathbb H^n$.

\hip For dimensions $n=16, 17$  and $18$,  there is neither an ADE-polyhedron nor an ADEG-polyhedron in $\mathbb H^{n-2}$ serving as a $G_2$-face $F$, and this finishes the proof for $n\leq 18$.

\subsubsection{Assume that $\mathbf{n\geq19}$}

In dimension $n=19$,  the single possibility for the $G_2$-face $F$ is given by the unique ADE-polyhedron in $ \mathbb H^{17}$.  

$$
\begin{tikzpicture}
\tikzstyle{every node}=[font=\tiny]
\fill[black] (0,0) circle (0.05cm);
\fill[black] (1/2,0) circle (0.05cm);
\fill[black] (1,0) circle (0.05cm);
\fill[black] (3/2,0) circle (0.05cm);

\fill[black] (4/2,0) circle (0.05cm);
\fill[black] (5/2,0) circle (0.05cm);
\fill[black] (6/2,0) circle (0.05cm);

\fill[black] (7/2,0) circle (0.05cm);
\fill[red] (8/2,0) circle (0.05cm);
\fill[black] (9/2,0) circle (0.05cm);
\fill[black] (10/2,0) circle (0.05cm);

\fill[black] (11/2,0) circle (0.05cm);
\fill[black] (12/2,0) circle (0.05cm);
\fill[black] (13/2,0) circle (0.05cm);
\fill[black] (14/2,0) circle (0.05cm);
\fill[black] (15/2,0) circle (0.05cm);
\fill[black] (16/2,0) circle (0.05cm);

\fill[black] (2/2,1/2) circle (0.05cm);
\fill[black] (14/2,1/2) circle (0.05cm);
\draw (14/2,1/2) -- (14/2,0);
\draw (2/2,1/2) -- (2/2,0);
\draw (0,0) -- (16/2,0);
\end{tikzpicture} 
$$

\hop
In this case, $\sigma_{\infty}=\widetilde G_2 \cup \widetilde E_8 \cup \widetilde E_8$,  and 
we write  
\[
x\leftrightarrow(0,0,a;0,0,0,0,0,0,0,0,1;0,0,0,0,0,0,0,0,1) \ , \ a\in\{\sqrt3,3\} \ . 
\]

\hip 
For $a=3$,  there is a unique admissible pair which, however, does not yield a finite-volume polyhedron; see Table \ref{pairs19}.  

\hip 
For $a=\sqrt3$, there are $26$ vectors $y$ such that $\{x,y\}$ is an admissible pair, listed in Table \ref{pairs19}.  
Apart from a few exceptions,  each admissible set gives rise to a spherical subdiagram of rank $>19$, and all remaining ones do not yield a finite-volume polyhedron.   This finishes the proof for $n=19$.

\hop 
Finally, the inductive procedure stops since there are no  
ADE-polyhedra and no ADEG-polyhedra both in dimension $18$ and dimension $19$. 
This finishes the proof of Theorem \ref{thmN9}.

$\hfill\qed$

\section{Arithmeticity and commensurability}
\label{Section5}

To conclude this work, we  disccus some more aspects of ADEG-polyhedra and their associated geometric Coxeter groups which we will call ADEG-groups; see \eqref{presentation}.  

\hip 
For the broad theory of arithmetic  groups which we apply here in the restricted context of hyperbolic Coxeter groups, we refer to \cite{Maclachlan, MR, V0}. 
Let us recall that two discrete groups $\Gamma_1, \Gamma_2$ in $ \hbox{Isom}\mathbb H^n$ are said to be  {\it commensurable (in the wide sense)}  if  there is an element $\gamma\in\hbox{Isom}\mathbb H^n$
such that $\,\Gamma_1\cap \gamma\Gamma_2\gamma^{-1}$ has finite index in both $\Gamma_1$ and $\gamma\Gamma_2\gamma^{-1}$. 
By a fundamental result of  Margulis, it is known that a cofinite group  $\Gamma$ in $\hbox{Isom}\mathbb H^n$ for $n\ge3$ is {\it arithmetic} if and only if its commensurator
\begin{equation*}
\hbox{Comm}(\Gamma)=\{\,\gamma\in\Isom\mathbb H^n\,\mid\, \Gamma \hbox{ and } \gamma\Gamma\gamma^{-1} \hbox{ are commensurable}\,\}
\end{equation*}
is dense in $\hbox{Isom}\mathbb H^n$; see \cite{Zimmer} for example.

\hip 
In dimension $n=2$,  due to the work of Takeuchi \cite[Appendix 13.3]{MR}, 
all discrete triangle-groups are classified up to arithmeticity, and the four compact ADEG-triangle groups are arithmetic defined over $\mathbb Q(\sqrt3)$.

\hip 
For $n\geq 3$, all ADEG-groups in  $\hbox{Isom}\mathbb H^n$ are non-cocompact; see Sections \ref{Section3} and \ref{Section4}. In such a setting, 
the arithmeticity property can be characterized in an easy way thanks to Vinberg's arithmeticity criterion; see \cite{V2}. 
In fact, a non-cocompact Coxeter group $\Gamma$ is arithmetic (and defined over $\mathbb Q$) if and only if all {\it cycles} in the matrix  $2Gr(\Gamma)$ are rational integers. 
We therefore easily verify that all (ADE- and) ADEG-groups in $\hbox{Isom}\mathbb H^n$ for $n\geq 3$ are arithmetic.

\hip 
In addition, from 
 Emery's work  \cite{Emery}, for non-cocompact arithmetic groups acting on a hyperbolic space of {\it odd} dimension $\geq 5$, their covolume is a rational multiple of Riemann's zeta function evaluated at $5$, denoted $\zeta(5)$. 
In particular, this applies to the ADE- and ADEG-groups  $\Gamma_{\star}=\Gamma(P_{\star})$ and  $\Gamma(P_2)$ associated with the polyhedra $P_{\star}$ and $P_2$ in $\mathbb H^9$.

\hop 

Let us consider the Coxeter simplex group $\Gamma_9$ in $\hbox{Isom}\mathbb H^9$ depicted in Figure \ref{Gamma9}. 
 
\vspace{0.3em} 
\begin{figure}[!h]
\centering
\begin{tikzpicture}
\fill[black] (1/2,0) circle (0.05cm);
\fill[black] (2/2,0) circle (0.05cm);
\fill[black] (3/2,0) circle (0.05cm);
\fill[black] (4/2,0) circle (0.05cm);
\fill[black] (5/2,0) circle (0.05cm);
\fill[black] (6/2,0) circle (0.05cm);
\fill[black] (7/2,0) circle (0.05cm);
\fill[black] (8/2,0) circle (0.05cm);
\fill[black] (9/2,0) circle (0.05cm);
\fill[black] (3/2,1/3) circle (0.05cm);
\draw (1/2,0) -- (3/2,0) ;
\draw (4/2,0) -- (3/2,0) ;
\draw (4/2,0) -- (5/2,0) ;
\draw (6/2,0) -- (5/2,0) ;
\draw (6/2,0) -- (7/2,0) ;
\draw (7/2,0) -- (8/2,0) ;
\draw (9/2,0) -- (8/2,0) ;
\draw (3/2,1/3) -- (3/2,0) ;
\end{tikzpicture}
\caption{The Coxeter simplex group $\Gamma_9\subset \hbox{Isom}\mathbb H^9$}
\label{Gamma9}
\end{figure}
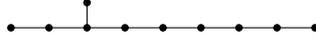

\hip 
The Coxeter group $\Gamma_9$  is arithmetic (and defined over $\mathbb Q$) as well, 
and it is distinguished by 
the property that among {\it all} cusped hyperbolic $9$-orbifolds,  
it has minimal covolume and it is unique as such; see the work of Hild \cite{Hild}.  
More precisely, its covolume has been computed  in \cite{JKRT}  and is  given by 
\begin{equation}
\hbox{covol}(\Gamma_9)= \frac{\zeta(5)}{22,295,347,200} \ . 
\label{volGamma9}
 \end{equation}

\vspace{0.3em} 
\begin{prop}
The Coxeter groups $\Gamma_{\star}$,  $\Gamma(P_2)$  and  $\Gamma_9$  in $\hbox{Isom}\mathbb H^9$ are pairwise commensurable. 
\label{commens}
\end{prop}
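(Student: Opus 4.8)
The plan is to establish the three pairwise commensurabilities by exhibiting, for each of the groups $\Gamma_\star$, $\Gamma(P_2)$ and $\Gamma_9$, an explicit common ambient arithmetic commensurability class, and then invoking Margulis' characterization of arithmeticity together with the classification of commensurability classes of arithmetic non-cocompact Coxeter groups over $\mathbb Q$. First I would recall, from the discussion preceding the proposition, that all three groups are arithmetic and defined over $\mathbb Q$; moreover each acts on $\mathbb H^9$, so the relevant invariant is the $\mathbb Q$-isomorphism class of the associated quadratic form of signature $(9,1)$. Since $9$ is odd and the groups are non-cocompact over $\mathbb Q$, the quadratic space is determined up to scaling and isometry over $\mathbb Q$ by its local invariants, and all such forms representing $0$ nontrivially (which they do, being non-cocompact, hence isotropic) are in fact $\mathbb Q$-equivalent up to similarity to the standard form $x_1^2+\dots+x_9^2-x_{10}^2$; by a theorem of Maclachlan (or the Hasse--Minkowski classification in $n+1 = 10 \geq 5$ variables together with the fact that over $\mathbb Q$ there is a unique similarity class of isotropic forms of a given odd rank-minus-one and trivial Hasse invariant at all but finitely many places), all non-cocompact arithmetic Coxeter groups in $\hbox{Isom}\mathbb H^9$ defined over $\mathbb Q$ lie in a single commensurability class. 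This immediately yields the pairwise commensurability of $\Gamma_\star$, $\Gamma(P_2)$ and $\Gamma_9$.

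Concretely, the steps I would carry out are as follows. (1) Verify arithmeticity over $\mathbb Q$ for each group via Vinberg's cycle criterion, which for $\Gamma_9$ and $\Gamma(P_2)$ is already noted in the text, and for $\Gamma_\star$ follows because $2\,\mathrm{Gr}(P_\star)$ has all entries in $\{0,-1,-2,-3\}$ and every cycle is a product of such, hence a rational integer; this uses the explicit Gram matrix read off from Figure~\ref{N9}. (2) Identify, for each group, the associated quadratic form: from the Gram matrix of each polyhedron one obtains a symmetric bilinear form of signature $(9,1)$ over $\mathbb Q$ (after clearing the $\sqrt3$'s, which only occur in the off-diagonal $\frac{\pi}{6}$-entries as $-\sqrt3$ but enter the cycles/products squared), and one checks it is isotropic since each polyhedron is non-cocompact. (3) Invoke the arithmetic invariant: two arithmetic groups defined over $\mathbb Q$ in $\hbox{Isom}\mathbb H^n$ are commensurable if and only if their defining quadratic forms over $\mathbb Q$ are similar; and for isotropic forms of rank $10$ over $\mathbb Q$, there is a single similarity class with trivial Hasse invariant everywhere, which all three realize. (4) Conclude all three are commensurable with the standard group, hence with each other.

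I expect the main obstacle to be step (3) — carefully pinning down \emph{which} quadratic forms arise and checking they land in the same similarity class over $\mathbb Q$, rather than merely the same genus or the same class over $\mathbb R$. The subtlety is that commensurability of arithmetic groups is governed by the $\mathbb Q$-isometry class of the form \emph{up to similarity}, and one must confirm that the Hasse--Witt invariants and the discriminant (up to squares) of the forms coming from $\mathrm{Gr}(P_\star)$, $\mathrm{Gr}(P_2)$ and the simplex $\Gamma_9$ all coincide. For forms of rank $\geq 5$ over $\mathbb Q$ that are isotropic, the Hasse--Minkowski theorem forces a very rigid structure, and isotropy already kills most of the local obstructions; still, one should record the explicit computation that each form is $\mathbb Q$-equivalent, after rescaling, to $\langle 1,1,1,1,1,1,1,1,1,-1\rangle$. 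An alternative, possibly cleaner route avoiding form computations altogether is the following: $\Gamma_9$ is the minimal-covolume cusped $9$-orbifold group and is known to lie in the commensurability class of the rank-$10$ split orthogonal $\mathbb Q$-group; it then suffices to show that both $\Gamma_\star$ and $\Gamma(P_2)$ are arithmetic over $\mathbb Q$ and non-cocompact, because \emph{every} such group in dimension $9$ belongs to that single class — and this is exactly what the text has already prepared via Vinberg's criterion. I would present the proof along this second route, citing \cite{V2, Maclachlan, MR} for the uniqueness of the commensurability class, and citing \cite{Hild, JKRT} for the identification of $\Gamma_9$, thereby keeping the argument short and conceptual rather than computational.
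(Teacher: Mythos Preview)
Your proposal contains a genuine gap in the ``second route'' you ultimately choose. The claim that \emph{every} non-cocompact arithmetic group over $\mathbb Q$ in $\hbox{Isom}\mathbb H^9$ lies in a single commensurability class is false. In dimension $n=9$ the associated quadratic form has rank $n+1=10$, which is even; scaling by $\lambda$ therefore multiplies the determinant by $\lambda^{10}\in(\mathbb Q^*)^2$, so the discriminant modulo squares is a genuine similarity invariant. Forms such as $\langle 1,\dots,1,-1\rangle$ and $\langle 1,\dots,1,-p\rangle$ (for $p$ prime) are both isotropic of signature $(9,1)$ over $\mathbb Q$ but are \emph{not} similar, and they give rise to non-commensurable arithmetic lattices. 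Isotropy (non-cocompactness) buys you nothing here: by Meyer's theorem every indefinite form of rank $\geq 5$ over $\mathbb Q$ is isotropic, so this condition does not cut down the set of similarity classes at all. Your parenthetical ``trivial Hasse invariant at all but finitely many places'' is likewise vacuous, since that holds for any form.

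Your ``first route'' is the correct one, and it is precisely what the paper does: invoke Maclachlan's complete set of commensurability invariants for odd $n$ --- the field of definition, the signed determinant of the Vinberg form in $\mathbb Q^*/(\mathbb Q^*)^2$, and the ramification set of the quaternion algebra representing the Witt invariant --- and then \emph{compute} that all three groups yield the same triple $\{\mathbb Q,\,1,\,\emptyset\}$. The paper cites \cite{GJK} for the invariants of $\Gamma_9$ and defers the computation for $\Gamma_\star$ and $\Gamma(P_2)$ to the author's thesis. You cannot avoid this computation; the whole content of the proposition lies in verifying that the invariants match. (A minor side remark: the off-diagonal entries of $2\,\mathrm{Gr}(P_\star)$ are $0,-1,-\sqrt3$, not $\{0,-1,-2,-3\}$ as you wrote; the arithmeticity via Vinberg's criterion requires checking that every cycle contains an even number of $\sqrt3$-factors, which is true but not immediate from your description.)
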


\begin{proof}
By Maclachlan's work \cite{Maclachlan}, there is a {\it complete} set of commensurability invariants for arithmetic subgroups of $\hbox{Isom}\mathbb H^n$. 
This work has been exploited by Guglielmetti,  Jacquemet and  Kellerhals  \cite{GJK} who established these invariants for Coxeter simplex groups and Coxeter pyramid groups, 
and in particular  for the group $\Gamma_9\subset \hbox{Isom}\mathbb H^9$.  
Hence, it is sufficient to show that the sets of invariants for 
$\Gamma_{\star}$ and  $\Gamma(P_2)$ coincide with the one of $\Gamma_9$. 

\hip 
In this odd-dimensional setting, the complete set of commensurability invariants is given by $\{\mathbb Q,  1, \emptyset\}$  where $\mathbb Q$ is the field of definition of the group,  $1$ is the signed determinant  in $\mathbb Q/\mathbb Q^2$ of its Vinberg form $q$, and  $\hbox{Ram}_q(B)=\emptyset$ is the ramification set for a quaternion algebra  representing the Witt invariant  $c(q)$ of $q$. 
We refer to this author's thesis  \cite{Bredon} for the computational details of the Vinberg forms as well as of these invariants.  
We conclude that  all three groups $\Gamma_9$, $\Gamma_{\star}$ and  $\Gamma_{P_2}$ are pairwise commensurable.
\end{proof}

\hip 
From Proposition \ref{commens},  \eqref{volGamma9} and minimality of the covolume of $\Gamma_9$,  we  derive that 
\begin{equation}
\hbox{vol}(P_{\star})= q\cdot \frac{\zeta(5)}{22,295,347,200}    \ \text{ with }\ q\in\mathbb Q_{>1} \ . 
\label{volPstarbis}  
\end{equation}
A similar expression holds for $\hbox{vol}(P_2)$.

\begin{remark}\normalfont 
From the works  \cite{GJK,JKRT2} and Propostion \ref{commens}, we derive that the commensurability class of $\Gamma_{\star}$  contains in fact all  Coxeter simplex groups and all Coxeter pyramid groups  in $\hbox{Isom}\mathbb H^9$.   
\end{remark}

\clearpage

\appendix

\section{-- The ADE-polyhedra in $\mathbb H^n$}
\label{AppendixA}

\begin{table}[!h]
\footnotesize
% [inline block 0: 1 envs, 28603 chars -> data_tex | \begin{tabular}{c|c} $n$ &\\...]

\caption{The ADE-polyhedra in $\mathbb H^n$}
\label{ADE}
\end{table}

\clearpage

\section{-- Data for roots systems of types $A, D, E$ and $G$}
\label{AppendixB}

We provide a short description of the root systems  of type  
$ A,   D,  E$ and $ G$, together with 
the quantities  $s_{ij}:=s_is_j$ involved in  Prokhorov's formula  stated in  Lemma \ref{Prkformula}. 

\hip For the $ A,  D$ and $ E$ types, they can be found in \cite{Prk}, and they are easily computed in view of  \eqref{sisj} for $\widetilde G_2$ (and for other types).  For general data about the root systems involved, we refer to Bourbaki \cite{Bourbaki}. 

\hip 
Denote by  $\epsilon_1, \ldots, \epsilon_{n+1}$ the canonical basis of  $\mathbb R^{n+1}$. 

\subsection{The root system $ A_n$} 

Let $V$ be the subspace of vectors in $\mathbb R^{n+1}$ whose coordinates sum up to $0$.  
The $A_n$-root system is the set of vectors in $V$ of length $\sqrt2$ with integer coordinates given by

\[
\mathcal A_n=\{ \epsilon_i-\epsilon_j \mid 1\leq i \neq j \leq n+1\} \ . 
\]
The natural simple roots are  $
\alpha_1=\epsilon_1-\epsilon_2,  \ \dots \ ,  \  \alpha_n=\epsilon_n-\epsilon_{n+1}$,  
and the  highest root can be expressed as 
$\overline{\alpha}=\alpha_1+\ldots+\alpha_n  = \epsilon_{n+1}-\epsilon_1=:-\alpha_{n+1}$.  The notation $\alpha_{n+1}=-\bar\alpha$ will also be used in the subsequent cases. 
\hip 
The extended Dynkin diagram is given below, where each node is indexed by the corresponding root.   
$$\begin{tikzpicture}[
every edge/.style = {draw=black},
 vrtx/.style args = {#1/#2}{% 
      circle, draw,
      minimum size=1mm, label=#1:#2}
                    ]
\tikzstyle{every node}=[font=\scriptsize]
\node(A) [vrtx=right/$\alpha_{n+1}$,scale=0.22,fill=black] at (0, -0.3) {};
\node(B) [vrtx=below/$\alpha_1$,scale=0.22,fill=black] at (-1, -1) {};
\node(C) [vrtx=below/$\alpha_2$,scale=0.22,fill=black] at (-0.5,-1) {};
\node(D) [vrtx=below/$\alpha_{n-1}$,scale=0.22,fill=black] at (0.5,-1) {};
\node(E) [vrtx=below/$\ \ \alpha_n$,scale=0.22,fill=black] at (1,-1) {};
\path   (A) edge (B)
        (B) edge (C)
 (E) edge (A)
% (D) edge (C)
 (E) edge (0.3,-1)
% (D) edge (0.2,-1)
;
\draw (-0.3,-1) -- (0.3,-1) [dotted];
\draw (1,-1) -- (0.8,-1); 
\draw (-0.5,-1) -- (-0.3,-1); 
\end{tikzpicture}$$
\noindent 
The fundamental weights of $A_n$ are given by 
\[
w_i=\epsilon_1+\ldots+\epsilon_i-\frac{i}{n+1}\sum_{j=1}^{n+1}\epsilon_j  \ \text{ for } \ 1\leq i \leq n \ . 
\]

\hip 
In view of \eqref{sisj}, the quantities $s_{ij}$ involved in Lemma \ref{thmPrk} for a component of  type $\widetilde A_n$ are given by 
\[
s_{ij}= \frac{(j-i)(n+1-(j-i))}{2(n+1)} \ . 
\]

\hip 
Note that for the proof of Theorem \ref{thmN9}, these quantities are needed  for $n\leq 8$ only.

\hip

\subsection{The root system $ D_n$} 

Let $V=\mathbb R^n$.  
The  $D_n$-root system is the set of vectors in $V$ of length $\sqrt2$ with integer coordinates given by 
 \[
\mathcal D_n=\{ \epsilon_i-\epsilon_j \mid 1\leq i \neq j \leq n\}\cup \{ \pm(\epsilon_i+\epsilon_j)  \mid 1 \leq i<j\leq n \}\ . 
\]
The natural simple roots are given by $\alpha_1=\epsilon_1-\epsilon_2,  \dots,  \alpha_{n-1}=\epsilon_{n-1}-\epsilon_{n} \ , \ \alpha_{n}$,
%=\epsilon_{n-1}+\epsilon_{n}$
and the  highest root  satisfies  
$
\overline{\alpha}=\alpha_1+2\alpha_2+\ldots+2\alpha_{n-2}+\alpha_{n-1}+\alpha_n = \epsilon_1+ \epsilon_2 \ 
$. 

\hip 
The extended Dynkin diagram is given below, where the nodes are indexed by the roots.  
$$\begin{tikzpicture}[
every edge/.style = {draw=black},
 vrtx/.style args = {#1/#2}{% 
      circle, draw,
      minimum size=1mm, label=#1:#2}
                    ]
\tikzstyle{every node}=[font=\scriptsize]
\node(A) [vrtx=left/$\alpha_{1}$,scale=0.22, fill=black] at (0, 1) {};
\node(B) [vrtx=left/$\alpha_{n+1}$,scale=0.22,fill=black] at (0, 0) {};
\node(C) [vrtx=below/$\alpha_2$,scale=0.22,fill=black] at (0.5,0.5) {};
%\node(D) [vrtx=above/$e_3$,scale=0.22,fill=black] at (1,0.5) {};
%\node(E) [vrtx=above/$e_4$,scale=0.22,fill=black] at (1.5,0.5) {};
\node(F) [vrtx=below/$\alpha_{n-2} \ \ $,scale=0.22,fill=black] at (2.5,0.5) {};
\node(G) [vrtx=right/$\alpha_{n-1}$,scale=0.22,fill=black] at (3, 1) {};
\node(H) [vrtx=right/$\alpha_{n}$,scale=0.22,fill=black] at (3, 0) {};

\path   (A) edge (C)
        (B) edge (C)
% (C) edge (D)  (D) edge (E)
 (F) edge (G)
 (F) edge (H);

\draw (C) -- (0.8,0.5) ; 
\draw (2.2,0.5) -- (0.8,0.5) [dotted];
\draw (2.2,0.5) -- (F) ;
\end{tikzpicture}
$$

\noindent The fundamental weights are given by
\[
\left\{
\begin{array}{ll}
w_i&=\epsilon_1+\ldots+ \epsilon_i \  \text{ for } i<n-1 \ ,  \\ 
w_{n-1}&=\frac{1}{2}( \epsilon_1+\ldots+ \epsilon_{n-1}-  \epsilon_n) \ ,  \\ 
w_n&=\frac{1}{2}( \epsilon_1+\ldots+ \epsilon_{n-1}+  \epsilon_n )\ . 
\end{array}
\right.
\]

\hip The  quantities $s_{ij}:=s_is_j$ defined in  \eqref{sisj} are given below. 
\[
\left\{ 
\begin{array}{llll}
s_is_j=\frac{j-i}{2} & \text{ for } { 2\leq i\leq j \leq n-2}  \\
s_{n+1}s_j=s_1s_j=\frac{j}{4} & \text{ for }  { j\leq n-2} \\ 
s_1s_{n-1}=s_1s_n=s_{n-1}s_{n+1}=s_ns_{n+1}=\frac{n}{8} \\
s_{n-1}s_n=s_1s_{n+1}=\frac1{2} ; s_js_{n-1}=s_js_n=\frac{n-j}{4} & \text{ for }  { j\leq n-2}
\end{array}
\right.
\]

\hip Note that for the proof of Theorem \ref{thmN9},  these quantities are needed only for $n\leq 6$.  

\subsection{The root systems $ E_8,  E_7$ and $ E_6$}

$\bullet$ 
Let $V=\mathbb R^8$, and consider the lattice $L=L_0+\mathbb Z(\frac{1}{2}\sum\limits_{1\leq i \leq 8}\epsilon_i)$ where $L_0$ consists of all vectors $\sum\limits_{1\leq i \leq 8} c_i\epsilon_i$ with $c_i\in \mathbb Z$ and $\sum\limits_{1\leq i \leq 8} c_i$ even.  
\hip 
The   $E_8$-root system is the set of vectors of length $\sqrt2$  in $L$, with 
 the following natural simple roots  
\begin{equation}
\left\{
\begin{array}{ll}
\alpha_1&=\frac{1}{2}(\epsilon_1-\epsilon_2 -\dots -\epsilon_7 + \epsilon_8) \\ 
\alpha_2&= \epsilon_1 + \epsilon_2  \\ 
\alpha_i&= \epsilon_{i-1}-\epsilon_{i-2} \ \text{ for } 3\leq i \leq 8 
\end{array}
\right.  
\label{rootsE8}
\end{equation}
and the highest root can be expressed as $$
\overline{\alpha}=2\alpha_1+3\alpha_2+4\alpha_3+6\alpha_4+5\alpha_5+4\alpha_6+3\alpha_7+2\alpha_8 \ . $$ 
% = \epsilon_7 + \epsilon_8$. 

\hip
The extended Dynkin diagram is given below. 
$$
\begin{tikzpicture}[
every edge/.style = {draw=black,},
 vrtx/.style args = {#1/#2}{% 
      circle, draw,
      minimum size=1mm, label=#1:#2}
                    ]
\tikzstyle{every node}=[font=\scriptsize]
\node(A) [vrtx=above/$\alpha_{3}$,scale=0.22, fill=black] at (0, 0) {};
\node(B) [vrtx=above/$\alpha_{4}$,scale=0.22, fill=black] at (0.5, 0) {};
\node(C) [vrtx=above/$\alpha_5$,scale=0.22, fill=black]at (1,0) {};
\node(D) [vrtx=above/$\alpha_6$,scale=0.22, fill=black] at (1.5,0) {};
\node(E) [vrtx=above/$\alpha_7$,scale=0.22, fill=black] at (2,0) {};
\node(F) [vrtx=right/$\alpha_{2}$,scale=0.22, fill=black] at (0.5,-0.5) {};
\node(G) [vrtx=above/$\alpha_{8}$,scale=0.22, fill=black] at (2.5, 0) {};
\node(H) [vrtx=above/$\alpha_{9}$,scale=0.22, fill=black] at (3, 0) {};
\node(K) [vrtx=above/$\alpha_{1}$,scale=0.22, fill=black] at (-0.5, 0) {};
\path   (A) edge (B)
   (A) edge (K)
        (B) edge (C)
 (C) edge (D)
 (D) edge (E)
 (G) edge (E)
 (F) edge (B)
 (H) edge (G);

\end{tikzpicture}$$
The fundamental weights are given by 
\[
\left\{
\begin{array}{ll}
w_1=2\epsilon_8   %4\alpha_1 + 5\alpha_2 + 7 \alpha_3 + 10 \alpha_4 + 8 \alpha_5 + 6\alpha_6 + 4 \alpha_7 + 2 \alpha_8
\\ 
w_2= \frac{1}{2}(\epsilon_1+\epsilon_2+\epsilon_3+\epsilon_4+\epsilon_5+\epsilon_6+\epsilon_7+5\epsilon_8) %5\alpha_1 + 8\alpha_2 + 10 \alpha_3 + 15 \alpha_4 + 12 \alpha_5 + 9 \alpha_6 + 6  \alpha_7 +3  \alpha_8
\\  
w_3= \frac{1}{2}(-\epsilon_1+\epsilon_2+\epsilon_3+\epsilon_4+\epsilon_5+\epsilon_6+\epsilon_7+7\epsilon_8) % 7 \alpha_1 + 10\alpha_2 + 14 \alpha_3 + 20 \alpha_4 + 16 \alpha_5 + 12\alpha_6 + 8 \alpha_7 + 4 \alpha_8
\\   
w_4= \epsilon_3+\epsilon_4+\epsilon_5+\epsilon_6+\epsilon_7+5\epsilon_8 %   10  \alpha_1 +15 \alpha_2 +20  \alpha_3 + 30 \alpha_4 +  24\alpha_5 +18 \alpha_6 +  12\alpha_7 +  6\alpha_8
\\   
w_5=  \epsilon_4+\epsilon_5+\epsilon_6+\epsilon_7+4\epsilon_8% 8 \alpha_1 +12 \alpha_2 +16  \alpha_3 + 24 \alpha_4 +20  \alpha_5 +15 \alpha_6 +  10\alpha_7 + 5 \alpha_8
\\   
w_6= \epsilon_5+\epsilon_6+\epsilon_7+3\epsilon_8    % %6\alpha_1 +9 \alpha_2 + 12 \alpha_3 +18  \alpha_4 + 15 \alpha_5 +12 \alpha_6 + 8 \alpha_7 +4  \alpha_8
\\   
w_7= \epsilon_6+\epsilon_7+2\epsilon_8  % 4 \alpha_1 +6 \alpha_2 +8  \alpha_3 +12  \alpha_4 + 10 \alpha_5 + 8\alpha_6 +  6\alpha_7 + 3 \alpha_8
\\   
w_8= \epsilon_7+\epsilon_8%5 \alpha_1 +8 \alpha_2 +10  \alpha_3 + 15 \alpha_4 +12  \alpha_5 +9 \alpha_6 + 6 \alpha_7 + 3 \alpha_8
\\   
\end{array}
\right.  
\]

\hip and the quantities $s_{ij}:=s_is_j$ defined in  \eqref{sisj} are listed in the following table.

$$
 \setlength\extrarowheight{-2pt}
\addtolength{\tabcolsep}{-2pt}
\begin{tabular}{|c|c |c |c |c |c |c | c | c | c}
\hline
& $s_2$ & $s_3$ & $s_4$& $s_5$& $s_6$ &  $s_7$&  $s_8$&  $s_9$  \\
\hline
$s_1$ & $\nicefrac2{3}$ & $ \nicefrac{1}{2}$ & $1$ & $1 $& $1$ & $1$ &$1$ & $1$ \\
\hline
$s_2$ && $\nicefrac{7}{12}$ & $ \nicefrac1{2}$ & $ \nicefrac2{3}$ & $ \nicefrac{5}{6}$ & $1$ & $ \nicefrac7{6}$ &$\nicefrac4{3}$\\
\hline
$s_3$ & & & $\nicefrac{1}{2}$ & $\nicefrac{3}{4}$ & $ 1$ & $ \nicefrac{5}{4}$ & $\nicefrac3{2}$& $ \nicefrac{7}{4}$ \\
\hline
$s_4$ & & & & $\nicefrac1{2}$ & $1$ & $\nicefrac{3}{2}$& $2$& $\nicefrac{5}{2}$ \\
\hline
$s_5$ & & & & & $\nicefrac1{2}$ & $1$ & $\nicefrac3{2}$ & $2$ \\
\hline
$s_6$ && & & & & $\nicefrac1{2}$ & $1$& $\nicefrac3{2}$  \\
\hline
$s_7$ && & & & &  & $\nicefrac1{2}$ & 1 \\
\hline
$s_8$ && & & & & & & $\nicefrac1{2}$  \\
\hline
\end{tabular}$$

\hop

\noindent  $\bullet$ 
 Let $V$ be the hyperplane in $\mathbb R^8$ generated by the first seven simple roots  $\alpha_1,\dots, \alpha_7$ of $\mathcal E_8$. 
Then, $V$ is orthogonal to $w_8$. 

\hip The $E_7$-root system is defined as  $\mathcal E_7= \mathcal E_8\cap V$.
It has natural simple roots $\alpha_1, \dots, \alpha_7$; see \eqref{rootsE8}. 
The highest root is given by 
\[
\overline{\alpha}=2\alpha_1+2\alpha_2+3\alpha_3+4\alpha_4+3\alpha_5+2\alpha_6+\alpha_7 = \epsilon_8 - \epsilon_7\ ,  
\]
and the extended Dynkin diagram is given as follows. 
\vspace{-1em}
$$\begin{tikzpicture}[
every edge/.style = {draw=black,},
 vrtx/.style args = {#1/#2}{% 
      circle, draw,
      minimum size=1mm, label=#1:#2}
                    ]
\tikzstyle{every node}=[font=\scriptsize]
\node(A) [vrtx=above/$\alpha_{1}$,scale=0.22, fill=black] at (0, 0) {};
\node(B) [vrtx=above/$\alpha_{3}$,scale=0.22,fill=black] at (0.5, 0) {};
\node(C) [vrtx=above/$\alpha_4$,scale=0.22,fill=black] at (1,0) {};
\node(D) [vrtx=above/$\alpha_5$,scale=0.22,fill=black] at (1.5,0) {};
\node(E) [vrtx=above/$\alpha_6$,scale=0.22,fill=black] at (2,0) {};
\node(F) [vrtx=right/$\alpha_{2}$,scale=0.22,fill=black] at (1,-0.5) {};
\node(G) [vrtx=above/$\alpha_{7}$,scale=0.22,fill=black] at (2.5, 0) {};
\node(H) [vrtx=above/$\alpha_{8}$,scale=0.22,fill=black] at (-0.5, 0) {};
\path   (A) edge (B)
        (B) edge (C)
 (C) edge (D)
 (D) edge (E)
 (F) edge (C)

 (E) edge (G)
 (A) edge (H);
\end{tikzpicture}
$$

\vspace{-2em}

\noindent
The fundamental weights are 
\[
\left\{
\begin{array}{ll}
w_1= -\epsilon_7+\epsilon_8 
%2 \alpha_1 + 2\alpha_2 +3  \alpha_3 + 4 \alpha_4 + 3 \alpha_5 +2 \alpha_6 +  \alpha_7
\\   
w_2=  \frac{1}{2}(\epsilon_1+\epsilon_2+\epsilon_3+\epsilon_4+\epsilon_5+\epsilon_6-2\epsilon_7+2\epsilon_8) % 
%\frac{1}{2}(4 \alpha_1 + 7\alpha_2 +  8\alpha_3 +12  \alpha_4 +9  \alpha_5 +8 \alpha_6 + 3 \alpha_7)
\\   
w_3= \frac{1}{2}(- \epsilon_1+\epsilon_2+\epsilon_3+\epsilon_4+\epsilon_5+\epsilon_6-3\epsilon_7+3\epsilon_8) 
% 3 \alpha_1 + 4\alpha_2 +  6\alpha_3 + 8 \alpha_4 + 6 \alpha_5 + 4\alpha_6 + 2 \alpha_7
\\   
w_4=\epsilon_3+\epsilon_4+\epsilon_5+\epsilon_6-2\epsilon_7+2\epsilon_8
%4 \alpha_1 +6 \alpha_2 +8  \alpha_3 + 12 \alpha_4 + 9 \alpha_5 +6 \alpha_6 +3  \alpha_7 
\\ 
w_5= \epsilon_4+\epsilon_5+\epsilon_6-\frac3{2}\epsilon_7+\frac3{2}\epsilon_8
% \frac{1}{2}(6\alpha_1 +9 \alpha_2 + 12 \alpha_3 + 18 \alpha_4 +15  \alpha_5 +10 \alpha_6 + 5 \alpha_7)
\\  
w_6= \epsilon_5+\epsilon_6-\epsilon_7+\epsilon_8
% 2\alpha_1 +3 \alpha_2 +4  \alpha_3 +6  \alpha_4 +5  \alpha_5 +4 \alpha_6 +2  \alpha_7 
\\   
w_7= \epsilon_6-\frac1{2}\epsilon_7+\frac1{2}\epsilon_8
%\frac{1}{2}(2 \alpha_1 +3 \alpha_2 + 4 \alpha_3 +6  \alpha_4 + 5 \alpha_5 +4 \alpha_6 + 3 \alpha_7)
\\  
\end{array}
\right.  
\]
\hip

\hip The quantities $s_{ij}:=s_is_j$ defined in \eqref{sisj} are given in the following table. 

$$
 \setlength\extrarowheight{-2pt}
\addtolength{\tabcolsep}{-2pt}
 \begin{tabular}{|c|c |c |c |c |c |c | c | c}
\hline
& $s_2$ & $s_3$ & $s_4$& $s_5$& $s_6$ &  $s_7$&  $s_8$  \\
\hline
$s_1$ & $\nicefrac{3}{4}$ & $ \nicefrac{1}{2}$ & 1 & 1 & 1 & 1 &$ \nicefrac1{2}$ \\
\hline
$s_2$ && $\nicefrac{5}{8}$ & $ \nicefrac1{2}$ & $ \nicefrac5{8}$ & $ \nicefrac{3}{4}$ & $ \nicefrac{7}{8}$ & $ \nicefrac7{8} $\\
\hline
$s_3$ & & & $\nicefrac{1}{3}$ & $\nicefrac{3}{4}$ & $ 1$ & $ \nicefrac{5}{4}$ & $1$ \\
\hline
$s_4$ & & & & $\nicefrac1{2}$ & $1$ & $\nicefrac{3}{2}$& $\nicefrac{3}{2}$ \\
\hline
$s_5$ & & & & & $\nicefrac1{2}$ & $1$ & $\nicefrac5{4}$  \\
\hline
$s_6$ && & & & & $\nicefrac1{2}$ & $1$  \\
\hline
$s_7$ && & & & &  & $\nicefrac3{4}$  \\
\hline
\end{tabular}$$

%\begin{definition}\normalfont Let $V$ be the hyperplane generated by $\alpha_1,\dots, \alpha_6$ in $\mathbb R^8$.  Then, $V$ is orthogonal to $w_7$ and $w_8$.   
\hop

\noindent $\bullet$ 
Let $V$ be the hyperplane generated by $\alpha_1,\dots, \alpha_6$ in $\mathbb R^8$.  Then, $V$ is orthogonal to $w_7$ and $w_8$.   
The $E_6$-root system is defined as  $\mathcal E_6=\mathcal E_8 \cap V$ and has the natural simple roots $\alpha_1, \dots, \alpha_6$; see \eqref{rootsE8}. The highest root is given by 
\[
\overline{\alpha}=\alpha_1+2\alpha_2+2\alpha_3+4\alpha_4+2\alpha_5+\alpha_6 \ . 
\]
\hip 
The extended Dynkin diagram is given below. 
$$\begin{tikzpicture}[
every edge/.style = {draw=black,},
 vrtx/.style args = {#1/#2}{% 
      circle, draw,
      minimum size=1mm, label=#1:#2}
                    ]
\tikzstyle{every node}=[font=\scriptsize]
\node(A) [vrtx=above/$\alpha_{1}$,scale=0.22,fill=black] at (0, 0) {};
\node(B) [vrtx=above/$\alpha_{3}$,scale=0.22,fill=black] at (0.5, 0) {};
\node(C) [vrtx=above/$\alpha_4$,scale=0.22,fill=black] at (1,0) {};
\node(D) [vrtx=above/$\alpha_5$,scale=0.22,fill=black] at (1.5,0) {};
\node(E) [vrtx=above/$\alpha_6$,scale=0.22,fill=black] at (2,0) {};
\node(F) [vrtx=right/$\alpha_{2}$,scale=0.22,fill=black] at (1,-0.5) {};
\node(G) [vrtx=right/$\alpha_{7}$,scale=0.22,fill=black] at (1, -1) {};

\path   (A) edge (B)
        (B) edge (C)
 (C) edge (D)
 (D) edge (E)
 (F) edge (C)
 (F) edge (G);

\end{tikzpicture}
$$ 

\hip 
The fundamental weights are given as follows. 
\[
\left\{
\begin{array}{ll}
w_1=  \frac{2}{3}(-\epsilon_6-\epsilon_7+\epsilon_8) 
%\frac{1}{3}(4 \alpha_1 + 3\alpha_2 +5  \alpha_3 + 6 \alpha_4 + 4 \alpha_5 +2 \alpha_6)
\\   
w_2=  \frac{1}{2}(\epsilon_1+\epsilon_2+\epsilon_3+\epsilon_4+\epsilon_5-\epsilon_6-\epsilon_7+\epsilon_8) 
% \alpha_1 + 2\alpha_2 +  2\alpha_3 +3  \alpha_4 +2  \alpha_5 + \alpha_6 = \bar\alpha 
\\   
w_3=   \frac{1}{2}(-\epsilon_1+\epsilon_2+\epsilon_3+\epsilon_4+\epsilon_5)  + \frac{5}{6}(-\epsilon_6-\epsilon_7+\epsilon_8) 
% \frac{1}{3}( 5\alpha_1 + 6\alpha_2 +  10\alpha_3 + 12 \alpha_4 + 8\alpha_5 + 4\alpha_6)
\\   
w_4= \epsilon_3+\epsilon_4+\epsilon_5-\epsilon_6-\epsilon_7+\epsilon_8
%2 \alpha_1 +3 \alpha_2 +4  \alpha_3 + 6 \alpha_4 + 4 \alpha_5 +2 \alpha_6 
\\ 
w_5= \epsilon_4+\epsilon_5 +  \frac{2}{3}(-\epsilon_6-\epsilon_7+\epsilon_8) 
% \frac{1}{3}(4\alpha_1 +6 \alpha_2 + 8 \alpha_3 + 12 \alpha_4 +10  \alpha_5 +5\alpha_6
\\  
w_6=\epsilon_5 + \frac{1}{3}(-\epsilon_6-\epsilon_7+\epsilon_8) 
% \frac{1}{3}( 2\alpha_1 +3 \alpha_2 +4  \alpha_3 +6  \alpha_4 +5  \alpha_5 +4 \alpha_6 
\\   
\end{array}
\right.  
\]

\hip The quantities $s_{ij}:=s_is_j$ defined in \eqref{sisj} are given in the following table. 

\begin{center}
\addtolength{\tabcolsep}{-2pt}
 \setlength\extrarowheight{-2pt}
\begin{tabular}{|c |c |c |c |c |c | c|}
\hline
& $s_2$ & $s_3$ & $s_4$& $s_5$& $s_6$ &  $s_7$  \\
\hline
$s_1$ & $\nicefrac{5}{6}$ & $\nicefrac{1}{2}$ & $1$ & $  \nicefrac{5}{6} $ & $ \nicefrac{2}{3}$ &$ \nicefrac{2}{3}$ \\
\hline
$s_2$ & & $ \nicefrac{2}{3}$ & $ \nicefrac1{2}$ & $ \nicefrac{2}{3}$ & $ \nicefrac{5}{6}$ & $ \nicefrac1{2} $\\
\hline
$s_3$ & & & $ \nicefrac1{2}$ & $ \nicefrac{2}{3}$ & $ \nicefrac{5}{6}$ & $ \nicefrac{5}{6}$ \\
\hline
$s_4$ & & & & $\nicefrac1{2}$ & $1$ & $1$ \\
\hline
$s_5$ & & & &  & $\nicefrac1{2}$ & $\nicefrac5{6}$  \\
\hline
$s_6$ && & & &  & $\nicefrac1{3}$  \\
\hline
\end{tabular}
\end{center}

  \hip 
\subsection{ The root system $\widetilde G_2$} 
\label{AppendixB.4}

Let $V$ be the hyperplane in $\mathbb R^3$ consisting of all vectors whose coordinates add up to $0$. 
The $G_2$-root system  is  the set of vectors in $V$ with integer coordinates and of length  $\sqrt2$ or $\sqrt6$
 given by 
 \[
\mathcal G_2=\{ \pm(\epsilon_i-\epsilon_j) \mid  { 1\leq i < j \leq 3}\}\cup \{ \pm(2\epsilon_i-\epsilon_j-\epsilon_k)  \mid { 1 \leq i, j,k \leq 3 }\}\ . 
\]
It has natural simple roots given by 
$\alpha_1=\epsilon_1-\epsilon_2$ and $\alpha_2=-2\epsilon_{1}+\epsilon_2+\epsilon_3$.  The highest root can be written  according to  $
\overline{\alpha}=3\alpha_1+2\alpha_2 = -\epsilon_1 - \epsilon_2 +2\epsilon_3\ $. 

\hip The extended Dynkin diagram is given below. 

$$
\begin{tikzpicture}
[
every edge/.style = {draw=black},
every vrtx/.style = {fill=black},
 vrtx/.style args = {#1/#2}{% 
      circle, draw,
      minimum size=1mm, label=#1:#2}
                    ]
\tikzstyle{every node}=[font=\scriptsize]
\node(A) [fill=black,vrtx=below/$\alpha_{1}$,scale=0.22] at (0, 0) {};
\node(B) [fill=black,vrtx=below/$\alpha_2$,scale=0.22] at (1,0) {};
\node(C) [fill=black,vrtx=below/$\alpha_3$,scale=0.22] at (2,0) {};
\draw   (A) -- (B) node[midway,above] {$6$}; 
\path        (B) edge (C); 
\end{tikzpicture}
$$
\noindent 
The fundamental weights are given by 
$w_1= 2\alpha_1+\alpha_2$ and $w_2=3\alpha_1+2\alpha_2$.  
%\[\left\{\begin{array}{ll}w_1= 2\alpha_1+\alpha_2 \\ w_2=3\alpha_1+2\alpha_2\end{array}\right.  \ . \]
%\noindent Furthermore, $c_1=3, c_2=2$ and $c_3=1$, and the fundamental weights for $G_2$ are given by 

\hip   In this case, from  \eqref{sisj},  we compute  
\[
s_{12}= \frac{13}{6} \ ,  \  s_{23}= \frac1{3} \  \text{  and } \ s_{13} =\frac{3}{2} \ .
\]

 \newpage 

\section{-- Admissible pairs in higher dimensions} 
\label{AppendixC}

All the admissible pairs for the cases that are not 
detailled throughout Section  \ref{Section4.2} are listed in the following tables.  
For each pair $\{x,y_i\}$, we  indicate the Lorentzian product $\langle x,y_i\rangle$, and, 
when relevant, we also provide the Lorentzian products $\langle y_i,y_j\rangle$; see Section \ref{Section4.1.4}. 
 
\bigskip

\begin{table*}[!h]
\footnotesize
\begin{minipage}[t]{\textwidth}
\centering
% [inline block 1: 29 envs, 38377 chars -> data_tex | \begin{tabular}{|c|c|c|}\hline   $x$ & $y_i$  & $\langle x, y_i \rangle$ \\ ...]
 
}\\
\hline 
\end{tabular}
\caption{Admissible pairs for  $\sigma_{\infty}=\widetilde G_2\cup \widetilde E_8 \cup \widetilde E_8 $}% and where   \\ $x\leftrightarrow(0,0,a;0,0,0,0,0,0,0,0,1;0,0,0,0,0,0,0,0,1)$}
\label{pairs19}
\end{minipage}
\end{table*}

\clearpage

\noindent
{\small
 \textsc{Department of Mathematics, University of Luxembourg, L-4364 Esch-sur-Alzette, Luxembourg.} \\
  \textit{E-mail address}, \texttt{naomi.bredon@uni.lu}
}

\end{document}